\newtheorem{thm}{Theorem}[section]
\newtheorem{prop}[thm]{Proposition}
\newtheorem{cor}[thm]{Corollary}
\newtheorem{lem}[thm]{Lemma}
\theoremstyle{remark}
\newtheorem{remark}[thm]{Remark}
\theoremstyle{definition}
\newcommand*\isom{\xrightarrow{\sim}}
\renewcommand*\env@matrix[1][*\c@MaxMatrixCols c]{%
  \hskip -\arraycolsep
  \let\@ifnextchar\new@ifnextchar
  \array{#1}}
\newcommand{\pair}[1]{\langle#1\rangle}
\newcommand{\divisor}{\operatorname{div}}
\newcommand{\ord}{\operatorname{ord}}
\newcommand{\Pic}{\operatorname{Pic}}
\newcommand{\rmH}{H}
\newcommand{\rank}{\operatorname{rank}}
\newcommand{\R}{R}
\newcommand{\Ker}{\operatorname{Ker}}
\newcommand{\Lie}{\operatorname{Lie}}
\newcommand{\Spec}{\operatorname{Spec}}
\newcommand{\Sp}{\operatorname{Sp}}
\newcommand{\spann}{\operatorname{span}}
\newcommand{\V}{V}
\newcommand{\E}{E}
\newcommand{\Jac}{\operatorname{Jac}}
\newcommand{\Sing}{\operatorname{Sing}}
\newcommand{\ceeone}{c_1}
\renewcommand\Im{\operatorname{Im}}
\def\qq{\mathbb{Q}}
\def\PP{\mathbb{P}}
\def\rr{\mathbb{R}}
\def\zz{\mathbb{Z}}
\def\cc{\mathbb{C}}
\def\mm{\mathcal{M}}
\def\hh{\mathcal{H}}
\def\bb{\mathcal{B}}
\def\QQ{\boldsymbol{Q}}
\def\vv{\boldsymbol{V}}
\def\VV{\mathcal{V}}
\def\ww{{\boldsymbol{W}}}
\def\oo{\mathcal{O}}
\def\pp{\mathcal{P}}
\def\ee{\mathcal{E}}
\def\ff{\mathcal{F}}
\def\nn{\mathbb{N}}
\def\deldelbar{\partial \bar{\partial}}
\def\Ar{\mathrm{Ar}}
\def\omar{\omega_{\Ar}}
\def\opar{\oo(P)_{\Ar}}
\def\oqar{\oo(Q)_{\Ar}}
\def\omsq{\pair{\omar,\omar}}
\def\d{\mathrm{d}}
\def\Lbar{\bar{L}}
\def\Ybar{\bar{Y}}
\def\Xbar{\bar{X}}
\def\Mbar{\bar{M}}
\def\Zbar{\bar{Z}}
\def\sing{\mathrm{sing}}
\def\Sm{\mathrm{Sm}}
\def\can{\mathrm{can}}
\def\vareps{\varepsilon}
\def\eps{\epsilon}
\def\a{a}
\def\Gr{\mathrm{Gr}}
\numberwithin{equation}{section}
\begin{document}

\title[Faltings delta-invariant and semistable degeneration]{Faltings delta-invariant and semistable degeneration}

\author{Robin de Jong}

\begin{abstract} We determine the asymptotic behavior of the Arakelov metric, the Arakelov-Green's function, and the Faltings delta-invariant for arbitrary one-parameter families of complex curves with semistable degeneration. The leading terms in the asymptotics are given a combinatorial interpretation in terms of S.\ Zhang's theory of admissible Green's functions on polarized metrized graphs. 
\end{abstract}

\maketitle

\thispagestyle{empty}

\section{Introduction}

In 1984 G. Faltings \cite{fa} introduced the delta-invariant $\delta_F(C)$ of a compact and connected Riemann surface $C$. The delta-invariant appears as an archimedean contribution in the so-called Noether formula for arithmetic surfaces, which we recall briefly in a simplified form (cf.\ \cite[Section~6]{fa} and \cite{mb} for more general versions). A precise definition of the delta-invariant is given in Section~\ref{sec:metrization} below.

Let $\pi \colon \Xbar \to \Spec \zz$ be a semistable curve of positive genus over the ring of integers. Assume that the total space $\Xbar$ is regular. Let $\bar{\omega}$ denote the relative dualizing sheaf of $\pi$, equipped with the Arakelov canonical metric at the archimedean place. Let $\deg \det \R \pi_* \bar{\omega}$ denote the Faltings height of $\Xbar$, and let $(\bar{\omega},\bar{\omega})_\Ar$ denote the self-intersection, in the Arakelov sense, of $\bar{\omega}$. Then the Noether formula for $\pi$ is the identity
\begin{equation} \label{noether} 12 \deg \det \R \pi_* \bar{\omega} = (\bar{\omega},\bar{\omega})_\Ar + \sum_p \delta_p(\Xbar) \log p +  \delta_F(\Xbar(\cc)) 
\end{equation}
in $\rr$. Here the summation is over the prime numbers $p$, and the non-archimedean invariant $\delta_p(\Xbar)$ denotes the discriminant, i.e., the number of singular points in the geometric fiber, at the prime $p$ of $\zz$. 

The Noether formula suggests that the delta-invariant $\delta_F(\Xbar(\cc))$ should be interpreted as an archimedean analogue of the discriminant $\delta_p(\Xbar)$. Phrased a little differently, this means that one should think of the delta-invariant $\delta_F(C)$ of a compact connected Riemann surface $C$ as the minus logarithm of a suitable ``distance'' from the moduli point $[C]$ in the moduli space of curves $\mm_h(\cc)$ towards the boundary of $\mm_h(\cc)$ in Deligne-Mumford's compactification $\overline{\mm}_h(\cc)$, parametrizing stable curves. 

In view of this interpretation, Faltings raised in \cite{fa} the question to determine the asymptotic behavior of his delta-invariant towards the boundary in $\overline{\mm}_h(\cc)$. Around 1990 J. Jorgenson \cite{jo} and R. Wentworth \cite{we} independently were able to describe the asymptotic behavior of the delta-invariant in a one-parameter family of curves degenerating into a stable curve with precisely one node. They also answered the question as to the asymptotic behavior of the intimately related  Arakelov canonical metric and Arakelov-Green's function. Note that the results of Jorgenson and Wentworth correspond to considering holomorphic arcs in $\overline{\mm}_h(\cc)$ passing through a generic point of the boundary. This leaves the question  as to what one can say in cases where the special fiber develops more than one node, i.e., what happens near points in codimension $\geq 2$ strata of $\overline{\mm}_h(\cc)$. 

The purpose of the present paper is to answer this question. We give a full description of the asymptotic behavior of the delta-invariant, the Arakelov metric, and the Arakelov-Green's function in cases of arbitrary semistable degenerations over the disc $\Delta$. Our results yield those of Jorgenson and Wentworth as a special case, and furnish an alternative approach to them. In our setup, the leading coefficients of the asymptotics that we find are interpreted in terms of S. Zhang's theory \cite{zh} of admissible Green's functions on polarized weighted graphs, and thus come with a precise combinatorial interpretation. We recall Zhang's theory extensively in the main text below. 

Let $\pi \colon \Xbar \to \Delta$ be a semistable curve of positive genus over the unit disc $\Delta$. Assume that $\pi$ is smooth over $\Delta^*$, and write $X=\pi^{-1}\Delta^*$. To the special fiber $\Xbar_0$ one has canonically associated a polarized weighted graph $G$, by taking the dual complex of $\Xbar_0$ and remembering the arithmetic genera of the irreducible components of $\Xbar_0$, as well as the local multiplicities of the singularities of $\Xbar_0$ on the total space $\Xbar$. For two real-valued continuous functions $\phi_1$, $\phi_2$ on $\Delta^*$ we write $\phi_1 \sim \phi_2$ if the difference $\phi_1 - \phi_2$ extends as a continuous function over $\Delta$. Our result on the asymptotics of the delta-invariant is then as follows.
\begin{thm} \label{delta}  
Let $\delta$ be the volume of the polarized weighted graph $G$, and let $\vareps$ be Zhang's epsilon-invariant (\ref{defineeps}) of $G$.  Let $\Omega(t)$ be the family of normalized period matrices on $\Delta^*$ determined by a symplectic framing of $R^1\pi_*\zz_X$. Then the Faltings delta-invariant has asymptotics
\begin{equation} \label{asymptdelta} \delta_F(X_t) \sim -(\delta+\vareps) \log|t| - 6 \log \det \Im \Omega(t) 
\end{equation}
as $t \to 0$. 
\end{thm}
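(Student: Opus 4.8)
The plan is to derive the asymptotics~(\ref{asymptdelta}) from the analytic incarnation of the Noether formula~(\ref{noether}), splitting $\delta_F(X_t)$ into a Hodge-theoretic part that accounts for the $\det\Im\Omega(t)$-term and an Arakelov-geometric part that accounts for the $\log|t|$-term, and interpreting the latter through Zhang's theory. After passing, if necessary, to a regular semistable model over $\Delta$ (which leaves $X_t$, hence $\delta_F(X_t)$, unchanged; blowing up replaces a singular point of multiplicity $n$ of $\Xbar$ by a chain of $n-1$ rational curves in the special fibre, which is where the edge lengths of $G$ enter, the number of nodes of the new model being the volume $\delta$ of $G$), one has over $\Delta^*$ the Mumford isomorphism $\lambda^{\otimes 12}\isom\omsq$, where $\lambda=\det\R\pi_*\omar$ carries the Faltings ($L^2$) metric and the Deligne pairing $\omsq$ carries the metric built from the family of Arakelov--Green's functions; by work of Faltings and Moret-Bailly this isomorphism is an isometry up to the factor $\exp(\delta_F(X_t))$ and a constant depending only on the genus. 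Matching a frame $\sigma$ of the canonical extension of $\lambda$ over $\Delta$ with a frame $\tau$ of that of $\omsq$ through the isomorphism --- the mismatch over $\Delta$ being governed by the relative Noether relation $12\lambda=\omsq+\delta\,[0]$ in $\Pic(\Delta)$ --- one obtains, modulo $\sim$, an identity of the shape
\begin{equation} \label{reduction}
\delta_F(X_t)\;\sim\;\log\|\tau\|_{\Ar}\;-\;12\log\|\sigma\|_{F}\;-\;\delta\log|t|,
\end{equation}
and it remains to analyse the first two terms.

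For the Faltings term, trivialise $\lambda$ over $\Delta^*$ by $\sigma=s_1\wedge\dots\wedge s_g$, where $s_1,\dots,s_g$ are the differentials normalised with respect to the chosen symplectic framing; then $\|\sigma\|_{L^2}^2=\det\Im\Omega(t)$ identically. Since the degeneration is semistable the local monodromy is unipotent, and the nilpotent orbit theorem --- equivalently, the identification of the canonical extension of the Hodge bundle with $\pi_*\omega_{\Xbar/\Delta}$ for a semistable family --- shows that $\sigma$ is, up to a nowhere-vanishing holomorphic factor, a frame of the canonical extension of $\lambda$. Hence $12\log\|\sigma\|_{F}-6\log\det\Im\Omega(t)$ extends continuously over $\Delta$, so this term contributes precisely $-6\log\det\Im\Omega(t)$.

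The Arakelov term $\log\|\tau\|_{\Ar}$ is the analytic heart of the argument. One first needs the asymptotics, as $t\to 0$, of the family of Arakelov--Green's functions $g_t$ on $X_t$ and of the Arakelov metric on $\omar|_{X_t}$; the metric on $\omsq$ then follows through its definition, the pairing being computed from $g_t$ evaluated at the points of a canonical divisor together with the Arakelov metric of $\omar$ at those points. I would proceed in three steps: (i) show that the Arakelov volume form $\mu_{\Ar,t}$, a probability measure on $X_t$, converges weakly, under the retraction of $X_t$ onto the dual graph $G$, to Zhang's canonical admissible measure $\mu_G$ on $G$; (ii) deduce that $g_t$ converges to Zhang's admissible Green's function $g_{\mu_G}$ on $G$ --- uniformly on the thick part via the retraction, and, after rescaling the pinching necks (each of conformal modulus $\sim\frac{1}{2\pi}\log|t|^{-1}$, with relative lengths the multiplicities), along the necks --- by passing to the limit the defining properties of $g_t$, namely $\mathrm{d}\mathrm{d}^c\,g_t(P,\cdot)=\delta_P-\mu_{\Ar,t}$ and $\int_{X_t}g_t(P,\cdot)\,\mu_{\Ar,t}=0$; and (iii) control the on-diagonal (Robin-constant) asymptotics of $g_t$, which define the Arakelov metric on $\omar$, in terms of the edge to which a point retracts. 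Feeding this into the formula for $\omsq$ and invoking Zhang's combinatorial identities for the admissible self-intersection of the dualizing sheaf on a polarized metrized graph, the $\log|t|^{-1}$-linear coefficient of $\log\|\tau\|_{\Ar}$ comes out as $-\vareps$; this is the archimedean counterpart of the relation $(\bar{\omega},\bar{\omega})_{\Ar}=(\bar{\omega},\bar{\omega})_{a}+\sum_p\vareps(G_p)\log p$ underlying the Noether formula. Combined with~(\ref{reduction}) and the Faltings term this gives $\delta_F(X_t)\sim-\vareps\log|t|-6\log\det\Im\Omega(t)-\delta\log|t|$, which is~(\ref{asymptdelta}).

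The principal obstacle is step~(iii), and more generally the fine analysis of $g_t$ and of the Arakelov metric on $\omar$ in an \emph{arbitrary} one-parameter semistable degeneration: this requires genuine potential theory on degenerating Riemann surfaces --- a configuration of long thin necks of conformal modulus growing like $\log|t|^{-1}$ --- and the passage to the limit of the normalized Green's-function equation against the degenerating measure $\mu_{\Ar,t}$. The off-diagonal convergence $g_t\to g_{\mu_G}$ is already delicate; pinning down the diagonal singularity, whose leading behaviour couples to the combinatorics of $G$ and ultimately produces the $\vareps$-term, is the crux. Specialising $G$ to a graph with a single edge recovers the asymptotics of Jorgenson~\cite{jo} and Wentworth~\cite{we}, which provides a consistency check.
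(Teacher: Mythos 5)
Your outer reduction is exactly the paper's own derivation in Section~\ref{sec:corollaries}: trivialize $\lambda_H$ by $\sigma=\omega_1\wedge\cdots\wedge\omega_h$ so that $\log\|\sigma\|_H=\tfrac12\log\det\Im\Omega(t)$, use the nilpotent orbit theorem to see that $\sigma$ generates the canonical (Mumford) extension $\lambda_1=\det\pi_*\omega$, use the Mumford isomorphism over $\Delta$ for the semistable model to absorb the $\delta\log|t|$ term, and reduce everything to the asymptotics of $\log\|\tau\|$ for a local generating section $\tau$ of $\kappa_1=\pair{\omega,\omega}$ with the Arakelov--Deligne metric. That part of your argument is correct and is essentially verbatim what the paper does, including the definition $\delta_F=-\log\|\mu_Y\|$ and the bookkeeping with $t^{-\delta}\mu$.

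The gap is that the remaining step --- the statement $\log\|\tau\|\sim-\vareps\log|t|$, i.e.\ Theorem~\ref{main_second}(a) --- is precisely the main content of the theorem, and you do not prove it: you replace it by a three-step potential-theoretic program (convergence of $\mu_{\Ar,t}$ to Zhang's measure, convergence of $g_{\Ar,t}$ to $g_\mu$ after rescaling, Robin-constant asymptotics on the diagonal) whose hardest parts you yourself flag as ``the principal obstacle'' and ``the crux''. Note moreover that even granting (ii) and (iii) pointwise, the Deligne metric on $\pair{\omar,\omar}$ involves fiberwise integrals of $\log$-norms against the degenerating admissible form $c_1(\omar)=(2h-2)\mu_{\Ar,t}$, and extracting the exact $\log|t|$-coefficient requires quantitative, uniform control of $g_{\Ar,t}$ on the collars where it itself grows like $\log|t|$; this is exactly the ``height jump'' phenomenon, and controlling it (the $\eta(\Gamma)$ correction in Proposition~\ref{maintechnical}, via Theorem~\ref{thm:learextensiondeligne}) is the paper's main technical work. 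The paper deliberately avoids the degenerating-surface potential theory altogether: it proves Theorem~\ref{main_second}(a) through the isometries of Section~\ref{sec:keyisom} relating $\pair{\omar,\omar}$ to $\pair{\delta_P^*\bb,\delta_P^*\bb}$ and $\kappa^*\bb$, Lear extensions and Pearlstein's nilpotent orbit theorem for the admissible variations of mixed Hodge structure attached to the period and Abel--Jacobi maps, and the explicit local expansion of $\log\|s\|$ near the nodes; even your step (i), the convergence of $\mu_{\Ar,t}$ to Zhang's measure, comes out there as a byproduct (see the Remark after Proposition~\ref{maintechnical}) rather than serving as an input. So as written your argument establishes the reduction but not the theorem; to complete it along your route you would have to carry out the neck analysis of $g_{\Ar,t}$ and the Arakelov metric with uniform error control, which is a substantial piece of analysis not contained in the proposal.
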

As said, we also consider the asymptotics of the Arakelov metric  and the related Arakelov-Green's function $g_\Ar$ in the family $\pi \colon \Xbar \to \Delta$. Denote by $\omega$ the relative dualizing sheaf of $\pi$. Then on the smooth locus $\Sm(\pi)\subset \Xbar$ of $\pi$ we can identify $\omega$ with the relative cotangent bundle. Moreover, on $X\subset \Xbar$ the line bundle $\omega$ is equipped with a canonical fiberwise Arakelov metric $\|\cdot\|_\Ar$.  Let again $G$ denote the polarized weighted dual graph of the special fiber $\Xbar_0$. Let $P$ be a section of $\pi$ with image contained in the smooth locus $\Sm(\pi)$ of $\pi$.
\begin{thm} \label{arakintro} 
 Let $g_\mu$ denote the admissible Green's function (\ref{ZhArGr}) of $G$.  Assume that $P$ specializes to the irreducible component $x \in V(G)$ of $\Xbar_0$. \\ 
 (a) Let $\d z$ be a local generating section of the relative dualizing sheaf $\omega$ around the point $P(0) \in \Sm(\pi)$. Then the asymptotics
\[ \log \| \d z(P) \|_{\Ar,t} \sim - g_\mu(x,x) \log |t| \]
hold as $t \to 0$. \\ 
(b) Assume that $\pi$ has a second section $Q$ with image contained in $\Sm(\pi)$, such that $P(0) \neq Q(0)$. Then we have the asymptotics
\[ g_{\Ar,t}(P,Q) \sim  g_\mu(x,y) \log |t| \]
as $t \to 0$. Here $x, y \in V(G)$ denote the irreducible components of $\Xbar_0$ to which $P, Q$ specialize.
\end{thm}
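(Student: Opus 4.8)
The plan is to first determine the asymptotic behaviour of the Arakelov canonical $(1,1)$-form $\mu_{\Ar,t}$ on the fibres $X_t$, and then to solve the defining Poisson equation for $g_{\Ar,t}$ uniformly in $t$, using the weak limit of $\mu_{\Ar,t}$ as the source term. Part (a) will be read off from the analysis underlying part (b): by the standard relation between the Arakelov metric on $\omega$ and the Arakelov--Green's function, $\log\|\d z(P(t))\|_{\Ar,t}$ equals minus the value at $P(t)$ of the regular part of $g_{\Ar,t}(\cdot,P(t))$ in the coordinate $z$, and since $P(0)\in\Sm(\pi)$ this coordinate and the diagonal singularity are uniformly controlled, so that a sufficiently precise description of $g_{\Ar,t}(\cdot,P(t))$ near $P(t)$ yields (a). Throughout I work with the thick/thin decomposition of $\Xbar$: the thick part is the union of the components $C_v$ of $\Xbar_0$ with small discs around the nodes removed, the thin part the union of necks, locally $\{xy=t^{\ell}\}$ with $\ell\in\nn$ the local multiplicity; under the reduction map $\mathrm{red}\colon\Xbar\to\Gamma$ onto the metrized graph attached to $G$ (edge lengths equal to the local multiplicities), the thick part of $C_v$ collapses to the vertex $v$, while a neck maps onto the incident edge with $\mathrm{red}=\log|x|/\log|t|\in[0,\ell]$.

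\emph{Step 1: asymptotics of $\mu_{\Ar,t}$.} Fix the symplectic framing of $R^1\pi_*\zz_X$ with associated normalized differentials $\omega_{1,t},\dots,\omega_{h,t}$ and period matrix $\Omega(t)$, so that $\mu_{\Ar,t}=\tfrac{i}{2h}\sum_{j,k}\bigl((\Im\Omega(t))^{-1}\bigr)_{jk}\,\omega_{j,t}\wedge\bar\omega_{k,t}$. I would analyse the degeneration of an $L^2$-orthonormal basis built from the $\omega_{j,t}$: one part converges to holomorphic differentials on the normalizations of the components, carrying mass that stays on the thick part; the remaining basis vectors, attached to the vanishing cycles, are normalized differentials of the third kind whose $L^2$-norm grows like $(-\log|t|)^{1/2}$ and whose mass concentrates in the necks, spreading along the edges. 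Accordingly $\Im\Omega(t)$ blows up like $-\tfrac{1}{2\pi}\log|t|$ in the vanishing-cycle directions, governed by the length pairing on the cycle space of $G$, while the complementary block stays bounded. Assembling the contributions, I expect that the push-forward $\mathrm{red}_*\mu_{\Ar,t}$ converges weakly, as $t\to0$, to Zhang's canonical admissible measure $\mu$ on $\Gamma$: the ``genus on a component'' part of the mass gives the point masses at the vertices with the correct weights, and the ``genus from the cycle structure'' part gives the edge densities dictated by the resistances of $G$. I would also want the quantitative form, namely that against suitable test functions the convergence is at rate $O(1/\log|t|)$. This step is moreover where the combinatorial invariants $\delta$ and $\vareps$ of Theorem~\ref{delta} originate.

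\emph{Step 2: the Green's function.} The function $z\mapsto g_{\Ar,t}(z,Q(t))$ is the Green's function for $\mu_{\Ar,t}$ --- its $dd^c$ equals $\mu_{\Ar,t}$ minus the Dirac current at $Q(t)$, its $\mu_{\Ar,t}$-average vanishing --- and $g_\mu(\cdot,y)$ is characterized analogously on $\Gamma$ relative to $\mu$. I would take as main term of a parametrix $\Psi_t$ the pulled-back function $(\log|t|)\,g_\mu(\mathrm{red}(\cdot),y)$; since $\mathrm{red}$ is locally constant on the thick parts and $g_\mu$ is quadratic along each edge, one checks that $dd^c$ of this main term vanishes on the thick parts and, by Step~1, reproduces the concentrated edge mass of $\mu_{\Ar,t}$ on each neck to leading order. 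It remains to correct, on each component $C_v$, by a bounded Green's function of (the normalization of) $C_v$ absorbing the smooth, $O(1)$-mass part of $\mu_{\Ar,t}$ there --- and, on $C_y$, the Dirac at $Q(t)$ as well --- the cutoffs and additive constants being arranged so that $dd^c\Psi_t-(\mu_{\Ar,t}-\delta_{Q(t)})$ is a smooth form of total mass $0$ whose potential is bounded uniformly in $t$, by uniform elliptic estimates on the thick part (which has bounded geometry) together with the explicit model on the necks. Then $g_{\Ar,t}(\cdot,Q(t))-\Psi_t$ extends continuously over $\Delta$ after a bounded renormalization restoring the $\mu_{\Ar,t}$-average (here the quantitative form of Step~1 is used), which is (b). Evaluating at $P(t)$ gives the asymptotics of $g_{\Ar,t}(P,Q)$; and near $P(t)$ the parametrix reduces to $g_\mu(x,x)\log|t|$ plus a bounded term plus a local parametrix for the logarithmic singularity, so reading off its regular part at $P(t)$, with $g_\mu(x,x)$ finite, yields (a).

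The main obstacle is the neck analysis in Steps~1 and~2. On the one hand one needs asymptotics, uniform in $t$, of the $L^2$-orthonormalized differentials in the transition region between the thick part of a component and a node --- in effect a multi-node refinement of Fay's expansions for a degenerating family, controlled by the limit mixed Hodge structure --- so that the concentration of $\mu_{\Ar,t}$ onto the edges can be matched quantitatively with Zhang's measure, including the precise edge densities. On the other hand one must glue the parametrix across the necks and propagate estimates there as their conformal modulus tends to infinity, where uniform-in-$t$ elliptic estimates break down and the explicit one-dimensional model has to take over. Away from the nodes everything is soft; all the real work lies in showing that the two-dimensional behaviour on the components patches to the one-dimensional behaviour along the edges of $\Gamma$ with errors that remain bounded as $t\to0$.
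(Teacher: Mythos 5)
Your route is genuinely different from the paper's. The paper never analyzes $\mu_{\Ar,t}$ or the Poisson equation directly: it expresses $\pair{\opar,\oqar}$ and $P^*\omar$ through Deligne pairings with the Poincar\'e bundle on the jacobian, proves that these hermitian line bundles admit Lear extensions over $\Delta$ (via the nilpotent orbit theorem and a height-jump computation, Sections \ref{asympresults}--\ref{mainaux}), computes those extensions combinatorially in Theorem \ref{main_second}(b),(c), and then Theorem \ref{arakintro} drops out in a few lines from Proposition \ref{learchar} applied to the generating sections $P^*\d z$ and $\pair{1_P,1_Q}$ (Section \ref{sec:corollaries}). Your Step 1 is consistent with what the paper obtains as a by-product (the remark at the end of Section \ref{mainaux} identifies the limit of $\mu_{\Ar,t}$ with Zhang's admissible measure), but there it is an output of the Hodge-theoretic asymptotics of Propositions \ref{asymptperiod} and \ref{asymptAJ}, not an input.

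As written, however, your proposal has genuine gaps. First, the entire analytic content is concentrated in exactly the two points you defer: uniform multi-node asymptotics of the $L^2$-orthonormalized differentials through the necks (so that $\mathrm{red}_*\mu_{\Ar,t}\to\mu$ with the precise edge densities $F(e)$, at a quantified rate), and uniform potential-theoretic estimates across necks whose modulus tends to infinity. These are not routine; they are of the same order of difficulty as the paper's height-jump analysis, and without them Steps 1--2 are assertions, not proofs. Second, even granting Steps 1--2 in the form you state them, the error control is too weak for the conclusion: ``potential bounded uniformly in $t$'' and weak convergence at rate $O(1/\log|t|)$ yield only $g_{\Ar,t}(P,Q)=g_\mu(x,y)\log|t|+O(1)$, whereas the theorem's $\sim$ requires the difference to extend \emph{continuously} over $\Delta$, i.e.\ to converge as $t\to0$. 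To upgrade, you must identify the limits of your component-wise correction Green's functions and of the renormalization restoring the $\mu_{\Ar,t}$-average (they should converge to data attached to the normalized components of $\Xbar_0$), which the proposal does not address; the same issue recurs in part (a), where the regular part of $g_{\Ar,t}$ at the diagonal (the residual/adjunction metric defining $\|\cdot\|_\Ar$) must be shown to converge, not merely to stay bounded, uniformly near $P(0)$.
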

We make a number of remarks about Theorems \ref{delta} and \ref{arakintro}. If the total space $\Xbar$ is smooth, the volume $\delta$ of $G$ is equal to the number of singularities in the special fiber $\Xbar_0$, i.e., to the discriminant of the family $\pi$ at the origin. The epsilon-invariant $\vareps$ of $G$ is slightly more complicated to describe. For its definition and a discussion of its main properties we refer to Section \ref{sec:admissible} below. One always has $\vareps \geq 0$, and actually $\vareps>0$ unless the special fiber is smooth, or the generic fiber has genus equal to one.

Classical estimates on period matrices (e.g., via the Nilpotent Orbit Theorem) imply that there exists $c \in \zz_{\geq 0}$ such that
\[ \det \Im \Omega(t) \sim -c\log|t|  \]
as $t \to 0$. Moreover, we have $c=0$ if and only if the special fiber $\Xbar_0$ has only separating nodes, i.e., when the dual graph $G$ is a tree. In this case the $ \log \det \Im \Omega(t)$ term in (\ref{asymptdelta}) can be subsumed under the $\sim$-sign. In the case where $\Xbar_0$ has at least one non-separating node, one can replace $ \log \det \Im \Omega(t)$ by $\log(-\log|t|)$. Combined with the previous remark we deduce that for all semistable degenerations over the disc with singular special fiber $\delta_F(X_t)$ blows up towards plus infinity as $t \to 0$. This nicely complements a result by R.~Wilms \cite[Corollary~1.2]{wi} that for each genus $h>0$, the delta-invariant $\delta_F$ is bounded from below on $\mm_h(\cc)$, and reinforces the idea that $\delta_F$ should be thought of as the minus logarithm of a canonical distance to the boundary.

The results of Jorgenson \cite{jo} and Wentworth \cite{we} on the delta-invariant can be stated as follows (we note that \cite{we} also gives precise expressions for constant terms, we leave them out here). Assume that the total space $\Xbar$ of the family $\pi \colon \Xbar \to \Delta$  is smooth, and assume that the special fiber $\Xbar_0$ has precisely one node.  Then if the normalization of the special fiber $\Xbar_0$ has two connected components of genera $i$ and $h-i$ (the ``separating'' case), one has the asymptotics
\begin{equation} \label{sep}   \delta_F(X_t) \sim -\frac{4i(h-i)}{h} \log|t|        
\end{equation}
as $t \to 0$. If the normalization of $\Xbar_0$ is connected (the ``non-separating'' case), one has the asymptotics
\begin{equation} \label{nonsep} \delta_F(X_t) \sim -\frac{4h-1 }{3h }\log|t| - 6 \log(-\log|t|)  
\end{equation} 
as $t \to 0$. We claim that both (\ref{sep}) and (\ref{nonsep}) can be deduced in a straightforward manner from Theorem~\ref{delta}.

Indeed, the epsilon-invariant of a polarized weighted graph $G$ with precisely one edge can be computed explicitly, see for example \cite{mosharp}. For $G$ of genus $h$ with one edge of weight one and with one vertex we have $\vareps = (h-1)/3h$, see \cite[Corollary 4.3]{mosharp}. For $G$ of genus $h$ with one edge of weight one and with two vertices of genera $i$ and $h-i$ we have $\vareps =-1+ 4i(h-i)/h$, see \cite[Lemma 4.4]{mosharp}. These two results together with (\ref{asymptdelta}) immediately reproduce (\ref{sep}) and (\ref{nonsep}). Similarly, we re-obtain to leading order the asymptotics for the Arakelov metric and Arakelov-Green's function in the one-node case as found by Jorgenson and Wentworth  by combining Theorem \ref{arakintro} with explicit calculations of the admissible Green's function for polarized weighted graphs with one edge, as in \cite[Proposition~4.2]{mosharp} and \cite[Lemma~4.4]{mosharp}.

Let $\|\cdot\|_{Q,\Ar}$ denote the Quillen metric \cite{qu} on the line bundle $\det \pi_*\omega$ over $\Delta^*$ derived from the Arakelov metric, and let $\|\cdot\|_H$ denote the Hodge-Petersson metric derived from the $L^2$ inner product on $ \pi_*\omega$. From Deligne's functorial Riemann-Roch \cite[Th\'eor\`eme~11.4]{de} one can derive the identity
\begin{equation} \label{analytic_torsion} 12 \log \| \cdot \|_{Q,\Ar} = 12 \log \|\cdot \|_H + \delta_F  
\end{equation}
up to an additive constant depending only on $h$. This says that  $\frac{1}{12}\delta_F$ is up to a constant depending only on $h$ the Ray-Singer analytic torsion of the Arakelov metric. Let $\sigma$ denote a local generating section of $\det \pi_*\omega$ over $\Delta$. It can be seen that
\begin{equation} \label{hodge_asympt} \log \|\sigma\|_H \sim \frac{1}{2} \log \det \Im \Omega(t) 
\end{equation}
as $t \to 0$ and combining (\ref{analytic_torsion}) and (\ref{hodge_asympt}) with Theorem \ref{delta} we deduce that 
\begin{equation} \label{asympt_quillen} 12\,\log \| \sigma \|_{Q,\Ar} \sim -(\delta+\vareps) \log|t|  
\end{equation}
as $t \to 0$. 

For semistable families of curves the asymptotic behavior of the Quillen metric has so far mainly been studied in the context of the hyperbolic metric, mostly using Selberg zeta function techniques, see for instance \cite{fm} and the references therein. General asymptotic formulae for the Quillen metric associated to a given relative K\"ahler metric are obtained in the references \cite{bi} \cite{bibo} \cite{er} \cite{yo07} \cite{yo98}. These references usually assume the relative K\"ahler metric to extend in a $C^\infty$ manner over the total space of the degeneration. For example, \cite[Corollaire~2.3]{bibo} implies that in our situation, assuming $\Xbar$ smooth, for any relative K\"ahler metric on $X$ that extends in a $C^\infty$ manner over $\Xbar$, the associated Quillen metric $\|\cdot\|_Q$ on $\det \pi_* \omega$ satisfies the asymptotic
\[ 12\,\log \| \sigma \|_Q \sim -\delta \log|t|   \]
as $t \to 0$. Comparing with (\ref{asympt_quillen}) we see that  $\vareps$ measures the failure for the Arakelov metric to extend as a $C^\infty$ metric over $\Xbar$. 

Faltings has obtained in \cite{fadeg} an extension of Theorems~\ref{delta} and~\ref{arakintro} above to the case where the base $\Delta$ is replaced by an arbitrary polydisk $\Delta^n$. This allows for a more complete description of the boundary behavior of the functions $\delta_F$ resp.\ $g_\mathrm{Ar}$ and the metric $\|\cdot\|_\Ar$ near the boundary of moduli space.  One of the main points of \cite{fadeg} is that, contrary to what one might naively expect, if $h \geq 2$ no non-zero multiple of the function $\delta_F$ can be obtained as the minus logarithm of the norm of a rational section of a smooth hermitian line bundle with a metric that is good in the sense of Mumford \cite{mu} on $\overline{\mm}_h(\cc)$. 

From (\ref{asymptdelta}) we can also see this ``non-good'' behavior by observing that for all $h \geq 2$ there exist polarized graphs $G$ of genus $h$ such that the epsilon-invariant $\vareps(G)$ is not a linear form in the weights of its edges. Similarly, for $h \geq 2$ the Arakelov metric $\|\cdot \|_\Ar$ is not a good hermitian metric on the relative dualizing sheaf on the universal curve over $\overline{\mm}_h(\cc)$. 

In order to prove Theorems~\ref{delta} and~\ref{arakintro} we will use  a number of tools. One of them is the concept of a \emph{Lear extension} of a hermitian line bundle. The terminology is due to R.\ Hain and the use of Lear extensions as an effective tool to study and formulate asymptotic properties in Hodge theory and Arakelov theory is demonstrated already in the papers \cite{hrar} and \cite{hain_normal}. Our approach and set-up is very much inspired by the latter. 

Let  $\Ybar \supset Y$ be complex manifolds, such that  the complement $D=\Ybar \setminus Y$ is a normal crossings divisor on $\Ybar$, and let $(L,\|\cdot\|)$ be a continuous hermitian line bundle on $Y$. Then, roughly speaking, a Lear extension of $(L,\|\cdot\|)$ over $\Ybar$ is a line bundle $\Lbar$ over $\Ybar$ coinciding with $L$ over $Y$ such that the metric $\|\cdot\|$ on $L$ extends continuously over $\Lbar$, away from the singular locus $D^\sing$ of $D$. Note that if a Lear extension exists, it is unique up to isomorphism, as $D^\sing$ has codimension at least two in $\Ybar$. Also note that if $Y$ is a curve, then a Lear extension over $\Ybar$ is the same as a continuous extension. 

Another tool that we will use is the \emph{Deligne pairing} of two line bundles on a family of nodal curves, as in \cite{de}. Let $\Ybar \supset Y$ be smooth complex algebraic varieties. Let $\pi \colon \Xbar \to \Ybar$ be a nodal curve, assumed to be smooth over $Y$, and write $X=\pi^{-1}Y$. Let $L, M$ be line bundles on $\Xbar$. The Deligne pairing associates to $L, M$ in a functorial and bi-multiplicative way a line bundle $\pair{L,M}$ on $\Ybar$. Assuming that $L, M$ both carry given $C^\infty$ hermitian metrics over $X$, then the Deligne pairing $\pair{L,M}$ over $Y$ is endowed with a canonical $C^\infty$ hermitian metric.  The idea to use the Deligne pairing as a device to study asymptotics of Arakelov invariants is certainly not new, and goes back to \cite{de}, see also \cite{bibo} \cite{er} \cite{fm}.

Let $\omega$ denote the relative dualizing sheaf of $\pi$. Following traditional notation, we set $\kappa_1=\pair{\omega,\omega}$, which is then a line bundle on $\Ybar$. If $P \colon \Ybar \to \Xbar$ is a section of $\pi$ we put $\psi = P^*\omega$, again following traditional notation. Then as we will see, the line bundle $\psi$ can be canonically identified with the Deligne pairing $\pair{\oo(P),\omega}$. We denote by $\opar$ the line bundle $\oo(P)$ on $X$ endowed with its $C^\infty$ hermitian metric derived from the Arakelov-Green's function $g_\Ar$, and we denote by $\omar$ the restriction of $\omega$ to $X$, endowed fiberwise with the Arakelov metric $\|\cdot \|_\Ar$.

We will derive Theorems \ref{delta} and \ref{arakintro} from the following main results.
\begin{thm} \label{main_first}  Let $\Ybar \supset Y$ be smooth complex algebraic varieties, such that $D=\Ybar \setminus Y$ is a normal crossings divisor on $\Ybar$. Let $\pi \colon \Xbar \to \Ybar$ be a nodal curve of positive genus, which we assume to be smooth over $Y$. Then  the $C^\infty$ hermitian line bundle $\pair{\omar,\omar}$ on $Y$ has a Lear extension over $\Ybar$. Assume that $\pi$ has a section $P$ resp.\ two sections $P, Q$. Then also the $C^\infty$ hermitian line bundles $\pair{\opar,\omar}$ resp.\ $\pair{\opar,\oqar}$ on $Y$ have a Lear extension over $\Ybar$. 
\end{thm}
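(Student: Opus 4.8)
The plan is to reduce everything to a local statement over a one-dimensional base, where "Lear extension" just means "continuous extension," and then invoke the compatibility of the Deligne pairing metric with base change and its behavior under the explicit local models of nodal degenerations. First I would observe that, since $D^{\sing}$ has codimension $\geq 2$ in $\Ybar$ and a Lear extension — if it exists — is unique, it suffices to construct the extension over $\Ybar \setminus D^{\sing}$, i.e.\ near the smooth locus of $D$. There, after an \'etale-local change of coordinates, we may assume $\Ybar = \Delta \times S$ with $D = \{0\} \times S$, $S$ a polydisk, and the family $\pi$ is a semistable degeneration over the first coordinate $t$, with $S$ providing moduli-type parameters along which the special-fiber configuration is locally constant. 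By continuity of the Deligne-pairing metric in the $S$-directions and the uniqueness of continuous extensions, it is enough to treat the case $\dim \Ybar = 1$, i.e.\ $\Ybar = \Delta$, $Y = \Delta^*$: one must show that the fiberwise Arakelov metrics on $\pair{\omar,\omar}$, $\pair{\opar,\omar}$, $\pair{\opar,\oqar}$ extend continuously across $t=0$.

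For the one-dimensional base, the key steps are: (i) fix an explicit algebraic trivialization of each Deligne pairing line bundle over $\Delta$ — for $\pair{\omega,\omega}$ one can use the Deligne-pairing axioms (bilinearity, the norm formula $\pair{\oo(P),\oo(Q)} \cong \oo_{\Ybar}$ for disjoint sections, and the adjunction-type isomorphism relating $\pair{\oo(P),\omega}$ to $\psi = P^*\omega$) to express the pairing in terms of values $g_\Ar(P,Q)$, $\|\d z(P)\|_\Ar$ of the Arakelov-Green's function and the Arakelov metric at suitable sections after a base change that produces enough disjoint sections through the smooth locus; (ii) express $\log$ of the metric on this trivialization, by the Deligne-pairing norm formula, as a finite sum of terms $\pm g_\Ar(P_i,P_j) + \text{(local contributions at the node)}$; (iii) quote the known asymptotics of the Arakelov-Green's function and the Arakelov metric in a one-node (hence, by iterating, arbitrary semistable one-parameter) degeneration — these are exactly the Jorgenson--Wentworth type estimates, whose leading terms are $c\log|t|$ for explicit rational $c$ — and combine them. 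The upshot of step (iii) is that each metric has the form $\|\cdot\|_t = |t|^{\alpha}(\text{continuous, nonvanishing near } t=0)$ for some rational exponent $\alpha$; one then \emph{defines} $\Lbar$ by twisting the chosen trivializing section by $t^{-\alpha}$ (possible after a ramified base change, or directly as a $\qq$-line bundle, cleared of denominators), so that the twisted metric extends continuously and nonvanishingly. Uniqueness over $\Ybar$ follows from the codimension bound, as already noted.

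The main obstacle I anticipate is step (i)--(ii): producing a genuinely algebraic (not merely $C^\infty$) trivialization of the Deligne pairings over $\Ybar$, to which the continuity/boundedness analysis can be attached, and tracking the metric through the isomorphisms $\psi \cong \pair{\oo(P),\omega}$ and $\kappa_1 = \pair{\omega,\omega}$ in a way that is compatible with the Arakelov normalization. Concretely, one needs sections of $\pi$ (possibly only after a finite base change $\Delta \to \Delta$, $t \mapsto t^m$, which is harmless for the existence of a Lear extension since it is an isomorphism in codimension one and Lear extensions descend) meeting the smooth locus and specializing to prescribed components, and one needs the explicit local model of $\omega$ and of $\oo(P)$ near a node $xy = t$ to evaluate the local intersection contributions in the Deligne-pairing norm formula. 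Once the Deligne pairings are trivialized and the Arakelov metric and Green's function asymptotics are in hand, the continuous extension across $t=0$ — and hence, by the reduction above, the Lear extension over the general $\Ybar$ — is immediate.
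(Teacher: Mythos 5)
There are two genuine gaps. First, your reduction to $\dim\Ybar=1$ does not work as stated: a Lear extension requires the metric to extend \emph{continuously on $\Ybar\setminus D^{\sing}$, jointly in all variables}, and continuity along each slice $\Delta^*\times\{s\}\subset\Delta\times S$ does not imply continuity of the error term near $D$ as a function of $(t,s)$. This multi-variable continuity is precisely the hard analytic content; the paper handles it by working over polydiscs $\Delta^n$ throughout, using Pearlstein's several-variable Nilpotent Orbit Theorem for the (admissible, graded-polarized) variations attached to the period and Abel--Jacobi maps to get expansions with error terms that are bounded continuous on all of $\Delta^n$ (Propositions \ref{asymptperiod} and \ref{asymptAJ}), and by a careful local analysis near the nodes ($uv=t_1$) where the ``height jump'' occurs (Theorem \ref{thm:learextensiondeligne} and Proposition \ref{maintechnical}). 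A slice-by-slice argument cannot see this.

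Second, your step (iii) is essentially circular. The Jorgenson--Wentworth asymptotics apply only to degenerations whose special fiber has exactly one node, and there is no ``iteration'' available for a single one-parameter family whose special fiber has several nodes: the asymptotics of $g_\Ar$ and $\|\cdot\|_\Ar$ in that generality are exactly Theorem \ref{arakintro}, which in the paper is \emph{deduced from} Theorems \ref{main_first} and \ref{main_second}, not an available input (the paper deliberately avoids quoting Jorgenson--Wentworth and recovers their results as special cases). Moreover, even granting a leading term $c\log|t|$, a Lear extension needs the remainder to extend continuously over the boundary, not merely to be bounded or $o(\log|t|)$, so quoting leading-order asymptotics would not suffice. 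The paper's actual route is different in kind: it rewrites $\pair{\omar,\omar}$, $\pair{\opar,\omar}$, $\pair{\opar,\oqar}$ in terms of pullbacks of the metrized Poincar\'e bundle along normal function sections ($\kappa^*\bb$, $\delta^*\bb$) and the Deligne pairing $\pair{\delta_P^*\bb,\delta_P^*\bb}$ (Propositions \ref{deltasq}, \ref{omega_in_terms_of_biext}, \ref{diagonal_metric}); Lear's theorem gives the extensions of $\kappa^*\bb$ and $\delta^*\bb$ for free, and the whole technical effort goes into the Lear extension of $\pair{\delta_P^*\bb,\delta_P^*\bb}$, where the height-jump correction $\eta$ appears. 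Your outline contains no substitute for that step.
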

\begin{thm} \label{main_second} Assume that $\pi \colon \Xbar \to \Delta$ is a nodal curve of positive genus over the unit disc, smooth over $\Delta^*$. Let $G$ denote the polarized weighted dual graph of the special fiber $\Xbar_0$, and let $\vareps$ be Zhang's epsilon-invariant (\ref{defineeps}) of $G$. Let $g_\mu$ be Zhang's admissible Green's function (\ref{ZhArGr}) of $G$.\\
(a) The Lear extension of $\pair{\omar,\omar}$ over $\Delta$ satisfies the equality
\begin{equation} \label{L1} \qquad \overline{\pair{\omar,\omar}} = \kappa_1 - \vareps[0] \, . 
\end{equation}
(b) Assume that $\pi \colon \Xbar \to \Delta$ is equipped with a section $P$ with image contained in the smooth locus $\Sm(\pi)$, such that $P$ specializes to $x \in \V(G)$.  Then the Lear extension of $\pair{ \opar, \omega_\Ar }$ over $\Delta$ satisfies the equality
\begin{equation} \label{L2} \qquad \overline{\pair{ \opar, \omega_\Ar }} = \psi - g_\mu(x,x)[0] \, . 
\end{equation}
(c) Assume that $\pi \colon \Xbar \to \Delta$ is equipped with two sections $P, Q$ with image contained in $\Sm(\pi)$. Assume that $P, Q$ specialize to $x \in \V(G)$ resp.\ $y \in V(G)$. Then the Lear extension of $\pair{\opar,\oqar}$ over $\Delta$ satisfies the equality
\begin{equation} \label{L3} \qquad \overline{\pair{\opar,\oqar}} = \pair{\oo(P),\oo(Q)} + g_\mu(x,y)[0] \, . 
\end{equation}
\end{thm}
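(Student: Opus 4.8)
\emph{Approach.} The plan is to recognise each of the three Lear extensions as the \emph{Zhang-admissible model} of the corresponding Deligne pairing, and then to compute the difference between that admissible model and the naive algebraic extension $\kappa_1$, resp.\ $\psi$, resp.\ $\pair{\oo(P),\oo(Q)}$, by a purely combinatorial calculation on $G$. Since $\Delta$ is a curve, a Lear extension is the same as a continuous metrized extension, and one exists by Theorem~\ref{main_first}; as it differs from the algebraic extension by a rational multiple of $[0]$, in each case it suffices to pin down a single rational number, namely the leading $\log|t|$-coefficient of the Deligne-pairing metric.

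\emph{Step 1 (reduction to a regular model and to Zhang's combinatorics).} I would first replace $\Xbar$ by the minimal resolution of singularities $\Xbar'\to\Xbar$ of its total space. The generic fibre, the sections $P,Q$ (which lie in $\Sm(\pi)$, over which the resolution is an isomorphism) and the three Deligne pairings are unaffected; the relative dualizing sheaf is unaffected, since the $A_n$-singularities of a node in a one-parameter family admit crepant resolution; and on dual graphs the only effect is a subdivision of edges, under which Zhang's measure $\mu$, Green's function $g_\mu$ and invariant $\vareps$ are invariant. So we may assume $\Xbar$ regular with reduced special fibre. On such a model every vertical Weil divisor is Cartier, and Zhang's theory furnishes the admissible models $\omega_a=\omega(\Phi)$, $\oo(P)_a=\oo(P+\Phi_P)$, $\oo(Q)_a=\oo(Q+\Phi_Q)$ — with $\Phi,\Phi_P,\Phi_Q$ explicit vertical $\qq$-divisors built from the values of $g_\mu$ on $V(G)$ — together with their admissible metrics. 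Bimultiplicativity of the Deligne pairing, together with the fact that the Deligne pairing of a line bundle $L$ with a line bundle $\oo(Z)$ supported on $\Xbar_0$ contributes the vertical intersection number $(L\cdot Z)$ to the coefficient of $[0]$, expresses $\pair{\omega_a,\omega_a}$, $\pair{\oo(P)_a,\omega_a}$, $\pair{\oo(P)_a,\oo(Q)_a}$ as $\kappa_1$, $\psi$, $\pair{\oo(P),\oo(Q)}$ twisted at $[0]$ by explicit vertical intersection numbers, and the combinatorial formulas of \cite{zh} — i.e.\ the very definitions (\ref{defineeps}) and (\ref{ZhArGr}) — identify those numbers with $-\vareps$, $-g_\mu(x,x)$ and $+g_\mu(x,y)$.

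\emph{Step 2 (the admissible model computes the Lear extension).} What remains is to show that over $\Delta$ the Lear extension of the Arakelov-metrized $\pair{\omar,\omar}$ equals $\pair{\omega_a,\omega_a}$, and likewise for (b) and (c). The admissible metrics on $\omega_a$, $\oo(P)_a$, $\oo(Q)_a$ are continuous over the whole of $\Xbar$, so their Deligne pairings are continuously metrized over $\Delta$ and hence equal their own Lear extensions; and over $\Delta^*$ the Arakelov metric on $\omar$ (resp.\ $\opar$, $\oqar$) agrees fibrewise with the admissible metric on $\omega_a$ (resp.\ $\oo(P)_a$, $\oo(Q)_a$), because on a smooth curve the admissible metric relative to the measure $\mu_{\Ar,t}=\tfrac{i}{2h}\sum_{j,k}(\Im\Omega(t))^{-1}_{jk}\,\omega_j\wedge\bar\omega_k$ \emph{is} the Arakelov metric. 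Thus the Arakelov-metrized Deligne pairing over $\Delta^*$ is the restriction of the continuously-metrized admissible pairing over $\Delta$, and (\ref{L1})--(\ref{L3}) follow from Step~1. The genuine difficulty — the step I expect to be the main obstacle — is to justify that the admissible metrics really do extend continuously over the \emph{nodes} of $\Xbar_0$, with no extra twist: on the annuli $\{\,|t|\le|u|\le1\,\}$ inside the collapsing necks ($uv=t$ locally) one must show that a generating section of $\omega_a$ there has Arakelov norm comparable, up to a bounded non-vanishing factor, to $|t|$ raised to an exponent affine in $\log|u|$ and interpolating the boundary values dictated by $\Phi$ — that is, that the Arakelov metric ``tropicalises'' to Zhang's admissible metric. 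This demands feeding the degeneration asymptotics of the normalised period matrix $\Omega(t)$, via the Nilpotent Orbit Theorem, into a uniform analysis of the Arakelov--Green's function across the shrinking necks, the subtle point being the interplay of the neck geometry with the normalisation $\int g_{\Ar,t}(\cdot,Q)\,\mu_{\Ar,t}=0$.
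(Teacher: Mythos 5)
Your Step 1 is fine as bookkeeping: after passing to the minimal resolution, expanding Zhang's admissible bundles $\omega_a=\omega(\Phi)$, $\oo(P)_a$, $\oo(Q)_a$ and intersecting the vertical correction divisors does reproduce the coefficients $-\vareps$, $-g_\mu(x,x)$, $+g_\mu(x,y)$ (this is exactly the content of Section~\ref{sec:admpairing} of the paper). But Step 2 is not a proof; it is a restatement of the theorem. The claim you defer --- that the fibrewise Arakelov metrics on $\omega$, $\oo(P)$, $\oo(Q)$ extend continuously over $\Xbar$ (or at least over $\Sm(\pi)$, uniformly enough across the shrinking necks and compatibly with the normalisation $\int g_{\Ar,t}\,\mu_{\Ar,t}=0$) to Zhang's admissible metrics, ``with no extra twist'' --- is precisely the analytic content of Theorems~\ref{arakintro} and~\ref{main_second}; you explicitly flag it as the main obstacle and do not carry it out. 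In the paper this statement (Theorem~\ref{lear=adm}) is a \emph{corollary} of Theorem~\ref{main_second}, obtained only after the asymptotics have been established by a different route: the key isometries of Section~\ref{sec:keyisom} convert the Arakelov-metrized pairings into Deligne pairings of pullbacks of the Poincar\'e bundle, whose metric has the explicit description of Proposition~\ref{normbiext}, so that the Nilpotent Orbit Theorem expansions of $\Im\Omega$ and $\Im\delta_P$ can be fed in directly; the hard step is then the Lear extension of $\pair{\delta_P^*\bb,\delta_P^*\bb}$ (Theorem~\ref{leardeltasq}).

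There is also a structural issue hidden in your phrase ``their Deligne pairings are continuously metrized over $\Delta$ and hence equal their own Lear extensions.'' Continuity of the metrics on $\Sm(\pi)$, i.e.\ away from the nodes, does \emph{not} determine the asymptotics of the Deligne pairing metric: the fibre integral $\int_{X_t}\log\|\ell\|\,c_1(M)$ in (\ref{defmetricpairing}) picks up contributions from the collapsing annuli around the nodes, and this is exactly the ``height jump'' phenomenon the paper has to quantify (the $\eta$-terms of Propositions~\ref{selfdelignepairingLear} and~\ref{maintechnical}, which cancel only after the explicit neck computation $d_3=-F(e)$). So to run your argument you would need genuine continuity of the metrics \emph{at} the nodes, or a substitute neck analysis of the same depth as Theorem~\ref{thm:learextensiondeligne}; neither is supplied, and for the Arakelov metric such continuity at the nodes is exactly what one cannot assume (the introduction points out that $\vareps$ measures the failure of the Arakelov metric to extend over $\Xbar$). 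As it stands, the proposal reduces the theorem to an unproved statement essentially equivalent to it.
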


We briefly describe the structure of our proof of Theorems \ref{main_first} and \ref{main_second}.  Let $j \colon J \to Y$ be the jacobian fibration associated to the smooth proper curve $X \to Y$. Assume from the outset that $\pi$ has a section $P \colon \Ybar \to \Xbar$. Let $\delta_P \colon X \to J$ be the Abel-Jacobi map associated to the section $P$, i.e., the $Y$-morphism $X \to J$ that sends a point $z \in X_y$ to the isomorphism class of $P_y-z$ in $J_y=\Jac(X_y)$. Let $h>0$ denote the genus of the fibers of $\pi$ and let $\kappa \colon Y \to J$ denote the morphism that sends $y \in Y$ to the divisor class of the degree zero line bundle $(2h-2)P_y - \omega_{X_y}$ in~$J_y$.  

As jacobians are canonically self-dual, the product $J \times_Y J$ carries a canonical Poincar\'e bundle $\mathcal{P}$, together with a rigidification along the zero section. By general results \cite{bhdj} \cite{hain_normal} \cite{hdj}, the Poincar\'e bundle $\mathcal{P}$ carries a canonical $C^\infty$ hermitian metric compatible with the rigidification. We denote by $\bb$ the restriction of $\mathcal{P}$ to the diagonal in $J \times_Y J$. From these data, we obtain a canonical $C^\infty$ hermitian line bundle $\delta_P^*\bb$ on $X$ and a canonical $C^\infty$ hermitian line bundle $\kappa^*\bb$ on $Y$.

The key to our argument is the observation (see Propositions \ref{deltasq} and \ref{omega_in_terms_of_biext}) that there exist canonical isometries
\begin{equation} \label{firstisom} \pair{\delta_P^*\bb, \delta_P^*\bb} \isom \pair{\opar,\omar}^{\otimes 4h}\otimes \pair{\omar,\omar} 
\end{equation}
and
\begin{equation} \label{secondisom} \pair{\opar,\omar}^{\otimes 4h^2} \isom \pair{\delta_P^*\bb, \delta_P^*\bb} \otimes \kappa^* \bb 
\end{equation}
of $C^\infty$ hermitian line bundles over $Y$. The isometries (\ref{firstisom}) and (\ref{secondisom}) imply that $\pair{\omar,\omar}$ and $\pair{\opar,\omar}$ have a Lear extension over $\Ybar$ if and only if both $\pair{\delta_P^*\bb, \delta_P^*\bb}$ and $\kappa^*\bb$ have a Lear extension over $\Ybar$. It follows from general results \cite{lear} \cite{hain_normal} \cite{pearldiff} that $\delta_P^* \bb$ and $\kappa^*\bb$ have a Lear extension over $\Xbar$ resp.\ $\Ybar$ (in the case of $\delta_P^*\bb$, we need to assume that $\Xbar$ is smooth). Following \cite{hain_normal} they can be computed explicitly. 

The main technical issue of this paper is then to prove that the Deligne pairing $\pair{\delta_P^*\bb, \delta_P^*\bb}$ has a Lear extension over $\Ybar$. We will see that the naive expectation that $\overline{\pair{\delta_P^*\bb, \delta_P^*\bb}  } $ should equal $\pair{\overline{\delta_P^*\bb}, \overline{\delta_P^*\bb}}$ fails in general, due to a ``height jump'' of the metric on $\delta_P^*\bb$ around the singular points of the fibers of $\Xbar \to \Ybar$. The terminology goes back to  \cite[Section~14]{hain_normal}. We are able to control the height jump, and an explicit calculation shows that $\overline{\pair{\delta_P^*\bb, \delta_P^*\bb}  }$ exists, and that the defect between $\overline{\pair{\delta_P^*\bb, \delta_P^*\bb}  }$ and $\pair{\overline{\delta_P^*\bb}, \overline{\delta_P^*\bb}}$ can be directly expressed in terms of the combinatorics of the dual graph of the special fiber. 

Theorem \ref{arakintro} follows in a straightforward manner from Theorem \ref{main_second}(b) and (c). In order to prove Theorem \ref{delta} we combine Theorem \ref{main_second}(a) with Mumford's functorial Riemann-Roch isomorphism (\ref{mumfordnorm}). By doing so we avoid the use of the ``spin-bosonization formula'' \cite[p.~402]{fa} for $\delta_F$ which is crucial in \cite{jo} and \cite{we}. 

Our asymptotic analysis of the metric on $\delta_P^*\bb$ near points of codimension two builds on work by G.~Pearlstein \cite{pearldiff} \cite{pearlhiggs}, by J.\ I.\ Burgos Gil, D.~Holmes and the author \cite{bhdj}, and by D.\ Holmes and the author \cite{hdj}.  Very similar results were obtained independently by O.\ Amini, S.\ Bloch, J.\ I.\ Burgos Gil and J.\ Fres\'an \cite{abbf}. The correct general framework to carry out this type of analysis is that of the several variables $SL_2$-orbit theorem of Pearlstein \cite{pearldiff} and Kato-Nakayama-Usui \cite{knu}, as explained in  the work of T.\ Hayama and Pearlstein in \cite{hp}.

The paper is organized as follows. In Sections \ref{jacobians}--\ref{lear} we discuss the main tools that we will use. These preliminary sections mostly contain known results and may well be skipped on first reading. In Section \ref{jacobians} we recall various useful ways of describing the jacobian of a compact and connected Riemann surface. Then in Section \ref{sec:arakmetric} we recall from \cite{ar} the Arakelov-Green's function $g_\Ar$ and the associated Arakelov metric $\|\cdot\|_\Ar$. Sections \ref{prelimgraphs}--\ref{sec:admissible} treat weighted graphs, metrized graphs, harmonic analysis on metrized graphs, the admissible Green's function and the epsilon-invariant. In Section \ref{prelimsemistable} we set the necessary conventions concerning nodal, semistable and stable curves. The Deligne pairing together with its canonical metric and the Faltings delta-invariant are recalled in Sections \ref{sec:delignepairing} and \ref{sec:metrization}. In Section \ref{sec:var_mhs} we study graded-polarized variations of mixed Hodge structures and state the Nilpotent Orbit Theorem for such variations due to Pearlstein. In Section  \ref{asympresults} we discuss, using the machinery of Section \ref{sec:var_mhs}, the asymptotics of the period and Abel-Jacobi map for families of curves with nodal degeneration over polydiscs. In Section \ref{lear} we recall from \cite{hrar} \cite{hain_normal} the notion of a Lear extension.

The first new results, including the isometries (\ref{firstisom}) and (\ref{secondisom}), are then proven in Section \ref{sec:keyisom}. In Sections \ref{learI} and \ref{delignelearII} we show the existence of the Lear extensions of some relevant Deligne pairings, and compute them explicitly in the case of a nodal curve over the unit disc. In Section~\ref{mainaux} we compute the Lear extension of $\pair{\delta_P^*\bb,\delta_P^*\bb}$, which is a crucial step in our argument as discussed above. In Section~\ref{proofmainresult} we derive Theorems~\ref{main_first} and~\ref{main_second} from the results of this computation. In Section \ref{sec:admpairing} we rephrase the results of Theorem \ref{main_second}  in terms of Zhang's admissible Deligne pairing from \cite{zh}, that we adapt here to the context of a nodal curve over a disc. In Section~ \ref{sec:corollaries} we finally derive Theorems~\ref{delta} and~\ref{arakintro} from Theorem~\ref{main_second}.

\subsection*{Acknowledgements} I thank Jos\'e Burgos Gil and David Holmes for many valuable discussions. 

\section{Jacobians} \label{jacobians} 

The purpose of this preliminary section is to discuss various ways of defining the jacobian of a pure Hodge structure of type $\{(-1,0), (0,-1)\}$, as well as their interrelations. At the end of this section we discuss the Abel-Jacobi element associated to two points $P, Q$ on a compact and connected Riemann surface $C$ in each of the various disguises of $\Jac(C)$. The material in this section is entirely classical.

Let $(V,F^\bullet)$ be a pure Hodge structure of type $\{(-1,0), (0,-1)\}$. We call the set $\mathrm{Ext}^1_{\mathrm{MHS}}(\zz(0),V)$ of extensions of $\zz(0)$ by $V$ in the category of mixed Hodge structures the (intermediate) jacobian of $(V,F^\bullet)$. By \cite[Section 3.5]{ps} we have a canonical bijection $\mathrm{Ext}^1_{\mathrm{MHS}}(\zz(0),V) \isom V_\cc/(V + F^0V_\cc)$, and this gives the jacobian of $(V,F^\bullet)$ a natural structure of complex torus. 

Assume that $(V,F^\bullet,Q)$ is a polarized Hodge structure of type $\{ (-1,0), (0,-1) \}$ and rank $2h$. Given a  symplectic basis $(e_1,\ldots,e_h,f_1,\ldots,f_h)$ of $(V,Q)$, there exists a unique associated normalized basis $(w_1,\ldots,w_h)$ of $F^0V_\cc$, determined by demanding that $w_i = -\sum_{j=1}^h \Omega_{ij}e_j + f_i$ for some matrix $\Omega$. We call $\Omega$ the period matrix of $(V,F^\bullet,Q)$ with respect to $(e_1,\ldots,e_h,f_1,\ldots,f_h)$. The Riemann bilinear relations imply that $\Omega^t=\Omega$ and $\Im \Omega >0$, so that $\Omega$ lies in Siegel's upper half space $U_h$ of degree $h$. 

Assume an extension
\begin{equation} \label{anyextension} 0 \to V \to V' \to \zz(0) \to 0 
\end{equation}
of mixed Hodge structures is given, i.e. an element of $\mathrm{Ext}^1_{\mathrm{MHS}}(\zz(0),V)$. Then $V'$ has weight filtration
\[ W_\bullet \colon \quad 0 \subset W_{-1} = V \subset W_0 = V' \, . \]
We denote by $F'^\bullet$ the Hodge filtration of $V'$. Taking $F'^0(-)_\cc$ in (\ref{anyextension}) then yields the extension
\[ 0 \to F^0V_\cc \to F'^0V'_\cc \to \cc \to 0 \]
of $\cc$-vector spaces. As can be readily checked, for each $e_0 \in V'$ that lifts the canonical generator of $\zz(0)$ in (\ref{anyextension}) there exists a unique $w_0 \in F'^0V'_\cc$ such that $w_0 \in e_0 + \cc$-$\spann(e_1,\ldots,e_h)$. Given such a lift $e_0$, we let $\delta = (\delta_1,\ldots,\delta_h) \in \cc^h$ be the coordinate vector determined by the identity $w_0=e_0+\sum_{j=1}^h \delta_j e_j$. We call $\delta$ the period vector of the mixed Hodge structure $(V',F'^\bullet,W_\bullet)$ on the basis $(e_0,e_1,\ldots,e_h,f_1,\ldots,f_h)$ of the $\zz$-module $V'$. Replacing $e_0$ by an element from $e_0+V$ changes $\delta$ by an element of $\zz^h+\Omega \zz^h$. The resulting map
$\mathrm{Ext}^1_{\mathrm{MHS}}(\zz(0),V) \to \cc^h/(\zz^h+\Omega \zz^h)$ is a bijection, compatible with the canonical structure of complex torus on $\mathrm{Ext}^1_{\mathrm{MHS}}(\zz(0),V)$. 

Let $C$ be a compact and connected Riemann surface of genus $h$. Then its first homology group $V=\rmH_1(C)$ carries a canonical pure polarized Hodge structure of type $\{(-1,0), (0,-1)\}$. The polarization is given by the intersection form $Q$. The Hodge filtration on $V_\cc$ can be described as follows. Let $\hh$ denote the $\cc$-vector space of harmonic $1$-forms on $C$. The integration map $\hh \otimes V_\cc \to \cc$ yields a natural identification $V_\cc \isom \hh^*$. Let $\omega(C)\subset \hh$ denote the subspace of holomorphic $1$-forms on $C$, and $\overline{\omega(C)} \subset \hh$ the subspace of anti-holomorphic $1$-forms. We have a natural decomposition $\hh=\omega(C)\oplus\overline{\omega(C)}$ and similarly $\hh^*=\omega(C)^* \oplus \overline{\omega(C)}^*$. Then $F^0V_\cc$ is the subspace of $V_\cc$ corresponding to $\omega(C)^*$ under the isomorphism $V_\cc \isom \hh^*$. 

We write $\Jac(C)$ as a shorthand for the jacobian of $(V,F^\bullet)$. The standard pairing $\rmH_1(C) \otimes \rmH^1(C) \to \zz$ induces an isomorphism $V_\cc/F^0V_\cc \isom F^0\rmH^1(C)_\cc^*=\omega(C)^*$. Then the inclusion $V \to V_\cc/F^0V_\cc$ corresponds to the integration map $V \to \omega(C)^*$ given by $c \mapsto \int_c$. We obtain a third useful description of $\Jac(C) = V_\cc/(V+F^0V_\cc)$ as the complex torus $\omega(C)^*/V$.  

For $P, Q \in C$ we write $V(P,Q)$ as a shorthand for the relative homology group $\rmH_1(C,\{P,Q\})$, endowed with its canonical mixed Hodge structure. We obtain an extension of mixed Hodge structures
\begin{equation} \label{extensionMHS} 0 \to V \to V(P,Q) \to \zz(0) \to 0 
\end{equation}
canonically associated to the divisor $P-Q$. Here the $\zz(0)$ on the right hand side is to be identified with the reduced homology group $\tilde{\rmH}_0(\{P,Q\})$, with the class of $P-Q$ corresponding to the positive generator of $\zz$. We call the resulting element of $\Jac(C)= \mathrm{Ext}^1_{\mathrm{MHS}}(\zz(0),V)$ the Abel-Jacobi element associated to the divisor $P-Q$, denoted by $\int_Q^P$. Under the identification $\Jac(C) = \omega(C)^*/V$ the Abel-Jacobi element corresponds to the element that is usually denoted by $\int_Q^P$. If $\Omega$ is a period matrix of $C$ on a chosen symplectic basis of $V$, with normalized basis $(w_1,\ldots,w_h)$ of $F^0V_\cc=\omega(C)^*$, the Abel-Jacobi element in $\cc^h/(\zz^h+\Omega \zz^h)$ associated to the divisor $P-Q$ is given by the vector $\int_Q^P(\omega_1,\ldots,\omega_h)$, where $(\omega_1,\ldots,\omega_h)$ is the dual basis of $(w_1,\ldots,w_h)$.

\section{Arakelov-Green's function and Arakelov metric} 
\label{sec:arakmetric}

In this section we introduce the Arakelov-Green's function $g_\Ar$ of a compact and connected Riemann surface of positive genus, see (\ref{deldelbarg_Ar}) and (\ref{normalization}). Also we introduce the Arakelov metric $\|\cdot\|_\Ar$ on the line bundle of holomorphic differentials. The main references for this section are \cite{ar} and \cite{fa}.

Let $C$ be a compact and connected Riemann surface of genus $h >0$. Denote by $\omega$ the line bundle of holomorphic differentials on $C$. On $\omega(C)$ we have a natural  hermitian inner product via the prescription
\begin{equation} \label{defineinnerproduct} \pair{\eta,\eta'} = \frac{i}{2} \int_C \eta \wedge \bar{\eta}' \, . 
\end{equation}
Let $(\eta_1,\ldots,\eta_h)$ be an orthonormal basis of $\omega(C)$. The Arakelov $(1,1)$-form of $C$ is defined to be the element
\begin{equation} \label{defArakvolume} \mu_\Ar = \frac{i}{2h} \sum_{j=1}^h \eta_j \wedge \overline{\eta}_j 
\end{equation}
of $A^2(C)$. It follows from the Riemann-Roch theorem that $\mu_\Ar$ is actually a volume form. Moreover, the Arakelov $(1,1)$-form is clearly normalized such that $\int_C \mu_\Ar = 1$. 

The Arakelov-Green's function is the generalized function on $C \times C$ determined by the conditions
\begin{equation} \label{deldelbarg_Ar}
\partial \bar{\partial}_z \, g_\Ar(P,z) =\pi i \, (\mu_\Ar(z) - \delta_P(z))
\end{equation}
and
\begin{equation} \label{normalization}
\int_C g_\Ar(P,z) \,\mu_\Ar(z) = 0
\end{equation}
for all $P  \in C$. An application of Stokes's theorem shows that one has a symmetry property
\begin{equation} \label{symmetry}
g_\Ar(P,Q) = g_\Ar(Q,P)
\end{equation}
for all distinct $P$, $Q$ in $C$. We have a local expansion
\begin{equation}
g_\Ar(P,Q) = \log |t(P) - t(Q)| + O(|t(P)-t(Q)|)
\end{equation}
for all distinct $P$, $Q$ in a coordinate chart $t \colon U \isom \Delta$ of $C$. In this expansion, the $O$-term is a $C^\infty$ function. It follows that the Arakelov-Green's function $g_\Ar(P,\cdot)$ develops a logarithmic singularity of order one at $P$.

The Arakelov-Green's function $g_\Ar$ induces a natural $C^\infty$ hermitian metric $\|\cdot \|$ on the holomorphic line bundle $L=\mathcal{O}_{C \times C}(\Delta)$ on $C \times C$, where $\Delta$ is the diagonal, by putting $\log \|1\|(P,Q)=g_\Ar(P,Q)$ for distinct $P, Q$ in $C$. Here $1$ denotes the canonical generating section of $\mathcal{O}_{C \times C}(\Delta)$. By restriction to vertical or horizontal slices of $C \times C$ we obtain natural $C^\infty$ hermitian metrics on the line bundles $\oo_C(P)$ for each $P \in C$.

There exists a canonical adjunction isomorphism
\begin{equation} \label{adjunction} \oo_{C \times C}(-\Delta)|_\Delta \isom \omega \, . 
\end{equation}
By this isomorphism one obtains a canonical $C^\infty$ residual metric $\| \cdot \|_\Ar$ on $\omega$. We call $\|\cdot \|_\Ar$ the Arakelov metric, and we usually write $\omega_\Ar$ to denote the line bundle $\omega$ equipped with the Arakelov metric. A $C^\infty$ hermitian line bundle $(L,\|\cdot\|)$ on $C$ is called admissible if its first Chern form $c_1(L,\|\cdot\|)$ is a multiple of $\mu_\Ar$.  Each $\oo_C(P)$ with its metric derived from $g_\Ar$ is admissible, and so is $\omega_\Ar$, by  \cite[Section~4]{ar}.  

Let $\Omega$ be a period matrix of $C$ on a  normalised basis of differentials $(\omega_1,\ldots,\omega_h)$ (see Section \ref{jacobians}). 
Then we have
\begin{equation} \label{hodgenorm} (\Im \Omega)_{jk} =\frac{i}{2} \int_C \omega_j \wedge \overline{\omega}_k = \pair{\omega_j,\omega_k }  \, . 
\end{equation}
This gives the useful formulas
\begin{equation} \label{formulaArmeasure}
\mu_\Ar = \frac{i}{2h} \sum_{1 \leq j,k \leq h} (\Im \Omega)^{-1}_{jk} \omega_j \wedge \overline{\omega}_k  
\end{equation}
and
\begin{equation} \label{normhodge} \| \omega_1 \wedge \cdots \wedge \omega_h \|_{H}^2 = \det \Im \Omega  \, ,  
\end{equation}
see for instance \cite[Section~2]{we}. Here $\|\cdot\|_H$ is the norm on $\det \omega(C)$ induced by the inner product (\ref{defineinnerproduct}).

Let $J = \cc^h/(\zz^h + \Omega \zz^h)$ be the jacobian of $C$. The intersection form on $H_1(C)$ gives rise to a canonical isomorphism between $J$ and its dual torus $\check{J}$. It follows that $J \times J$ carries a canonical Poincar\'e bundle $\pp$ together with a rigidification at the origin. We recall that $\pp$ carries a canonical hermitian metric, compatible with the rigidification. We denote then by $\bb$ the $C^\infty$ hermitian line bundle on $J$ obtained by restricting $\pp$ to the diagonal. By a slight abuse of language, we will refer to $\bb$ as the Poincar\'e line bundle on $J$.
\begin{prop} \label{normbiext} Let $p \colon \cc^h \to J$ be the projection, and let $U \subset J$ be an analytic open subset. Let $s$ be a  holomorphic generating section of $\bb$ over $U$, and write $f = \left(p|_{p^{-1}U}\right)^*s \in \oo_{\cc^h}(p^{-1}U)$. Then for all $z \in p^{-1}U$ we have
\[ \log\|s\|(p(z)) = \log|f|(z)-2\pi\, (\Im z)^t (\Im \Omega)^{-1} (\Im z)  \, . \]
For the first Chern form $c_1(\bb)$ of $\bb$ the identity
\[ \ceeone(\bb) = \frac{-1}{\pi i} \deldelbar \, 2\pi \,(\Im z)^t (\Im \Omega)^{-1} (\Im z)   \]
of $(1,1)$-forms holds on $J$.
\end{prop}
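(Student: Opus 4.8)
The plan is to explicitly construct the Poincaré bundle and its metric by descent from a trivial bundle on $\cc^h$, following the standard theory of line bundles on complex tori (Mumford, \emph{Abelian Varieties}) specialized to the principally polarized case. First I would recall that on the dual torus picture $\check J \cong \Pic^0(C) \cong \cc^h/(\zz^h + \Omega\zz^h)$, the Poincaré bundle $\pp$ on $J \times \check J$ is, via the canonical self-duality identification $J \isom \check J$ coming from the intersection form, the line bundle whose pullback to $\cc^h \times \cc^h$ is trivial, but equipped with the factor of automorphy describing the gluing over the lattice $\Lambda \times \Lambda$ with $\Lambda = \zz^h + \Omega\zz^h$. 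Restricting to the diagonal, $\bb = \pp|_\Delta$ has pullback to $\cc^h$ trivial, with factor of automorphy $e_\lambda(z)$ for $\lambda \in \Lambda$ built from the Hermitian form $H(z,w) = 2\,(\Im z)^t(\Im\Omega)^{-1}(\overline{w})$ (suitably normalized) whose imaginary part is the standard symplectic form on $\Lambda$; the diagonal restriction doubles the relevant form, which is the source of the factor $2$ and the $4\pi$ one gets after the $\deldelbar$.

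The second step is to pin down the metric. The canonical metric on $\pp$ compatible with the rigidification is the unique (up to the rigidification normalization) metric whose curvature is translation-invariant and represents $c_1(\pp)$; on the trivialization of $p^*\bb$ over $\cc^h$ it is given by $\|1\|^2(z) = e^{-\pi H(z,z)}$ for the appropriate $H$, i.e. $\log\|1\|(z) = -2\pi\,(\Im z)^t(\Im\Omega)^{-1}(\Im z)$, using that $H(z,z) = 2(\Im z)^t(\Im\Omega)^{-1}(\Im z)$ is real. One checks this is invariant under the factor of automorphy, hence descends, and is compatible with the rigidification at the origin since it equals $1$ there. Then for a holomorphic generating section $s$ of $\bb$ over $U$ with $f = (p|_{p^{-1}U})^* s$, one has $\log\|s\|(p(z)) = \log|f|(z) + \log\|1\|(z) = \log|f|(z) - 2\pi\,(\Im z)^t(\Im\Omega)^{-1}(\Im z)$, which is the first claimed formula.

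For the curvature formula, since $\log|f|$ is pluriharmonic (as $f$ is holomorphic and non-vanishing on $p^{-1}U$), we get
\[ c_1(\bb) = \frac{-1}{\pi i}\,\deldelbar \log\|s\| = \frac{-1}{\pi i}\,\deldelbar\bigl(-2\pi\,(\Im z)^t(\Im\Omega)^{-1}(\Im z)\bigr), \]
and since the right-hand side is manifestly independent of the choice of section and of $U$ (the $\log|f|$ terms always drop out), this globalizes to a $(1,1)$-form on all of $J$, giving the second claim. One can moreover note that an explicit computation with $z = x + \Omega y$, $x,y \in \rr^h$, gives $\deldelbar$ of this quantity proportional to $\sum dx_j \wedge dy_j$, so $c_1(\bb)$ is a nonzero translation-invariant form; this is consistent with $\bb$ being (a multiple of) a polarization, but for the statement as given only the displayed identity is needed.

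The main obstacle is bookkeeping with normalizations: getting the exact numerical constants (the factor $2$ in $2\pi(\Im z)^t(\Im\Omega)^{-1}(\Im z)$, equivalently the $4\pi$ that would appear as the Hermitian-form coefficient) requires being careful about (i) the factor $\tfrac{i}{2}$ versus $i$ conventions in the polarization/intersection form, (ii) the self-duality identification $J \isom \check J$ and whether one picks up the factor from restricting $\pp$ to the diagonal versus keeping it on $J \times \check J$, and (iii) the precise form of the factor of automorphy and the matching $e^{-\pi H(z,z)}$ metric. I would fix conventions by cross-checking against \eqref{hodgenorm} and \eqref{formulaArmeasure} — e.g. verifying that with these constants the curvature of $\bb$ integrates correctly over $J$ — rather than trusting the constants blindly. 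The rest (pluriharmonicity of $\log|f|$, invariance of the metric under the lattice action, descent) is routine.
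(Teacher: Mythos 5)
Your proposal is correct in outline but takes a genuinely different route from the paper: for the norm formula the paper simply cites \cite[Proposition~3.3]{hdj}, and then obtains the Chern form identity exactly as you do, from $c_1(\bb)|_U=\frac{1}{\pi i}\deldelbar \log\|s\|$ together with the vanishing of $\deldelbar\log|f|$ for the holomorphic, non-vanishing $f$. What you propose instead is a self-contained proof of the first formula by descent from $\cc^h$: fix a trivialization of $p^*\bb$ by a factor of automorphy, exhibit the metric on the trivializing section, check invariance under the cocycle and compatibility with the rigidification, and use uniqueness of the rigidified metric with translation-invariant curvature. That is essentially how the cited result is proved, so your route buys self-containedness and an explicit description of the canonical metric, at the cost of the normalization bookkeeping you yourself flag; the paper's route is shorter because the trivialization and constants are fixed once and for all in the reference.

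One point in your sketch deserves to be done carefully, because it is not merely a multiplicative constant: the quantity $2\,(\Im z)^t(\Im\Omega)^{-1}(\Im z)$ is \emph{not} the diagonal value of a Hermitian form (that would be of the shape $z^tM\bar z$), so the identification ``$\|1\|^2=e^{-\pi H(z,z)}$ with $H(z,z)=2(\Im z)^t(\Im\Omega)^{-1}(\Im z)$'' conflates two different metrics on the pullback. With the classical Appell--Humbert factor of automorphy for the bundle with Hermitian form $2\,z^t(\Im\Omega)^{-1}\bar w$ (the form of $\bb$, as you note via the doubling on the diagonal), the trivializing section has norm $e^{-\pi z^t(\Im\Omega)^{-1}\bar z}$, which differs from $e^{-2\pi(\Im z)^t(\Im\Omega)^{-1}(\Im z)}$ by the pluriharmonic factor $e^{\pi\,\Re\left(z^t(\Im\Omega)^{-1}z\right)}$; this discrepancy is precisely a change of trivialization of $p^*\bb$. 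Since the proposition asserts the formula for $f=(p|_{p^{-1}U})^*s$ with respect to a specific (implicit) trivialization, you must choose the cocycle whose modulus is $1$ under $z\mapsto z+m$, $m\in\zz^h$, and $e^{4\pi n^t\Im z+2\pi n^t(\Im\Omega)n}$ under $z\mapsto z+\Omega n$, and verify invariance of the proposed norm and its value $1$ at the origin for that choice. Once this is pinned down, the rest of your argument (uniqueness via translation-invariant curvature, pluriharmonicity of $\log|f|$, globalization of the $(1,1)$-form) goes through and reproduces the statement.
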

\begin{proof} For the first formula we refer to \cite[Proposition 3.3]{hdj}. The second formula follows from the first since locally on $U$ we can write
\[ c_1(\bb)|_U = \frac{1}{\pi i} \deldelbar \log\|s\| \, , \]
and the $\deldelbar$ of $\log|f|$ vanishes.
\end{proof}
For $P \in C$ we denote by $\delta_P \colon C \to J$ the Abel-Jacobi map that sends
\begin{equation} \label{abeljacobi} Q \mapsto \int_Q^P (\omega_1,\ldots,\omega_h) \, .  
\end{equation}
\begin{cor} \label{cor:ceeone}
For all $P \in C$ the equalities 
\[  c_1(\delta_P^*\bb) =  \frac{-1}{\pi i} \deldelbar \, 2\pi(\Im \delta_P)^t (\Im \Omega)^{-1} (\Im \delta_P)  = 2h \,\mu_\Ar  
 \]
of $(1,1)$-forms hold. In particular $\delta_P^*\bb$ is admissible. 
\end{cor}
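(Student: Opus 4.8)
The plan is to deduce the first equality by pulling back the Chern form formula of Proposition~\ref{normbiext} along the Abel--Jacobi map $\delta_P$, and then to obtain the second equality by an explicit $\deldelbar$-computation; the admissibility assertion will then be immediate from the definition.

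First I would fix a simply connected open subset $U\subset C$ and choose a holomorphic lift $\tilde\delta_P = (u_1,\dots,u_h)\colon U\to\cc^h$ of $\delta_P|_U$, normalised so that $u_j(Q) = \int_Q^P\omega_j$ taken along paths inside $U$. Then $\tilde\delta_P$ is the pullback under $\delta_P$ of the coordinate $z$ on $\cc^h$, so $\delta_P^*\Im z = \Im\tilde\delta_P$, which is the object denoted $\Im\delta_P$ in the statement. Since pullback of differential forms commutes with $\partial$ and $\bar\partial$, applying $\delta_P^*$ to the identity $c_1(\bb) = \frac{-1}{\pi i}\deldelbar\,2\pi(\Im z)^t(\Im\Omega)^{-1}(\Im z)$ of Proposition~\ref{normbiext} gives the first equality on $U$. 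A different choice of lift changes $\tilde\delta_P$ by an element of $\zz^h+\Omega\zz^h$, hence changes the quadratic potential by a pluriharmonic function; that this leaves the $\deldelbar$ unchanged is exactly the global well-definedness of $c_1(\bb)$ on $J$ already recorded in Proposition~\ref{normbiext}, so the formula patches to all of $C$.

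Next I would carry out the computation. Differentiating $u_j(Q) = \int_Q^P\omega_j$ in $Q$ gives $du_j = -\omega_j$ on $U$; only the products $du_j\wedge\overline{du_k} = \omega_j\wedge\overline{\omega}_k$ will enter, so the sign is immaterial. Writing $M = (\Im\Omega)^{-1}$, a real symmetric matrix, and $\Im u = \tfrac{1}{2i}(u-\bar u)$, one expands
\[ 2\pi(\Im u)^t M(\Im u) = -\tfrac{\pi}{2}\bigl(u^t Mu - 2\,u^t M\bar u + \bar u^t M\bar u\bigr) \, . \]
Applying $\deldelbar$, the holomorphic term $u^t Mu$ and the antiholomorphic term $\bar u^t M\bar u$ drop out (the first killed by $\bar\partial$, the second by $\partial$, since $\bar\partial\omega_j = \partial\overline{\omega}_k = 0$ on the curve $C$), while $\deldelbar(u^t M\bar u) = \sum_{j,k}M_{jk}\,du_j\wedge\overline{du_k} = \sum_{j,k}(\Im\Omega)^{-1}_{jk}\,\omega_j\wedge\overline{\omega}_k$. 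This yields $\deldelbar\bigl(2\pi(\Im\delta_P)^t(\Im\Omega)^{-1}(\Im\delta_P)\bigr) = \pi\sum_{j,k}(\Im\Omega)^{-1}_{jk}\,\omega_j\wedge\overline{\omega}_k$, and multiplying by $-1/(\pi i)$ and comparing with formula~(\ref{formulaArmeasure}) gives $c_1(\delta_P^*\bb) = i\sum_{j,k}(\Im\Omega)^{-1}_{jk}\,\omega_j\wedge\overline{\omega}_k = 2h\,\mu_\Ar$, which is the second equality. Since this exhibits $c_1(\delta_P^*\bb)$ as a constant multiple of $\mu_\Ar$, the line bundle $\delta_P^*\bb$ is admissible by definition.

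I do not expect a serious obstacle: beyond Proposition~\ref{normbiext}, the only inputs are the definition~(\ref{abeljacobi}) of $\delta_P$ and the expression~(\ref{formulaArmeasure}) for $\mu_\Ar$ in terms of the period matrix. The one point deserving a word of care is the passage, in the second paragraph, from the locally defined quadratic potential — which depends on a branch of the multivalued Abel--Jacobi map — to the globally defined Chern form on $C$; this is the mildest of issues, handled by the fact that $c_1(\bb)$ is already a well-defined global $(1,1)$-form on $J$. The remainder is the routine $\deldelbar$-computation above.
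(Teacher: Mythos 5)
Your proposal is correct and follows essentially the same route as the paper: the first equality is obtained by pulling back the Chern form identity of Proposition~\ref{normbiext} along $\delta_P$, and the second by the routine $\deldelbar$-computation of the quadratic potential using $d(\delta_P)_j=\pm\omega_j$ and formula~(\ref{formulaArmeasure}). Your extra care about choosing local lifts of the multivalued Abel--Jacobi map and the expansion of $\Im\delta_P$ into holomorphic, mixed and antiholomorphic pieces are just more explicit renderings of steps the paper treats as immediate.
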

\begin{proof} The first equality is clear from Proposition \ref{normbiext}. From (\ref{abeljacobi}) we find 
\[ 2i\, \partial (\Im \delta_P)_j = \omega_j \, , \quad
-2i \, \bar{\partial} (\Im \delta_P)_k = \overline{\omega}_k \]
and hence
\[ 4\,\partial (\Im \delta_P)_j \wedge \overline{\partial} (\Im \delta_P)_k = \omega_j \wedge \overline{\omega}_k \, . \] 
This then gives
\[ \begin{split}
\frac{-1}{\pi i} \deldelbar \, 2\pi(\Im \delta_P)^t (\Im \Omega)^{-1} (\Im \delta_P) & = 4i \sum_{1 \leq j, k \leq h} (\Im \Omega)^{-1}_{jk} 
\partial (\Im \delta_P)_j \wedge \overline{\partial} (\Im \delta_P)_k
\\
& =i \sum_{1 \leq j,k \leq h} (\Im \Omega)^{-1}_{jk} \omega_j \wedge \overline{\omega}_k \, . 
\end{split} \] 
The latter form is equal to $ 2h\, \mu_\Ar$ by equation (\ref{formulaArmeasure}).
\end{proof}

\section{Graphs, weighted graphs, metrized graphs} \label{prelimgraphs}

We base our terminology on \cite[Section~2]{bf}. A finite graph $G$ consists of a finite set $E(G)$ and a non-empty finite set $V(G)$ together with a map of sets $ E(G) \to (V(G) \times V(G))/\mathfrak{S}_2$, the vertex assignment map. We call $E(G)$ the set of edges of $G$, and $V(G)$ the set of vertices of $G$. A divisor on $G$ is an element of $\rr^{V(G)}$. 
An orientation of $G$ is a lift $E(G) \to V(G) \times V(G)$ of the vertex assignment map, usually denoted by $e \mapsto (e^-,e^+)$. 

If $G$ is a finite connected graph with set of edges $\E(G)$ and set of vertices $\V(G)$, and $M$ is a set, then an $M$-labelling of $G$ is any map $\ell \colon \E(G) \to M$. When $M$ is a subset of $\rr_{>0}$, we call $(G,\ell)$ a weighted graph. Let $(G,\ell)$ be a weighted graph. The discrete Laplacian $L \colon \rr^{V(G)} \to \rr^{V(G)}$ is given as follows. Fix an orientation on $G$. For $f \in \rr^{V(G)}$ we define $d_*f \in \rr^{E(G)}$ by
\[ (d_*f)(e) = \frac{f(e^+)-f(e^-)}{\ell(e)} \, , \] 
and for $\alpha \in \rr^{E(G)}$ we define $d^*\alpha \in \rr^{V(G)}$ by
\[ (d^*\alpha)(x) = \sum_{e \in E \colon x=e^+} \alpha(e) - \sum_{e \in E \colon x=e^-} \alpha(e) \, . \]
Then for $f \in \rr^{V(G)}$ we put $L(f)=d^*d_*f \in \rr^{V(G)}$. It can be checked that $L(f)$ is independent of the choice of orientation. It is often convenient to think of $L$ as a matrix with rows and columns indexed by $V(G)$ and with entries $L(x,y)=L(\delta_x)(y)$ for $x, y \in V(G)$. Let $L^+$ be the pseudo-inverse of $L$. Then for $x, y \in \V(G)$ we define $\bar{g}(x,y) = L^+(x,y)$. By extending bilinearly we obtain a map $\bar{g} \colon \rr^{V(G)} \times \rr^{V(G)} \to \rr$.

A metrized graph $\Gamma$ is a compact connected metric space such that $\Gamma$ is a point or for each $x \in \Gamma$ there exist $n \in \zz_{>0}$ and $\eps \in \rr_{>0}$ such that $x$ has a neighborhood isometric to the set 
\[ S(n,\eps)= \{ z \in \cc \, : \, z = t e^{2 \pi i k/n} \,\, \textrm{for some} \,\, 0 \leq t < \eps \,\, \textrm{and some}\, \, k \in \zz \} \, , \]
endowed with its path metric. If $\Gamma$ is a metrized graph, not a point, then for each $x \in \Gamma$ the integer $n$ is uniquely determined, and is called the valence of $x$. An orientation of $\Gamma$ is an oriented simplicial decomposition of $\Gamma$. Let $V_0 \subset \Gamma$ be the set of points $x \in \Gamma$ with valency $\neq 2$. This is a finite set. We call any finite non-empty set $V \subset \Gamma$ containing $V_0$ a vertex set of $\Gamma$.

If $V$ is a vertex set of $\Gamma$ then $\Gamma \setminus V$ is a finite union of open intervals. The closure of a connected component of $\Gamma \setminus V$ is called an edge associated to $V$. Let $E$ be the set of edges associated to $V$. For $e \in E$ we call $e\setminus e^o$ the set of endpoints of $e$. This is a finite set consisting of either one (when $e$ is a loop) or two (when $e$ is a closed interval) elements. If $\Gamma$ is oriented, each set of endpoints is an ordered subset $\{e^-,e^+\}$ of $V$. We denote by $\ell(e)$ the length of an edge $e$, and by $\delta(\Gamma)=\sum_{e \in E} \ell(e)$ the volume of $\Gamma$.

A metrized graph $\Gamma$ with vertex set $V$ can be viewed as an electrical network by identifying each element of $V$ with a node, and each edge $e \in E$ with a wire of resistance $\ell(e)$. In particular, when $x, y$ are points of $\Gamma$, we have the effective resistance $r(x,y)$ between $x$ and $y$.  A divisor on $\Gamma$ is an element of $\rr^V$.

Let $(G,\ell \colon E(G) \to  \rr_{>0})$ be a finite connected  weighted graph. Then to $(G,\ell)$ is naturally associated a metrized graph $\Gamma$ by glueing the closed intervals $[0,\ell(e)]$, where $e$ runs through $E(G)$, according to the vertex assignment map. The metrized graph $\Gamma$ is equipped with a distinguished vertex set $V(G) \subset \Gamma$. The valence of a vertex $x \in V(G)$ is equal to its valence as a point on $\Gamma$.  It is well known, see for instance  \cite[Section~7]{hdj}, that for all $x, y \in V(G)$ the identity
\begin{equation} \label{greenandresistance} \bar{g}(x-y,x-y)=r(x,y) 
\end{equation}
holds, where $r$ denotes effective resistance on the associated metrized graph $\Gamma$. 

\section{Harmonic analysis on metrized graphs} \label{harmonic}

The following is based on \cite[Section~2]{bf}, \cite[Section~2]{cr} and \cite[Appendix]{zh}. 

Let $\Gamma$ be a metrized graph, not a point, and assume a vertex set $V$ is given, with associated edge set $E$. For $e \in E$, let $\d \,y_e$ be the usual Lebesgue measure on $e$ and $\ell(e)$ be the volume of $e$. For $x \in V$ with valence $n$, let $S(n,\eps) \isom U_x \subset \Gamma$ be a star-shaped open neighborhood of $x$ in $\Gamma$. A connected component of $U_x \setminus \{x\}$ is called an emanating direction from $x$. We denote the set of emanating directions from $x$ by $E(x)$. Each emanating direction from $x$ is naturally identified with a unit vector $v_k$ connecting the origin in $\cc$ with the root of unity $\exp(2\pi i k/n)$. For $f$ an $\rr$-valued function on $\Gamma$ which is smooth outside $V$ with respect to the measure $\d y$, we define the one-sided derivative of $f$ at $x$ in the direction $v \in E(x)$ to be 
\[ df_v(x) = \lim_{y \to 0+} \frac{ f(x+yv)-f(x) }{y} \, , \]
if the limit exists. We denote by $C(\Gamma)$ the set of 
$\rr$-valued continuous functions on $\Gamma$ that are smooth outside $V$ and for which $df_v(x)$ exists for all $x \in V$ and all emanating directions $v \in E(x)$. We denote by $C(\Gamma)^*$ the set of linear functionals on $C(\Gamma)$. We call an element of $C(\Gamma)^*$ a current on $\Gamma$. For example, each $x \in V$ gives rise to a Dirac measure $\delta_x \in C(\Gamma)^*$ given by sending $f \mapsto f(x)$. Integration of currents over $\Gamma$ gives a natural linear map $C(\Gamma)^* \to \rr$.

The Laplacian $\Delta \colon C(\Gamma) \to C(\Gamma)^*$ is the linear operator given by
\[ \Delta(f) = \Delta^{\mathrm{dis}}(f)+ \Delta^{\mathrm{cts}}(f) \, , \quad \Delta^{\mathrm{dis}}(f)=-\delta(f) \, , \quad \Delta^{\mathrm{cts}}(f) = -f''(y) \, \d \, y \, , \]
where the map $\delta \colon C(\Gamma) \to C(\Gamma)^*$ is defined via
\[ \delta(f) = \sum_{x \in V} \sum_{v \in E(x) } df_v(x) \, \delta_x    \]
for all $f \in C(\Gamma)$.

We have the following connection with the usual discrete Laplacian $L \colon \rr^V \to \rr^V$ associated to the vertex set $V$. We obtain a linear map $L_V \colon C(\Gamma) \to \rr^V$ by putting $L_V(f)=L(f|_V)$ for $f \in C(\Gamma)$. Let $\psi \colon \rr^V \to C(\Gamma)^*$ be the canonical map sending a divisor $c = \sum_{x\in V} c(x) x$ to the corresponding Dirac measure $\psi(c)=\sum_{x \in V} c(x) \delta_x$. In particular we have $\psi(c)(g)=\sum_{x \in V}c(x) g(x)$ for all $g \in C(\Gamma)$ and $c \in \rr^V$. 
Assume that $f \in C(\Gamma)$ is quadratic on each edge $e \in E$, and suppose that $f''=a(e)/\ell(e)$ on $e \in E$. 
Let $\nu_f$ denote the discrete measure with support in $V$ that assigns mass $\nu_f(x)=-\frac{1}{2}\sum_{e \in E(x)} a(e)$ to $x \in V$. Then we have the equality of currents
\begin{equation} \label{explicitdeltaf} 
-\delta(f) = \psi (L_V(f)) - \nu_f 
\end{equation}
in $C(\Gamma)^*$. Indeed, if $x$ is endpoint $e^+$ of $e$ and $e$ lies in the emanating direction $v$ from $x$ then a calculation shows that
\[ df_v(x) = \lim_{y \to 0+} \frac{ f(x+yv)-f(x) }{y}  = \frac{f(e^-)-f(e^+)}{\ell(e)} - \frac{1}{2}a(e) \, . 
\]
 
Let $K_\can$ be the canonical divisor on $\Gamma$ given by $K_\can(x)= v(x)-2$ for each $x \in V$. Let $r(e)$ denote the effective resistance between the endpoints of $e$ in $\Gamma \setminus e^o$. This is set to be $\infty$ if $\Gamma \setminus e^o$ is disconnected. Then we define the currents 
\[ \mu_{\can}^{\mathrm{dis}} = -\frac{1}{2}\delta_{K_\can} \, , \quad
\mu_{\can}^{\mathrm{cts}} = \sum_{e \in E} \frac{\d \, y_e}{\ell(e) + r(e)} \, , \quad
\mu_{\can} = \mu_{\can}^{\mathrm{dis}} + \mu_{\can}^{\mathrm{cts}}  \]
in $C(\Gamma)^*$.
We write 
\begin{equation} \label{defFoster}
F(e)=\int_e \mu_{\can}^{\mathrm{cts}}=\frac{\ell(e)}{\ell(e)+r(e)}  
\end{equation}
for each $e \in E$. We have borrowed this notation from \cite{bf}, where $F(e)$ is called the ``Foster coefficient''.
From \cite[Theorem 2.11]{cr} we deduce that $\int_\Gamma \mu_\can = 1$. 

For $x \in \Gamma$ we have that $r(x,\textit{-})$ is piecewise quadratic, hence $r(x,\textit{-}) \in C(\Gamma)$.  It follows from the proof of \cite[Theorem 2.11]{cr} (see in particular Lemma 2.14 of loc. cit.) that
\begin{equation} \label{laplaceresistance}
\frac{1}{2} \Delta_y^{\mathrm{dis}} r(x,y) = \mu_{\can}^{\mathrm{dis}} - \delta_x(y) \quad \textrm{and} \quad \frac{1}{2} \Delta_y^{\mathrm{cts}} r(x,y) = \mu_{\can}^{\mathrm{cts}} \, , 
\end{equation}
so that
\begin{equation} \label{chinburgrumely} \frac{1}{2} \Delta_y r(x,y) = \mu_{\can}(y) - \delta_x(y)  \end{equation}
for all $x \in \Gamma$. 

The following identities will turn out to be useful later. Let $t \colon e \isom [0,\ell(e)]$ denote a parametrization of $e$. Then we have the explicit formula
\begin{equation} \label{explicitreff} r(x,y)=r(x,e^-)\frac{t(y)}{\ell(e)} + r(x,e^+)\frac{\ell(e) - t(y)}{\ell(e)} + F(e)\frac{t(y)(\ell(e)-t(y))}{\ell(e)}   
\end{equation}
for $y \in e$. In particular we find that 
\begin{equation} \label{useful}
dr_v(x,e^+)= \frac{r(x,e^-)-r(x,e^+)}{\ell(e)} + F(e) \, , 
\end{equation}
where $v$ is the emanating direction from $e^+$ corresponding to $e$. 

Finally, let $\nu$ be the measure supported on $V$ with mass $\nu(v)=\sum_{e \in E(v)} F(e)$ at the vertex $v$. Combining (\ref{explicitdeltaf}), (\ref{laplaceresistance}) and (\ref{useful}) we find the identity
\begin{equation} \label{discretelaplaceresistance}
\psi (L_V(r(x,y))) =  2\, (\mu_\can^{\mathrm{dis}}(y)-\delta_x(y))+\nu(y) 
\end{equation}
in $C(\Gamma)^*$, for all $x \in \Gamma$.

\section{Admissible Green's function and epsilon-invariant} \label{sec:admissible}

In this section we recall from \cite{zh} the definition of admissible measure on a polarized metrized graph, and its associated admissible Green's function. Also we introduce the epsilon-invariant, and prove a couple of identities for polarized metrized graphs that we will need later. 

Let $\Gamma$ be a metrized graph, with a given vertex set $V$ and associated edge set $E$. Let $h$ be a positive integer and let $K \in \zz^V$ be an integer-valued divisor of degree $2h-2$ on $\Gamma$. We define Zhang's admissible measure with respect to $K$ to be the element
\begin{equation} \label{measure} \mu = \frac{1}{2h} \left( \delta_K + 2 \, \mu_{\can} \right) 
\end{equation}
of $C(\Gamma)^*$. It is clear that $\int_\Gamma \mu = 1$. We call $h$ the genus of $(\Gamma,K)$.

Let $x \in \Gamma$. Zhang's admissible Green's function relative to $K$ is the element of $C(\Gamma)$  determined by the conditions 
\begin{equation} \label{ZhArGr} \Delta_y \,g_\mu(x,y) = \delta_x(y)-\mu(y) \, , \quad \int_\Gamma g_\mu(x,y) \, \mu(y) = 0  \, . 
\end{equation}
Note the similarity with the definition of the usual Arakelov-Green's function in (\ref{deldelbarg_Ar}) and (\ref{normalization}).
We refer to \cite[Section 3 and Appendix]{zh} for a proof that $g_\mu(x,\textit{-})$ exists in $C(\Gamma)$ for all $x \in \Gamma$, and a discussion of some of its main properties. For example, let $r(x,y)$ be the effective resistance between $x, y \in \Gamma$, then we have the identity 
\begin{equation} \label{gZhandRes}
 r(x,y) = g_\mu(x,x)-2g_\mu(x,y) + g_\mu(y,y) 
\end{equation}
for all $x, y \in \Gamma$. See \cite[Equation (3.5.1)]{zh}.

\begin{prop} \label{defc} There exists a real number $c(\Gamma,K)$ such that
\[  c(\Gamma,K) + g_\mu(x,x) + g_\mu(K,x) = 0 \]
for all $x \in \Gamma$.
\end{prop}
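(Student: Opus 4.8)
The plan is to prove that the single function
\[ F \colon \Gamma \to \rr \, , \qquad F(x) = g_\mu(x,x) + g_\mu(K,x) \]
is constant on $\Gamma$; granting this, one simply sets $c(\Gamma,K) := -F$. Throughout I read $g_\mu(K,x)$ via the $\rr$-bilinear extension of $g_\mu$, i.e.\ $g_\mu(K,x) = \sum_{v \in V} K(v)\, g_\mu(v,x)$, in parallel with the convention used for $\bar g$ in Section~\ref{prelimgraphs}.

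First I would check that $F$ actually lies in $C(\Gamma)$, so that the Laplacian $\Delta$ may be applied to it. For the summand $x \mapsto g_\mu(K,x)$ this is immediate, being a finite $\rr$-linear combination of the functions $g_\mu(v,\cdot) \in C(\Gamma)$. For the diagonal summand $x \mapsto g_\mu(x,x)$ the point is the identity (\ref{gZhandRes}): fixing any $x_0 \in \Gamma$ and rearranging gives
\[ g_\mu(y,y) = r(x_0,y) + 2\, g_\mu(x_0,y) - g_\mu(x_0,x_0) \qquad (y \in \Gamma) \, , \]
and the right-hand side belongs to $C(\Gamma)$ because $r(x_0,\cdot)$ is piecewise quadratic and $g_\mu(x_0,\cdot) \in C(\Gamma)$. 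Hence $F \in C(\Gamma)$.

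Next I would compute $\Delta F$ as a current and show it vanishes. Applying $\Delta_y$ to the displayed formula for $g_\mu(y,y)$ and using (\ref{chinburgrumely}) together with the defining equations (\ref{ZhArGr}) of $g_\mu$, the Dirac contributions at $x_0$ cancel and one obtains
\[ \Delta_y\, g_\mu(y,y) = \bigl(2\mu_\can(y) - 2\delta_{x_0}(y)\bigr) + 2\bigl(\delta_{x_0}(y) - \mu(y)\bigr) = 2\mu_\can(y) - 2\mu(y) \, , \]
which reassuringly no longer involves $x_0$. Separately, linearity of (\ref{ZhArGr}) and $\deg K = 2h-2$ give $\Delta_y\, g_\mu(K,y) = \delta_K(y) - (2h-2)\mu(y)$. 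Finally the definition (\ref{measure}) of $\mu$ rearranges to $2\mu_\can = 2h\,\mu - \delta_K$, so that $2\mu_\can - 2\mu = (2h-2)\mu - \delta_K$; adding the two computations yields $\Delta F = 0$.

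To finish, a function in $C(\Gamma)$ with vanishing Laplacian is affine on each edge and balanced at each vertex, hence harmonic on the connected compact metrized graph $\Gamma$, hence constant by the maximum principle (this is the same input that underlies the uniqueness clause in (\ref{ZhArGr})). Therefore $F$ is constant, and $c(\Gamma,K):=-F$ satisfies the asserted identity. I expect the only step requiring genuine care to be the claim that $x \mapsto g_\mu(x,x)$ is a well-behaved element of $C(\Gamma)$ on which $\Delta$ can be evaluated term by term: identity (\ref{gZhandRes}) is precisely what makes this legitimate, while the rest is formal bookkeeping with currents and the normalization (\ref{measure}).
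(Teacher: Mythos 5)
Your argument is correct, but it is genuinely different from what the paper does: the paper's ``proof'' of Proposition \ref{defc} is a one-line citation of Zhang's Theorem~3.2, whereas you give a direct derivation from facts already recorded in Sections \ref{harmonic}--\ref{sec:admissible}. Your computation checks out: writing $g_\mu(y,y)=r(x_0,y)+2g_\mu(x_0,y)-g_\mu(x_0,x_0)$ via (\ref{gZhandRes}) puts the diagonal function in $C(\Gamma)$, the Dirac terms at $x_0$ cancel so that $\Delta_y\,g_\mu(y,y)=2\mu_\can-2\mu$, linearity of (\ref{ZhArGr}) gives $\Delta_y\,g_\mu(K,y)=\delta_K-(2h-2)\mu$, and (\ref{measure}) in the form $2\mu_\can=2h\,\mu-\delta_K$ makes the sum vanish; harmonicity on a compact connected metrized graph then forces constancy, exactly as in the uniqueness argument for $g_\mu$. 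What your route buys is self-containedness: the constant $c(\Gamma,K)$ is obtained from the quoted identities rather than imported wholesale from \cite{zh}. The one caveat to flag is that your proof leans on (\ref{gZhandRes}), i.e.\ Zhang's equation (3.5.1); within this paper's exposition that identity is stated before Proposition \ref{defc} and may be freely used, but if you intended your argument as a replacement for Zhang's own proof of his Theorem~3.2 you would need to check that (3.5.1) is available independently of that theorem (it is, e.g.\ via the resistance/voltage description of differences $g_\mu(x,\cdot)-g_\mu(x',\cdot)$, but that is not shown here). A cosmetic point: since $C(\Gamma)$ is defined relative to the vertex set $V$, it is cleanest to take your auxiliary point $x_0$ in $V$ (or to remark that enlarging the vertex set is harmless), although the cancellation of the $\delta_{x_0}$-terms shows nothing goes wrong in any case.
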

\begin{proof} This is the content of \cite[Theorem~3.2]{zh}.
\end{proof}

We let the epsilon-invariant of $(\Gamma,K)$ be the real number
\begin{equation} \label{defineeps} \varepsilon(\Gamma,K) = \int_\Gamma g_\mu(y,y)((2h-2)\mu(y)+\delta_K(y)) \, . 
\end{equation}
By  \cite[Theorem~4.4]{zh} one always has $\vareps(\Gamma,K) \geq 0$, and actually $\vareps(\Gamma,K)>0$ unless $\Gamma$ is a point, or the genus of $(\Gamma,K)$ is equal to one.
\begin{prop} \label{epsalt}
We have 
\[ 
 \varepsilon(\Gamma,K) = 4(h-1)(g_\mu(x,x)+g_\mu(K,x)) - g_\mu(K,K)  
\]
for all $x \in \Gamma$.
\end{prop}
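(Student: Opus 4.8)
The plan is to derive the identity from Proposition~\ref{defc} together with the two defining normalizations of $g_\mu$ in (\ref{ZhArGr}) and the fact that $\int_\Gamma \mu = 1$, $\deg K = 2h-2$. First I would use Proposition~\ref{defc}: writing $c = c(\Gamma,K)$, we have $g_\mu(y,y) = -c - g_\mu(K,y)$ for every $y \in \Gamma$, where $g_\mu(K,y) = \sum_{x \in V} K(x) g_\mu(x,y)$ denotes the bilinear extension. Substituting this into the definition (\ref{defineeps}) gives
\[ \varepsilon(\Gamma,K) = \int_\Gamma \bigl(-c - g_\mu(K,y)\bigr)\bigl((2h-2)\mu(y) + \delta_K(y)\bigr) \, . \]

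Next I would treat the constant term. Since $\int_\Gamma \mu = 1$ and $\int_\Gamma \delta_K = \deg K = 2h-2$, the measure $(2h-2)\mu + \delta_K$ has total mass $2(2h-2) = 4(h-1)$, so the contribution of the $-c$ part is $-4(h-1)c$. Applying Proposition~\ref{defc} once more at an arbitrary point $x \in \Gamma$ rewrites this as $4(h-1)\bigl(g_\mu(x,x) + g_\mu(K,x)\bigr)$; this simultaneously produces the first term on the right-hand side of the claimed identity and explains why it is independent of $x$.

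Finally I would treat the remaining term $-\int_\Gamma g_\mu(K,y)\bigl((2h-2)\mu(y) + \delta_K(y)\bigr)$. The normalization $\int_\Gamma g_\mu(x,y)\,\mu(y) = 0$ from (\ref{ZhArGr}), extended bilinearly in the first variable, gives $\int_\Gamma g_\mu(K,y)\,\mu(y) = 0$; and $\int_\Gamma g_\mu(K,y)\,\delta_K(y) = g_\mu(K,K)$ by the bilinear extension in the second variable. Hence this term equals $-g_\mu(K,K)$, and adding the two contributions yields exactly $\varepsilon(\Gamma,K) = 4(h-1)\bigl(g_\mu(x,x) + g_\mu(K,x)\bigr) - g_\mu(K,K)$.

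I do not expect any genuine obstacle: the argument is short, and the only point demanding a little care is the bookkeeping with the bilinear extensions of $g_\mu$ in each variable and the identity $\int_\Gamma \delta_K = \deg K$. One could alternatively package the same computation by pairing the current $(2h-2)\mu + \delta_K$ with the function $y \mapsto g_\mu(y,y)$ and using linearity of $\int_\Gamma$ over $C(\Gamma)^*$, but the route above via Proposition~\ref{defc} is the most direct.
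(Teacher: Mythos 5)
Your proposal is correct and follows exactly the paper's own argument: substitute Proposition~\ref{defc} into the definition (\ref{defineeps}), use the total mass $4(h-1)$ of $(2h-2)\mu+\delta_K$ and the normalization $\int_\Gamma g_\mu(K,y)\,\mu(y)=0$ to get $\varepsilon(\Gamma,K)=-4(h-1)c(\Gamma,K)-g_\mu(K,K)$, then apply Proposition~\ref{defc} once more to eliminate $c(\Gamma,K)$. You merely spell out the bookkeeping that the paper leaves implicit.
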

\begin{proof} From Proposition \ref{defc} we find
\[ \begin{split} \varepsilon(\Gamma,K) & = 
-\int_\Gamma (c(\Gamma,K)+g(K,y))((2h-2)\mu(y) + \delta_K(y)) \\
& = -4(h-1)c(\Gamma,K) - g_\mu(K,K) \, . \end{split} \]
Using Proposition \ref{defc} again to eliminate $c(\Gamma,K)$ we find the required identity.
\end{proof}

Let $x \in \Gamma$. We put
\[ \tau(\Gamma) = \frac{1}{2} \int_\Gamma r(x,y)\, \mu_{\can} (y) \, . \]
By \cite[Lemma 2.16]{cr} this is independent of the choice of $x \in \Gamma$. 

\begin{prop} \label{taualternative}
For each $x \in \Gamma$ we have
\[ 2 \,r(x,K)  +4\,\tau(\Gamma) = 4h \,g_\mu(x,x)+\vareps(\Gamma,K)  \, . \]
\end{prop}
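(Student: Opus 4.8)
The plan is to express both $r(x,K)$ and $\tau(\Gamma)$ purely in terms of the admissible Green's function $g_\mu$ via the resistance identity (\ref{gZhandRes}), and then to check that everything collapses onto the single scalar $I := \int_\Gamma g_\mu(y,y)\,\mu(y)$, which Proposition \ref{defc} identifies with $g_\mu(x,x)+g_\mu(K,x)$ (and which is therefore independent of $x$). Throughout I would freely use that $g_\mu$ is symmetric, that $\int_\Gamma \mu_{\can} = 1$, that $\deg K = 2h-2$, and that (\ref{measure}) can be rewritten as $2\,\mu_{\can} = 2h\,\mu - \delta_K$.

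First I would compute $4\tau(\Gamma)$. Substituting $r(x,y)=g_\mu(x,x)-2g_\mu(x,y)+g_\mu(y,y)$ into $\tau(\Gamma)=\tfrac12\int_\Gamma r(x,y)\,\mu_{\can}(y)$ and using $\int_\Gamma\mu_{\can}=1$, the constant term contributes $g_\mu(x,x)$. In the two remaining integrals I would replace $\mu_{\can}$ by $\tfrac12(2h\,\mu-\delta_K)$; the normalization $\int_\Gamma g_\mu(x,y)\,\mu(y)=0$ kills the $\mu$-parts, leaving $\int_\Gamma g_\mu(x,y)\,\mu_{\can}(y)=-\tfrac12 g_\mu(x,K)$ and $\int_\Gamma g_\mu(y,y)\,\mu_{\can}(y)=hI-\tfrac12\int_\Gamma g_\mu(y,y)\,\delta_K(y)$. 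The last integral I would rewrite, via the definition (\ref{defineeps}) of the epsilon-invariant, as $\vareps(\Gamma,K)-(2h-2)I$. Collecting terms and using $g_\mu(x,x)+g_\mu(x,K)=I$ gives $2\tau(\Gamma)=2hI-\tfrac12\vareps(\Gamma,K)$, i.e.\ $4\tau(\Gamma)=4hI-\vareps(\Gamma,K)$. In the same way I would expand $r(x,K)=\sum_{v\in V}K(v)\,r(x,v)$ with (\ref{gZhandRes}), use $\deg K=2h-2$ and $\sum_v K(v)g_\mu(v,v)=\vareps(\Gamma,K)-(2h-2)I$, and substitute $g_\mu(x,x)=I-g_\mu(x,K)$; this collapses to $r(x,K)=\vareps(\Gamma,K)-2h\,g_\mu(x,K)$.

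It then remains to evaluate $I$ and to add. By Proposition \ref{defc} one has $g_\mu(y,y)=-c(\Gamma,K)-g_\mu(K,y)$; integrating against $\mu$ and using $\int_\Gamma g_\mu(v,y)\,\mu(y)=0$ for each vertex $v$ gives $I=-c(\Gamma,K)$, and applying Proposition \ref{defc} once more, $I=g_\mu(x,x)+g_\mu(K,x)$. Hence
\[ 2r(x,K)+4\tau(\Gamma)=\bigl(2\vareps(\Gamma,K)-4h\,g_\mu(x,K)\bigr)+\bigl(4hI-\vareps(\Gamma,K)\bigr)=\vareps(\Gamma,K)+4h\bigl(I-g_\mu(x,K)\bigr)=4h\,g_\mu(x,x)+\vareps(\Gamma,K), \]
which is the assertion. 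I do not expect a genuine obstacle here: the whole argument is a bookkeeping exercise in the linear relations (\ref{gZhandRes}), (\ref{measure}) and (\ref{defineeps}) together with Proposition \ref{defc}. The only point that requires a moment's care is recognizing that all the auxiliary integrals reduce to the single quantity $I=-c(\Gamma,K)$; once that is observed, the cancellations are forced.
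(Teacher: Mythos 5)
Your argument is correct; I checked the two intermediate identities $4\,\tau(\Gamma)=4hI-\vareps(\Gamma,K)$ and $r(x,K)=\vareps(\Gamma,K)-2h\,g_\mu(x,K)$, with $I=-c(\Gamma,K)=g_\mu(x,x)+g_\mu(K,x)$, and they do combine to give the claim for every $x\in\Gamma$. Your route differs from the paper's in one substantive respect. The paper takes the identity $\vareps(\Gamma,K)=2h\,g_\mu(x,K)+r(x,K)$ (its equation (\ref{mori})) as an external input, citing Moriwaki, and then works with double integrals: it shows $c(\Gamma,K)=-\tfrac12\int_{\Gamma\times\Gamma}r(x,y)\,\mu(x)\,\mu(y)$ and $\vareps(\Gamma,K)=\int_{\Gamma\times\Gamma}r(x,y)\,\delta_K(y)\,\mu(x)$, whence $\vareps(\Gamma,K)+4h\,c(\Gamma,K)=-2\int_{\Gamma\times\Gamma}r\,\mu\,\mu_{\can}=-4\,\tau(\Gamma)$, and concludes by eliminating $c(\Gamma,K)$ via Proposition \ref{defc}. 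Your second step is in effect a self-contained derivation of that Moriwaki identity from the definition (\ref{defineeps}) of $\vareps$ together with (\ref{gZhandRes}) and Proposition \ref{defc}, and your first step evaluates $\tau(\Gamma)$ directly as a single integral against $\mu_{\can}=\tfrac12(2h\,\mu-\delta_K)$ instead of passing through $c(\Gamma,K)$ and the double-integral bookkeeping. What your version buys is independence from the citation and a transparent cancellation pattern, with everything reduced to the single constant $I$; what the paper's version buys is brevity, since it can simply quote the identity from the literature. The only implicit ingredient you use beyond what the paper states is the symmetry $g_\mu(x,y)=g_\mu(y,x)$, which follows from (\ref{gZhandRes}) and the symmetry of $r$ (and is likewise used tacitly in the paper), so there is no gap.
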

\begin{proof} By \cite[Lemma 4.1]{mosharp} we have
\begin{equation} \label{mori} \varepsilon(\Gamma,K) = 2h\, g_\mu(x,K) + r(x,K) 
\end{equation}
for all $x \in \Gamma$. 
Combining Proposition \ref{defc} and equation (\ref{mori}) we find
\[ \begin{split} 2\,r(x,K)  & = 
2 \,\vareps(\Gamma,K)-4h \, g_\mu(x,K) \\
& = 4h \,g_\mu(x,x)+2\,\vareps(\Gamma,K) + 4h\,c(\Gamma,K)  \end{split} \]
for all $x \in \Gamma$.
We are done once we prove that $4h\,c(\Gamma,K)=-\vareps(\Gamma,K)-4\,\tau(\Gamma)$.
Proposition \ref{defc} implies
\[ c(\Gamma,K) = - \int_\Gamma g_\mu(x,x)\,\mu(x) \, . \]
From (\ref{gZhandRes}) we then readily obtain
\[ c(\Gamma,K) = -\frac{1}{2} \int_{\Gamma \times \Gamma} r(x,y)\,\mu(x)\,\mu(y) \, . \]
Equation (\ref{mori}) gives
\[ \varepsilon(\Gamma,K) = \int_{\Gamma \times \Gamma} r(x,y) \,\delta_K(y)\, \mu(x) \, . \]
We thus find
\[ \begin{split} 
\varepsilon(\Gamma,K) + 4h \,c(\Gamma,K) & = \int_{\Gamma \times \Gamma} r(x,y)\, \mu(x) \, (\delta_K(y) -2h\,\mu(y)) \\
& = -2 \int_{\Gamma \times \Gamma} r(x,y) \, \mu(x) \, \mu_{\can}(y) = -4\,\tau(\Gamma) \, . 
\end{split}  \]
This proves the proposition.
\end{proof}
For each $e \in E$ and $x \in \Gamma$ we put
\[ \tau^{\mathrm{cts}}(\Gamma,x,e) = \frac{1}{2}  \int_e r(x,y) \,\mu_{\can}^{\mathrm{cts}}(y)  \, . \]
Next we put for $x \in \Gamma$
\[ \tau^{\mathrm{cts}}(\Gamma,x) = \sum_{e \in E} \tau^{\mathrm{cts}}(\Gamma,x,e)=\frac{1}{2} \int_\Gamma r(x,y)\, \mu_{\can}^{\mathrm{cts}}(y) \, . \]
Furthermore we let
\begin{equation} \label{eta} \eta(\Gamma,e) = \frac{1}{3}F(e)^2\ell(e) \, , \quad \eta(\Gamma)= \sum_{e \in E} \eta(\Gamma,e) = \frac{1}{3} \sum_{e \in E} F(e)^2\ell(e) \, ,
\end{equation}
with $F(e)$ the Foster coefficient of $e$ as in Section \ref{harmonic}.
Recall from Section \ref{harmonic} that we denote by $\nu$ the discrete measure supported on $V$ with mass $\nu(v)=\sum_{e \in E(v)} F(e)$ at the vertex $v$.
\begin{prop} \label{tauandeta} Let $x \in \Gamma$. Then the identity
\[ 4 \,\tau^{\mathrm{cts}}(\Gamma,x) = \eta(\Gamma) + \int_\Gamma r(x,y) \,\nu (y)   \]
holds.
\end{prop}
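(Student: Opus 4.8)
The plan is to compute both sides by decomposing over the edges $e \in E$, reducing everything to the explicit formula (\ref{explicitreff}) for the effective resistance $r(x,y)$ along an edge. First I would fix $x \in \Gamma$ and an edge $e \in E$ with a parametrization $t \colon e \isom [0,\ell(e)]$. On $e$ the continuous canonical measure is $\mu_\can^{\mathrm{cts}}|_e = \d y_e/(\ell(e)+r(e))$, so by definition $4\,\tau^{\mathrm{cts}}(\Gamma,x,e) = \frac{2}{\ell(e)+r(e)}\int_e r(x,y)\,\d y_e$. I would substitute (\ref{explicitreff}) for $r(x,y)$ as a quadratic polynomial in $s=t(y)$ on $[0,\ell(e)]$ and integrate term by term, using $\int_0^{\ell(e)} s\,\d s = \ell(e)^2/2$, $\int_0^{\ell(e)}(\ell(e)-s)\,\d s = \ell(e)^2/2$, and $\int_0^{\ell(e)} s(\ell(e)-s)\,\d s = \ell(e)^3/6$. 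This yields
\[
\int_e r(x,y)\,\d y_e = \frac{\ell(e)}{2}\bigl(r(x,e^-)+r(x,e^+)\bigr) + \frac{F(e)\,\ell(e)^2}{6} \, ,
\]
and hence, using $F(e) = \ell(e)/(\ell(e)+r(e))$ from (\ref{defFoster}),
\[
4\,\tau^{\mathrm{cts}}(\Gamma,x,e) = \frac{F(e)}{\ell(e)}\bigl(r(x,e^-)+r(x,e^+)\bigr)\ell(e)\cdot\frac{1}{\ell(e)}\cdot\ell(e) \cdot \tfrac12 \cdot 2 + \frac{1}{3}F(e)^2\ell(e) \, .
\]
More cleanly: $4\,\tau^{\mathrm{cts}}(\Gamma,x,e) = F(e)\bigl(r(x,e^-)+r(x,e^+)\bigr) + \eta(\Gamma,e)$, recalling $\eta(\Gamma,e) = \frac13 F(e)^2\ell(e)$ from (\ref{eta}).

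Next I would sum over all $e \in E$. The $\eta(\Gamma,e)$ terms sum to $\eta(\Gamma)$ by (\ref{eta}). For the first terms, I would reorganize the sum $\sum_{e\in E} F(e)\bigl(r(x,e^-)+r(x,e^+)\bigr)$ by collecting, for each vertex $v \in V$, the contributions of the edges incident to $v$: each edge $e$ contributes $F(e)\,r(x,v)$ once for every endpoint of $e$ equal to $v$ (twice if $e$ is a loop at $v$, which is consistent with $v \in E(v)$ being counted with the correct multiplicity as an emanating direction). This gives $\sum_{e\in E} F(e)\bigl(r(x,e^-)+r(x,e^+)\bigr) = \sum_{v\in V} r(x,v)\sum_{e \in E(v)} F(e) = \sum_{v \in V} r(x,v)\,\nu(v) = \int_\Gamma r(x,y)\,\nu(y)$, the last equality because $\nu$ is by definition the discrete measure supported on $V$ with mass $\nu(v) = \sum_{e\in E(v)}F(e)$. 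Combining, $4\,\tau^{\mathrm{cts}}(\Gamma,x) = \sum_{e\in E} 4\,\tau^{\mathrm{cts}}(\Gamma,x,e) = \int_\Gamma r(x,y)\,\nu(y) + \eta(\Gamma)$, which is the claimed identity.

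The only genuinely delicate point is the bookkeeping in the edge-to-vertex reorganization, in particular making sure loops and the convention for $E(v)$ (emanating directions versus incident edge-ends) are handled consistently, so that the total weight picked up at $v$ is exactly $\sum_{e\in E(v)}F(e)$ and matches the definition of $\nu$. Once that is pinned down the rest is the routine polynomial integration above. I would also note that the integral $\int_e r(x,y)\,\d y_e$ is insensitive to whether $x$ lies on $e$ or not, since (\ref{explicitreff}) is valid for all $y\in e$ regardless of the position of $x$; this is what makes the computation uniform in $x$.
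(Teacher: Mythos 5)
Your computation is the paper's own proof: the same edge-by-edge integration of (\ref{explicitreff}) against $\mu_\can^{\mathrm{cts}}$, giving $4\,\tau^{\mathrm{cts}}(\Gamma,x,e)=F(e)\left(r(x,e^-)+r(x,e^+)\right)+\eta(\Gamma,e)$, followed by the same regrouping of the endpoint terms over vertices, with loops counted twice in accordance with the emanating-direction convention in the definition of $\nu$.

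The one flaw is your closing remark that (\ref{explicitreff}) ``is valid for all $y\in e$ regardless of the position of $x$''. That is false when $x$ lies in the interior of $e$: there $r(x,\cdot)$ restricted to $e$ is only piecewise quadratic, with a kink at $x$. For example, on a circle of circumference $\ell$ with the single vertex $v$ and with $x$ interior at distance $a$ from $v$, the formula gives the value $2a(\ell-a)/\ell$ at $y=x$ instead of $0$. In fact the identity of the proposition itself fails for such $x$ if $\eta$ and $\nu$ are kept relative to the fixed vertex set: in that example $4\,\tau^{\mathrm{cts}}(\Gamma,x)=\ell/3$ for every $x$, while the right-hand side equals $\ell/3+2a(\ell-a)/\ell$. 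So the statement must be read, as it is used later in the proof of Proposition \ref{selfdelignepairingLear}, with $x$ a vertex (equivalently, with $x$ adjoined to the vertex set, after which $F$, $\eta$ and $\nu$ have to be recomputed). Your main computation is correct precisely in that situation, i.e.\ when $x$ lies in no open edge, so the proof stands once the erroneous uniformity claim is dropped.
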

\begin{proof} Let $e \in E$. Recall that $\mu_\can^{\mathrm{cts}}(y)=F(e) \,\d \,y_e/\ell(e)$ on $e$. Using (\ref{explicitreff}), putting $u(y)=t(y)/\ell(e)$  we compute 
\begin{equation} \label{onlye} \begin{split}
4\,\tau^{\mathrm{cts}}(\Gamma,x,e) & = 2 \int_e r(x,y) \,\mu_\can^{\mathrm{cts}}(y) \\  & = 2\,F(e)\int_0^1\left( r(x,e^-)u + r(x,e^+)(1 - u) + F(e)\ell(e)u(1-u) \right) \, \d u \\
& =    F(e) \cdot \left(r(x,e^-)+r(x,e^+) + \frac{F(e)\ell(e)}{3} \right) \, .  
\end{split}
\end{equation}
It follows that
\[ 4\,\tau^{\mathrm{cts}}(\Gamma,x) = \eta(\Gamma) + \sum_{e \in E} F(e) \cdot (r(x,e^-)+r(x,e^+)) \, .  \]
Observing that
\[ \sum_{e \in E} F(e) \cdot (r(x,e^-)+r(x,e^+)) = 
\sum_{y \in V} r(x,y) \sum_{e \in E(y)} F(e) = \int_\Gamma r(x,y)\, \nu (y) \]
we obtain the proposition.
\end{proof}

\section{Nodal curves, semistable curves, stable curves} \label{prelimsemistable} 

Our basic reference here is \cite[Chapter~X]{acg}. In this paper, a complex algebraic curve $C$ is said to be nodal if $C$ is connected, reduced, projective, and each singular point of $C$ is an ordinary double point.  A nodal curve $C$ is called semistable (stable) if any $\PP^1$ contained in $C$ meets the rest of $C$ in at least two (three) points. For $C$ a nodal curve, the dual graph $G$ associated to $C$ is the finite connected graph whose vertex set $V(G)$ consists of the irreducible components of $C$, and whose edge set $E(G)$ consists of the singular points of $C$, with vertex assignment map given by sending a singular point $e \in E(G)$ to the set of irreducible components of $C$ that contain $e$.

A map of complex algebraic varieties $\pi \colon \Xbar \to \Ybar$ is called a nodal (semistable, stable) curve if $\pi$ is proper, flat, and all fibers of $\pi$ are nodal (semistable, stable) curves. For a nodal curve $\pi \colon \Xbar \to \Ybar$ we write $\mathrm{Sm}(\pi)$ for the locus where $\pi$ is smooth, and $\mathrm{Sing}(\pi)$ for the locus where $\pi$ is not smooth. The singular locus $\mathrm{Sing}(\pi)$ of $\pi$ is a closed subvariety of $\Xbar$ which is finite unramified over $\Ybar$. We will often work with nodal curves $\Xbar \to \Delta^n$ over the polydisc $\Delta^n$ of dimension $n$. By definition this is to be the base change, in the category of analytic spaces, along a map $f \colon \Delta^n \to \Ybar$, of a nodal curve over a complex algebraic variety $\Ybar$.

Let $\pi \colon \Xbar \to \Ybar$ be a nodal curve, and let $y \in \Ybar$ be a point. Then following \cite[Section~2]{ho} we associate to $y$ a canonical labelled graph $(G_y,\ell_y)$ as follows. The underlying graph $G_y$ is the dual graph of the nodal curve $\Xbar_y$. Let $\oo_{\Ybar,y}$ be the local ring for the holomorphic structure sheaf of $\Ybar$ at $y$. The value set of $\ell_y$ is then $\oo_{\Ybar,y} /\oo_{\Ybar,y}^*$, and the labelling $\ell_y \colon \E(G_y) \to \oo_{\Ybar,y}/\oo_{\Ybar,y}^*$ is given by the following procedure. 

Let $e \in \E(G_y)$ be a singular point of $\Xbar_y$. The local structure of nodal curves in the analytic category (cf. \cite[Proposition X.2.1]{acg}) yields that $e$ on $\Xbar$ has a neighborhood which is isomorphic, as an analytic space over $\Ybar$, to a neighborhood of $(0,y)$ in the analytic subspace of $\cc^2 \times \Ybar$ with equation $xy=a$, for some $a\in \oo_{\Ybar,y}$. The function $a$ is well-defined up to units in $\oo_{\Ybar,y}$. We set $\ell_y(e)$ to be the class of $a$ in $\oo_{\Ybar,y}/\oo_{\Ybar,y}^*$. If $\oo_{\Ybar,y}$ is a discrete valuation ring, and $a \neq 0$, then we usually think of $\ell_y(e) \in \zz_{\geq 0}$ as the normalized discrete valuation of $a$. In particular, the graph $G_y$ is then canonically weighted.

As is almost clear from the definitions, the canonical labelled graph is functorial with respect to base change.
In particular we can speak about the canonical labelled graphs associated to the fibers of a nodal curve $\Xbar \to \Delta^n$ over a polydisc. When $\pi \colon \Xbar \to \Delta$ is a nodal curve over the unit disc, the canonical weighted graph behaves well with respect to minimal desingularization of $\Xbar$, as is expressed by the next proposition.
\begin{prop} \label{minimaldesing} Let $\Xbar \to \Delta$ be a nodal curve, assumed to be smooth over $\Delta^*$. Let $\tilde{\Xbar} \to \Xbar$ be the minimal desingularization of $\Xbar$. Let $G$, $\tilde{G}$ be the canonically weighted dual graphs of the central fibers $\Xbar_0$ resp.\ $\tilde{\Xbar}_0$. Then the associated metrized graphs of $G$ and $\tilde{G}$ are canonically isometric.
\end{prop}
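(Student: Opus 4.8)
The plan is to work locally at each node of $\Xbar_0$ and to trace through explicitly what the minimal desingularization $\tilde{\Xbar} \to \Xbar$ does. Away from $\mathrm{Sing}(\pi)$ the map is an isomorphism, so the only issue is the local contribution of each node. Fix a singular point $e$ of $\Xbar_0$ and recall from Section \ref{prelimsemistable} that, since $\oo_{\Delta,0}$ is a discrete valuation ring, $e$ has an analytic neighborhood in $\Xbar$ of the form $\{xy = t^m\} \subset \cc^2 \times \Delta$ for a uniquely determined integer $m = \ell_0(e) \geq 1$, where $t$ is the coordinate on $\Delta$. The total space of this local model is smooth precisely when $m = 1$; for $m \geq 2$ it is the $A_{m-1}$-surface singularity, and its minimal resolution is classically known: it replaces the point by a chain of $m-1$ rational $(-2)$-curves, so that the special fiber, which locally was a union of two branches meeting at $e$, becomes a chain of $m+1$ (i.e.\ $2 + (m-1)$) components — the two original branches together with the $m-1$ exceptional curves — meeting transversally in $m$ nodes arranged in a line. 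Each of these $m$ new nodes is a node of $\tilde{\Xbar}_0$ on the \emph{smooth} surface $\tilde{\Xbar}$, hence has local equation $\{xy = t\}$ and therefore weight $1$ in $\tilde{G}$.

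The combinatorial upshot is: passing from $G$ to $\tilde{G}$, each edge $e \in E(G)$ of weight $\ell_0(e) = m$ is subdivided into a path of $m$ edges, each of weight $1$, by inserting $m-1$ new vertices of valence two. Now I invoke the construction of the associated metrized graph from Section \ref{prelimgraphs}: the metrized graph $\Gamma$ of a weighted graph is obtained by gluing, for each edge $e$, a segment $[0,\ell(e)]$ along the vertex assignment map. Subdividing an edge of length $m$ into $m$ unit-length edges and reassembling produces, as a metric space, exactly the same segment of length $m$ — the newly inserted valence-two vertices are interior points of that segment and do not change the underlying metric. Doing this simultaneously at every node, and using that $\tilde{\Xbar} \to \Xbar$ is an isomorphism over $\mathrm{Sm}(\pi) \supset \Xbar_0 \setminus \mathrm{Sing}(\pi)$ so that the vertex sets $V(G)$ and the non-subdivided part of $V(\tilde{G})$ match up, yields a canonical isometry between the metrized graph of $G$ and that of $\tilde{G}$.

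The one point that requires genuine input rather than bookkeeping is the identification of the minimal desingularization of the local model $\{xy = t^m\}$ with the $A_{m-1}$-resolution and, in particular, the verification that the length parameter is respected, i.e.\ that the $m-1$ exceptional $\PP^1$'s each contribute a \emph{weight-one} edge so that the total length adds up to $m$ rather than to some other value. This is where I would be most careful: one must check that minimality (no $(-1)$-curves in the fibers) forces exactly the $A_{m-1}$ chain and not some partial resolution, and that each successive blow-up of the ambient threefold, restricted to the surface/central-fiber picture, introduces one node of weight one. This is a standard computation with the $A_n$ singularities (see \cite[Chapter~X]{acg} for the local structure of nodal families), but it is the step on which the numerical matching hinges. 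Once it is in place, the passage to metrized graphs and the assembly of the global isometry are formal, and functoriality of the canonical labelled graph under base change (already noted in Section \ref{prelimsemistable}) guarantees the isometry is canonical.
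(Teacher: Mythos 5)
Your proposal is correct and follows essentially the same route as the paper: reduce to the local model $xy=t^m$ at each node, observe that the minimal desingularization (the $A_{m-1}$-resolution, which the paper cites via \cite[Lemma~1.12]{dm}) replaces the node by a chain of $m-1$ rational curves, so the weight-$m$ edge is subdivided into $m$ unit edges, leaving the metrized graph unchanged. The only difference is that you invoke the classical $A_{m-1}$-surface resolution directly where the paper cites Deligne--Mumford; the content is identical.
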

\begin{proof} Let $t$ be the coordinate on the unit disc. Let $e \in E(G)$ be a singular point of $\Xbar_0$. A local equation of $\Xbar$ around $e$ is then $xy-t^n$ for some $n \in \zz_{>0}$, and for instance the proof of \cite[Lemma~1.12]{dm} shows that the local minimal desingularization of $\Xbar$ at $e$ replaces $e$ by a chain of $n-1$ projective lines. That is, upon passing from $G$ to $\tilde{G}$, the edge $e$ of length $n$ in the metrized graph associated to $G$ is subdivided into $n$ segments of unit length.
\end{proof}

\section{Deligne pairing} \label{sec:delignepairing}

Let $\Xbar$ and $\Ybar$ be complex algebraic varieties and let $\pi \colon \Xbar \to \Ybar$ be a nodal curve. Then following \cite[Section XIII.5]{acg} and \cite{de} we have a canonical bi-multiplicative pairing for line bundles $L, M$ on $\Xbar$, resulting in a line bundle $\pair{L,M}$ on $\Ybar$. 
Locally on an open set $U \subset \Ybar$ the line bundle $\pair{L,M}$ is generated by symbols $\pair{\ell,m}$ with $\ell$ a nonzero rational section of $L$ on $\pi^{-1}U$ and $m$ a nonzero rational section of $M$ on $\pi^{-1}U$, such that the divisors of $\ell, m$ on $\Xbar$ have disjoint support. We have the relations
\begin{equation} \label{relations} \pair{\ell, fm } = f(\divisor \ell)\pair{\ell,m} \, , \quad
\pair{f\ell,m} = f(\divisor m) \pair{\ell, m}  
\end{equation}
for rational functions $f$ on $\Xbar$. Here, the function $f(\divisor \ell)$ should be interpreted as coming from a norm: when $D$ is an effective relative Cartier divisor on $\Xbar$, then we put $f(D)=\mathrm{Nm}_{D/\Ybar}(f)$. The Weil reciprocity law $f(\divisor g)=g(\divisor f)$ shows that this construction by generators and relations indeed gives a line bundle on $\Ybar$. 

The formation of the Deligne pairing of two line bundles $L, M$ on $\Xbar$ is compatible with base change. Assume that $P$ is a section of $\pi$ with image contained in the smooth locus $\Sm(\pi)$ of $\pi$, and let $N$ be a line bundle on $\Ybar$. Then we have canonical isomorphisms
\begin{equation} \label{canisoms} \pair{M,\pi^*N} \isom N^{\otimes \deg M} \, , \quad \pair{L,M} \isom \pair{M,L} \, , \quad \pair{\oo(P),M} \isom P^* M 
\end{equation}
of line bundles on $\Ybar$. Let $\omega$ be the relative dualizing sheaf of $\pi$. Then $\omega$  is a line bundle on $\Xbar$ by \cite[Section~1]{dm}, and we have a canonical adjunction isomorphism
\begin{equation} \label{canonicaladj} \pair{\oo(P),\oo(P)} \isom \pair{\oo(P),\omega}^{\otimes -1} \, . 
\end{equation}

Assume now that the family $\pi \colon \Xbar \to \Ybar$ is a semistable curve of genus $h>0$. Then $\Ybar$ allows a canonical classiyfing map $J \colon \Ybar \to \overline{\mm}_h$ to the moduli stack of stable curves of genus~$h$. (If $h=1$, we assume that a zero-section of the family is given so that we have a classifying map to $\overline{\mm}_{1,1}$). The discriminant line bundle $\delta$ on $\Ybar$  is defined to be the pullback, along $J$, of the line bundle associated to the boundary divisor of $\mm_h$ in $\overline{\mm}_h$. Mumford's functorial Riemann-Roch \cite[Theorem 5.10]{mu} \cite[Th\'eor\`eme~2.1]{mb} gives an isomorphism
\begin{equation} \label{mumfordnorm} \mu \colon (\det \pi_* \omega)^{\otimes 12} \isom \pair{\omega,\omega} \otimes \delta 
\end{equation}
of line bundles on $\Ybar$, canonical up to a sign. We refer to $\mu$ as the Mumford isomorphism. 

\section{Metrization of the Deligne pairing, and the delta-invariant} \label{sec:metrization}

Let $Y$ be a smooth complex algebraic variety  and consider a smooth proper curve  $\pi \colon X \to Y$ over $Y$ of positive genus. 
Let $L, M$ be line bundles on $X$, both equipped with $C^\infty$ hermitian metrics. Then by \cite[Section 6]{de} the Deligne pairing $\pair{L,M}$ has a canonical structure of $C^\infty$ hermitian line bundle, which can be given explicitly as follows. Let $\ell$, $m$ be non-zero rational sections of $L$, $M$ resp.\ with disjoint support. Then we have, as functions on $Y$
\begin{equation} \label{defmetricpairing}
\log \| \pair{\ell,m} \|  = (\log\|m\|) [\divisor \ell] + \int_\pi \log \|\ell\| \ceeone(M)
\, . 
\end{equation}
We have a symmetry relation $\|\pair{\ell,m}\|=\|\pair{m,\ell}\|$, cf. \cite[Section~6.3]{de}. It is clear from (\ref{defmetricpairing}) that for rational functions $f$ on $\Xbar$ we have
\[ \log\|\pair{\ell,fm}\| = \log \|\pair{\ell,m}\| + (\log|f|)[\divisor \ell] \, ,  \]
showing that the Deligne norm $\|\cdot \|$ is compatible with the relations (\ref{relations}). Finally, the canonical isomorphisms
\begin{equation} \label{delignenorm} \pair{M,\pi^*N} \isom N^{\otimes \deg M} \, , \quad \pair{L,M} \isom \pair{M,L}  
\end{equation}
from (\ref{canisoms}) are isometries, when $L, M$ and $N$ are equipped with $C^\infty$ hermitian metrics.

Let $P \colon Y \to X$ be a section of $\pi$, and let $\omega$ denote the relative dualizing sheaf of~$\pi$. We denote by $\opar$ resp.\ $\omar$ the $C^\infty$ hermitian line bundles on $X$ that we obtain by putting fiberwise the Arakelov metric on $\oo(P)$ resp.\ $\omega$. We recall that both $\opar$ and $\omar$ are fiberwise admissible. Each of the Deligne pairings $\pair{\oo(P),\omega}$, $\pair{\omega,\omega}$ and $\pair{\oo(P),\oo(P)}$ now carries an induced structure of $C^\infty$ hermitian line bundle on $Y$. If there is a second section $Q$ of $X \to Y$, then so does the Deligne pairing $\pair{\oo(P),\oo(Q)}$. We denote these $C^\infty$ hermitian line bundles on $Y$ respectively by
\begin{equation} \label{metrics} \pair{\opar,\omar} \, , \,\, \pair{\omar,\omar} \, , \,\, \pair{\opar,\opar} \,\, \textrm{and} \,\,  \pair{\opar,\oo(Q)_\Ar} \, . 
\end{equation} 
Now let $M$ be a fiberwise admissible $C^\infty$ hermitian line bundle on $X$. Denote by $1_P$ the canonical rational section of $\oo(P)$. Then for any nonzero rational section $m$ of $M$ with support away from $P$ we have
\[ \log \|P^*m\| = (\log \|m\|)[\divisor 1_P] \, . \]
From (\ref{defmetricpairing})  we obtain on the other hand that
\[ \log \| \pair{1_P,m} \| = (\log \|m\|)[\divisor 1_P] + \int_\pi \log \|1_P\| \ceeone(M) = (\log \|m\|)[\divisor 1_P]  \]
by the normalization condition (\ref{normalization}).
It follows that the canonical isomorphism $\pair{\opar,M} \isom P^* M $ from (\ref{canisoms}) is an isometry. For example, since $\omar$ is fiberwise admissible we find that the isomorphism $\pair{\opar,\omega_\Ar} \isom P^* \omar$ is an isometry. Further, by definition of the Arakelov metric, the adjunction isomorphism (\ref{canonicaladj}) is an isometry. We conclude that we have a canonical isometry
\begin{equation} \label{adj}
\pair{\opar,\opar} \isom \pair{\opar,\omar}^{\otimes -1}  
\end{equation}
of $C^\infty$ hermitian line bundles on $Y$.

Mumford's functorial Riemann-Roch (\ref{mumfordnorm}) gives a canonical isomorphism, up to sign
\[ \mu_Y \colon (\det \pi_*\omega)^{\otimes 12} \isom \pair{\omega,\omega} \]
of line bundles on $Y$. The Faltings delta-invariant is defined to be the minus logarithm of the norm of $\mu_Y$, 
\begin{equation} \label{deffaltingsdelta} \delta_F = -\log\|\mu_Y\| \, ,
\end{equation}
where $\pair{\omega,\omega}$ is endowed with the Deligne metric derived from $\omar$, and $\det \pi_* \omega$ is endowed with the Hodge metric $\|\cdot\|_H$ induced from (\ref{defineinnerproduct}).

Let $j \colon J \to Y$ be the jacobian fibration associated to $X \to Y$. 
Then $J \times_Y J$ carries a canonical Poincar\'e bundle $\pp$, together with a rigidification at the origin, and $\pp$ carries a canonical hermitian metric, compatible with the rigidification. We denote by $\bb$ the $C^\infty$ hermitian line bundle on $J$ obtained by restricting $\pp$ to the diagonal.

Suppose that $E$, $F$ are divisors on $X$ of relative degree zero over $Y$. Then we obtain a section $\eta$ of $J \times_Y J \to Y$ given by sending $y \in Y$ to the classes of $E_y$, $F_y$ in $\Jac(X_y)$. We have a canonical isometry (see for example \cite[Corollaire 4.14.1]{mp})
\[ \eta^*\pp \isom \pair{\oo(E)_\Ar,\oo(F)_\Ar}^{\otimes -1} \, .   \]
In particular, for the section $\nu \colon Y \to J$ associated to the divisor $E$ we have a canonical isometry
\begin{equation} \label{delignepoinc} \nu^*\bb \isom \pair{\oo(E)_\Ar,\oo(E)_\Ar}^{\otimes -1}   \, .
\end{equation}
Assume that $X \to Y$ extends into a nodal curve $\Xbar \to \Ybar$, where $\Ybar$ is a smooth complex algebraic variety and where $\Ybar \setminus Y$ is a normal crossings divisor on $Y$. Then the section $\nu$ is a normal function section (cf. \cite[Definition 5.2]{hain_normal}) of the family of jacobians $J \to Y$.

\section{Variations of mixed Hodge structures}  \label{sec:var_mhs}

The purpose of this section is to recall a few notions and results related to graded-polarized variations of mixed Hodge structures, in particular the Nilpotent Orbit Theorem from \cite{pearlhiggs}.  In the next section we then make this Nilpotent Orbit Theorem explicit for families of curves with nodal degeneration, in order to obtain precise asymptotic formulae for the period and Abel-Jacobi map in such families. For the notion of a graded-polarized variation of mixed Hodge structures over a complex manifold we refer to \cite{pearlhiggs} and \cite[Chapter~14.4]{ps}. 

Let $V$ be a $\qq$-vector space, and $N$ a nilpotent endomorphism of $V$. There is then a unique increasing filtration $L_\bullet$ of $V$ such that
\begin{enumerate}
\item for each $i \in \zz$ one has $NL_i \subseteq L_{i-2}$,
\item for each $i \in \nn$ one has that $N^i$ induces an isomorphism $ N^i \colon \Gr_i^L V \isom \Gr_{-i}^L V $ of vector spaces.
\end{enumerate} 
Assume that $V$ is moreover equipped with a finite increasing filtration $W_\bullet$, such that $N$ preserves $W_\bullet$. An increasing filtration  $M_\bullet$ of $V$ is then called a weight filtration for $N$ relative to $W_\bullet$ if:
\begin{enumerate}
\item for each $i \in \zz$ one has $NM_i \subseteq M_{i-2}$,
\item for each $k \in \zz$ and each $i \in \mathbb{N}$ one has that $N^i$ induces an isomorphism
\[ N^i \colon \Gr_{k+i}^M \Gr_k^W V \isom \Gr_{k-i}^M \Gr_k^W V \]
of vector spaces.
\end{enumerate}
If a weight filtration for $N$ relative to $W_\bullet$ exists on $V$, it is unique. 

Let $(V,W_\bullet,F^\bullet,Q_\bullet)$ be a graded-polarized mixed Hodge structure on $V$, with weight filtration $W_\bullet$, graded-polarization $Q_\bullet$ and Hodge filtration $F^\bullet$. Associated to $(V,W_\bullet,F^\bullet,Q_\bullet)$ one has a natural classifying space (period domain) $\mm$ of graded-polarized mixed Hodge structures on  $(V,W_\bullet, Q_\bullet)$, by varying the Hodge filtration.  Let $G=G(V,W_\bullet,Q_\bullet)$ be the associated $\qq$-algebraic group consisting of elements $ g \in GL(V)^W $ such that $\Gr^W_\bullet(g) \in \mathrm{Aut}(Q_\bullet)$. Then $G(\rr)$ acts transitively on $\mm$, and provides $\mm$ with an embedding into a ``compact dual'' $\check{\mm} \supset \mm$, which is the orbit, inside a flag variety parametrizing decreasing filtrations of $V_\cc$ compatible with $W_\bullet$, of any point in $\mm$ under the action of $G(\cc)$. The inclusion $\mm \subset \check{\mm}$ gives $\mm$ a natural structure of complex manifold. 

Let $n \geq m \geq 0$ be integers, let $Y=(\Delta^*)^m \times \Delta^{n-m}$, and let $(\vv,\ww_\bullet,\ff^\bullet,\boldsymbol{Q}_\bullet)$ be a graded-polarized variation of mixed Hodge structures over $Y$.  Let $(V,W_\bullet,F^\bullet,Q_\bullet)$ be a reference fiber at a point $t_0\in Y$ near the origin, write $G=G(V,W_\bullet,Q_\bullet)$ as above, and let $\mm$ be the associated period domain. Then we have a monodromy representation $\rho \colon \pi_1(Y,t_0) \to G(\qq)$, and denoting by $\Gamma \subset G(\qq)$ its image we have a natural associated holomorphic period map $\phi \colon Y \to \mm/\Gamma$.

Assume from now on that the variation $(\vv,\ww_\bullet,\ff^\bullet,\boldsymbol{Q}_\bullet)$ on $Y$ is admissible. For a precise definition we refer to \cite[Section~14.4.1]{ps}. The admissibility condition implies in particular that all local monodromy operators around the branches of the boundary divisor $D=\Delta^n \setminus Y$are unipotent. The vector bundle $\VV=\vv \otimes_\cc \oo_Y$ has a canonical Deligne extension $\tilde{\VV}$ over $\Delta^n$, and the central fiber $\tilde{\VV}(0)$ comes equipped with a natural Hodge filtration $\tilde{\ff}^\bullet(0)$. 

Moreover, let $(V,W_\bullet,F^\bullet,Q_\bullet)$ be a reference fiber of $(\vv,\ww_\bullet,\ff^\bullet,\QQ_\bullet)$ near $D$, and denote the local monodromy logarithms in $\mathfrak{g}=\Lie G(\rr)$ by $N_1,\ldots,N_m$. Let $\mathcal{C}$ be the $\rr_{>0}$-span of the $N_i$ inside $\mathfrak{g}$. Then each element of $\mathcal{C}$ is nilpotent, and compatible with $W_\bullet$, the relative weight filtration of $(V,W_\bullet)$ exists for each element in $\mathcal{C}$, and is in fact constant on $\mathcal{C}$. Using parallel transport along the Gauss-Manin connection, the central fiber $\tilde{\VV}(0)$ of the Deligne extension is then  equipped with a canonical increasing filtration $M_\bullet$. The triple $(\tilde{\VV}(0),\tilde{\ff}^\bullet(0),M_\bullet)$ is a mixed Hodge structure, the limit mixed Hodge structure of the variation.

Let $U$ be Siegel's upper half plane. We denote by $e \colon U^m \to (\Delta^*)^m$ the uniformization map given by $(z_1,\ldots,z_m) \mapsto (\exp(2\pi i z_1),\ldots,\exp(2\pi i z_m))$. Via $e$ the period map $\phi$ of our variation lifts to a map $\tilde{\phi} \colon U^m \times \Delta^{n-m} \to \mm$. 
Then as $N_i \in \Lie G(\rr)$ we have $\exp(\sum_{i=1}^m z_iN_i) \in G(\cc)$ for all $z_1,\ldots,z_m \in U$. Let then $\tilde{\psi} \colon  
U^m \times \Delta^{n-m} \to \check{\mm}$ be the map 
\[ \tilde{\psi}(z_1,\ldots,z_m,t_{m+1},\ldots,t_n) = \exp(-\sum_{i=1}^m z_iN_i) \tilde{\phi}(z_1,\ldots,z_m,t_{m+1},\ldots,t_n) \, . \] 
Then $\tilde{\psi}$ descends to an ``untwisted'' period map $\psi \colon (\Delta^*)^m \times \Delta^{n-m} \to \check{\mm}$.

In this set-up, one has the following Nilpotent Orbit Theorem \cite[Section~6]{pearlhiggs}. 
\begin{thm} \label{nilpotentorbit} Let $(\vv,\ww_\bullet,\ff^\bullet,\boldsymbol{Q}_\bullet)$ be an admissible graded-polarized variation of mixed Hodge structures over $Y=(\Delta^*)^m \times \Delta^{n-m}$. (a) The untwisted period map $\psi$ extends to a holomorphic map $\psi \colon \Delta^n \to \check{\mm}$. (b) The value $\psi(0)$ at zero coincides with the Hodge filtration of the limit mixed Hodge structure $(\tilde{\VV}(0),\tilde{\ff}^\bullet(0),M_\bullet)$. 
\end{thm}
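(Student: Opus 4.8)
The plan is to deduce part (a) from a polynomial growth estimate for the untwisted period map together with Riemann's removable singularity theorem, and to deduce part (b) by comparing, through the distinguished frame of the Deligne extension, the two available descriptions of the limiting Hodge filtration. For notational ease I will assume $m=n$, so that $Y=(\Delta^*)^n$ and $\tilde\psi$ descends to a \emph{single-valued} holomorphic map $\psi\colon(\Delta^*)^n\to\check{\mm}$; the general case differs only by carrying the holomorphic parameters $t_{m+1},\dots,t_n$ along as spectators. Fix a holomorphic frame of the trivial bundle $V\otimes\oo$ on $U^n$ and read off $\psi$ in affine coordinates on the compact dual $\check{\mm}$ relative to this frame. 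Part (a) then asks precisely that each such coordinate function of $\psi$ extend holomorphically across the divisors $\{t_i=0\}$. A Laurent-coefficient estimate shows that a single-valued holomorphic function on $(\Delta^*)^n$ extends holomorphically over $\Delta^n$ as soon as it is, locally uniformly, bounded by a polynomial in the quantities $\log|t_i|^{-1}$ — the negative-degree Laurent coefficients vanish because $r\,(\log r^{-1})^{N}\to 0$ as $r\to 0^{+}$. So everything is reduced to establishing such a bound for the coordinates of $\psi$.

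This estimate is where admissibility enters, via reduction to the pure case followed by a propagation step. For each weight $k$ the graded piece $\Gr^W_k(\vv,\ww_\bullet,\ff^\bullet,\QQ_\bullet)$ is a polarizable variation of pure Hodge structure with unipotent local monodromies $\Gr^W_k(N_i)$, and to it I would apply Schmid's Nilpotent Orbit Theorem: the corresponding untwisted period maps $\psi_k$ extend holomorphically to $\Delta^n\to\check{\mm}_k$ (the compact dual of $\Gr^W_k$), their values $\psi_k(0)$ are the Hodge filtrations of the limiting pure Hodge structures, and Schmid's norm estimates bound the Hodge length of flat sections of $\Gr^W_k\vv$ polynomially in the $\log|t_i|^{-1}$, with exponents governed by the weight filtration of $\sum_i z_i\,\Gr^W_k(N_i)$. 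Now $\check{\mm}$ maps to the product $\prod_k\check{\mm}_k$ of the compact duals of the graded pieces, the fibres of this map being affine spaces that parametrize the off-diagonal components, relative to $W_\bullet$, of a filtration; the extended graded period maps control the image of $\psi$ in the base, leaving the task of bounding the fibre components of $\psi$. These fibre components are precisely the extension data of the limiting mixed Hodge structure as seen in a family, and their growth is governed by the relative weight filtration $M_\bullet$, whose existence and constancy on the whole monodromy cone $\CC$ is part of admissibility: Pearlstein's several-variable refinement of Schmid's analysis to the mixed setting (cf.\ \cite{pearldiff}) shows that $\psi$ stays at polynomially bounded distance, in the $\log|t_i|^{-1}$, from the model nilpotent orbit $z\mapsto\exp(\sum_i z_iN_i)\,\psi(0)$, which gives the required bound and hence part (a). I expect this propagation of polynomial control from the graded pieces to the off-diagonal data to be the main obstacle: the pure case and the bookkeeping along the base $\prod_k\check{\mm}_k$ are comparatively formal, and it is here that the force of admissibility is really felt.

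For part (b), recall that near the origin the Deligne extension $\tilde{\VV}$ of $\VV=\vv\otimes\oo_Y$ is trivialized by a frame of the form $\exp(-\sum_i z_iN_i)\cdot(\text{flat multivalued frame})$, the twisting factor being inserted precisely to absorb the unipotent monodromy — the very same twist that turns $\tilde\phi$ into $\tilde\psi$. Hence, writing a local generating section of a Hodge subbundle $\ff^p\subset\VV$ in this Deligne frame, its value in the central fibre equals $\lim_{z}\exp(-\sum_i z_iN_i)\,F^p(\tilde\phi(z))=\psi(0)^p$, the limit existing by part (a). Therefore $\tilde{\ff}^p(0)=\psi(0)^p$ for all $p$, which is the assertion of (b); that $(\tilde{\VV}(0),\psi(0),M_\bullet)$ is moreover a mixed Hodge structure is already recorded in the discussion preceding the theorem, and is consistent with the above, since by the reduction to the pure case its graded pieces are the limiting pure Hodge structures furnished by Schmid's theorem.
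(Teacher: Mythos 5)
The paper does not actually prove this theorem: it is quoted verbatim from Pearlstein (the citation \cite{pearlhiggs}, Section~6, given immediately before the statement), so there is no in-paper argument to compare with, and the question is whether your outline stands on its own. Parts of it do. The reduction of (a) to a locally uniform bound on the coordinates of $\psi$ that is polynomial in the $\log|t_i|^{-1}$, followed by the Laurent-coefficient removable-singularity argument, is sound (modulo the minor point that one must either check $\psi$ stays in a fixed affine chart of $\check{\mm}$ near the boundary or work with Pl\"ucker coordinates), and the pure-case input from Schmid on the graded pieces $\Gr^W_k$ is the right starting point. Part (b) is essentially correct and close to tautological once (a) is granted: $\psi$ is by construction the Hodge filtration written in the $\exp(-\sum_i z_iN_i)\cdot(\textrm{flat frame})$ trivialization of the Deligne extension, so $\psi(0)$ is the filtration induced on $\tilde{\VV}(0)$, which is how $\tilde{\ff}^\bullet(0)$ is defined.

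The gap is exactly at the step you yourself flag as the main obstacle, and it is the entire content of the theorem: the polynomial bound on the extension-data (off-diagonal) components of $\psi$. Your justification is circular as written: you appeal to an estimate saying $\psi$ stays at polynomially bounded distance from the nilpotent orbit $z\mapsto\exp(\sum_i z_iN_i)\psi(0)$, but this presupposes that $\psi(0)$ exists, which is precisely what (a) asserts; moreover the reference you lean on, \cite{pearldiff}, is the $SL_2$-orbit paper, which logically builds on the nilpotent orbit theorem rather than supplying it (the theorem as stated is proved in \cite{pearlhiggs}). Note also that the only consequence of admissibility you actually use is the existence and constancy of the relative weight filtration on the monodromy cone; that alone does not give the growth bound. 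In the Steenbrink--Zucker/Kashiwara formulation referred to in \cite[Section~14.4.1]{ps}, admissibility also encodes extendability of the Hodge bundles over the Deligne extension along curve restrictions, and the substance of the proof is to promote this one-variable information to the polydisc; your sketch never engages with that mechanism. So as a self-contained argument the proposal does not close, and as a citation-based argument it amounts to the same move the paper makes, namely quoting Pearlstein.
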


\section{Asymptotics of the period and Abel-Jacobi map} \label{asympresults}

The purpose of this section is make the Nilpotent Orbit Theorem sufficiently explicit for the period and Abel-Jacobi map associated to a family of curves with nodal degeneration. We note that the relevant graded-polarized variations of mixed Hodge structure will indeed be admissible.  We mention \cite[Section~1]{ca}, \cite[Section~2]{fr} and \cite{hain_periods} as references for the material below. However, we warn the reader that these references deal with the Hodge structure on cohomology (pure weight $+1$) of curves, whereas here we deal with homology (pure weight $-1$). The results we find are certainly well known, but we have not been able to localize them as such in the literature. 

If $(V,F^\bullet,W_\bullet)$ is a mixed Hodge structure, we decide to call $\mathrm{Ext}^1_{\mathrm{MHS}}(\zz(0),V)$ the generalized jacobian of $(V,F^\bullet,W_\bullet)$. For example, let $C$ be a nodal complex algebraic curve of arithmetic genus $h$, and write $M=\rmH_1(C)$. Then we let $\Jac(C)=\mathrm{Ext}^1_{\mathrm{MHS}}(\zz(0),M)$ be the generalized jacobian of (the homology) of $C$. It turns out that $\Jac(C)$ can be given a natural structure of complex algebraic group, namely as the group of isomorphism classes of invertible sheaves on $C$ that are of degree zero on every component of $C$. Let $\nu \colon \tilde{C}\to C$ denote the normalization of $C$, and write $U=\rmH_1(\tilde{C})=\oplus \rmH_1(\tilde{C}_i)$, with $\tilde{C}_i$ the normalizations of the irreducible components of $C$. Let $E =\Jac(U)=\oplus \Jac(\tilde{C}_i)$. Then $E$ is the jacobian of a pure polarized Hodge structure of type $\{(-1,0), (0,-1)\}$. Write $F= \Jac(C)$. 

Let $G$ be the dual graph of the nodal curve $C$. Pulling back invertible sheaves along $\nu$ gives a canonical extension of algebraic groups
\begin{equation} \label{extension}  
 1 \to \rmH^1(G) \otimes \mathbb{G}_\mathrm{m} \to F \to  E \to 0 \, . 
\end{equation}
Taking the homotopy long exact sequence of (\ref{extension}) we obtain an extension of mixed Hodge structures
\[ 0 \to \rmH^1(G) \otimes \zz(1) \to \rmH_1(C) \to \rmH_1(\tilde{C}) \to 0 \, . \]
Vice versa, we may obtain (\ref{extension}) by applying $\mathrm{Ext}_{\mathrm{MHS}}^1(\zz(0),-)$ to the above extension. 

Let $P, Q \in C$ be points. Then we have an extension
\[ 0 \to M=\rmH_1(C) \to \rmH_1(C,\{P,Q\}) \to \zz(0) \to 0 \]
of mixed Hodge structures. Generalizing the notation from Section \ref{jacobians} we denote by $\int_Q^P$ the corresponding element of $F=\mathrm{Ext}^1_{\mathrm{MHS}}(\zz(0),M)$. By a slight abuse of notation, we also denote by $\int_Q^P$ the resulting image in $E=\mathrm{Ext}^1_{\mathrm{MHS}}(\zz(0),U)$ under the projection $F \to E$. 

An alternative way of describing $\int_Q^P$ is as follows. Let $\omega_C$, $\omega_{\tilde{C}}$ denote the dualizing sheaves of $C$ and $\tilde{C}$, respectively. The canonical map $\nu_*\omega_{\tilde{C}} \to \omega_{C}$ yields an inclusion $\omega_{\tilde{C}}(\tilde{C}) \to \omega_C(C)$ and hence a surjection $ \omega_C(C)^\lor \to \omega_{\tilde{C}}(\tilde{C})^\lor $. This surjection coincides with the induced map $\Lie(F) \to \Lie(E)$ from (\ref{extension}). We can alternatively write $E = \omega_{\tilde{C}}(\tilde{C})^\lor/\rmH_1(\tilde{C})$, and we find the following lemma.
\begin{lem} \label{AJdescription}
The Abel-Jacobi element $\int_Q^P$ of $E=\omega_{\tilde{C}}(\tilde{C})^\lor/\rmH_1(\tilde{C})$ is the functional given by integrating elements of $\omega_{\tilde{C}}(\tilde{C})\subset \omega_C(C)$ along paths from $P$ to $Q$ over $C$. 
\end{lem}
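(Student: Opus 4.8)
The plan is to reduce the assertion to the analogous statement for $F=\Jac(C)$ itself, and to prove that in turn by the extension-class computation recalled in Section~\ref{jacobians}. By the identification set up just before the lemma, the morphism $F\to E$ is induced on Lie algebras by the surjection $\omega_C(C)^\lor\to\omega_{\tilde{C}}(\tilde{C})^\lor$ dual to the inclusion $\omega_{\tilde{C}}(\tilde{C})\hookrightarrow\omega_C(C)$, that is, by restriction of linear functionals, and it carries the lattice $H_1(C)$ onto $H_1(\tilde{C})$ compatibly with that restriction (this being precisely the homology sequence underlying~(\ref{extension})). Hence it suffices to prove: the Abel-Jacobi element $\int_Q^P\in F=\omega_C(C)^\lor/H_1(C)$ is represented by the functional $\eta\mapsto\int_\gamma\eta$, where $\gamma$ is a $1$-chain on $C$ with boundary $P-Q$. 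Indeed, restricting such a functional to $\omega_{\tilde{C}}(\tilde{C})\subset\omega_C(C)$ and passing to the quotient by $H_1(\tilde{C})$ then yields exactly the functional claimed in the lemma, with the orientation of the path as fixed by the conventions of Section~\ref{jacobians}.

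To prove the displayed claim I follow the recipe of Section~\ref{jacobians}, now for the generalized jacobian. Write $M=H_1(C)$, so that $F=\mathrm{Ext}^1_{\mathrm{MHS}}(\zz(0),M)=M_\cc/(M+F^0M_\cc)$, and recall that the perfect pairing of mixed Hodge structures $H_1(C)\otimes H^1(C)\to\zz(0)$ identifies $M_\cc/F^0M_\cc$ with $(F^1H^1(C)_\cc)^\lor=\omega_C(C)^\lor$, the map $M\to M_\cc/F^0M_\cc$ being $c\mapsto\int_c$. By construction $\int_Q^P$ is the class of the extension $0\to M\to H_1(C,\{P,Q\})\to\zz(0)\to 0$. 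To read it off, choose an integral lift $e_0\in H_1(C,\{P,Q\})$ of the canonical generator of $\zz(0)$ --- concretely, the class of a $1$-chain $\gamma$ on $C$ with $\partial\gamma=P-Q$ --- together with a lift $w_0\in F^0H_1(C,\{P,Q\})_\cc$ of that generator (such exists by strictness of $F$); the extension class is then the image of $w_0-e_0\in M_\cc$ in $M_\cc/(M+F^0M_\cc)$. Pairing with $\eta\in\omega_C(C)=F^1H^1(C)_\cc$: lift $\eta$ to $F^1H^1(C,\{P,Q\})_\cc$ (using the relative de Rham realization, where $\eta$ is represented by the pair $(\eta,0)$, and where this $F^1$-lift is in fact unique because $\ker\big(H^1(C,\{P,Q\})\to H^1(C)\big)=\tilde{H}^0(\{P,Q\})$ is Hodge--Tate), and note that the pairing $H_1(C,\{P,Q\})\otimes H^1(C,\{P,Q\})\to\zz(0)$ is a morphism of mixed Hodge structures into a Hodge--Tate structure; hence $\langle w_0,\eta\rangle\in F^1\zz(0)_\cc=0$ while $\langle e_0,\eta\rangle=\int_\gamma\eta$. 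Therefore the extension class pairs with $\eta$ to give $-\int_\gamma\eta$, i.e.\ $\int_Q^P\in F$ is represented by $\eta\mapsto\int\eta$ along a path from $P$ to $Q$ on $C$, well-defined modulo integration over $H_1(C)$. This is the exact nodal analogue of the smooth-curve computation in Section~\ref{jacobians}.

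Combining the two steps: the image of $\int_Q^P$ under $F\to E$ is the restriction of this functional to $\omega_{\tilde{C}}(\tilde{C})$, namely $\eta\mapsto\int\eta$ for $\eta\in\omega_{\tilde{C}}(\tilde{C})\subset\omega_C(C)$ along a path from $P$ to $Q$ on $C$; since $H_1(C)$ maps onto $H_1(\tilde{C})$ compatibly with restriction of functionals, this functional is well-defined modulo $H_1(\tilde{C})$ and so defines a genuine point of $E=\omega_{\tilde{C}}(\tilde{C})^\lor/H_1(\tilde{C})$, which is the assertion. I expect the one step needing genuine care to be the middle computation: making the mixed Hodge structure on $H_1(C,\{P,Q\})$ explicit enough --- via its relative de Rham model --- to see simultaneously that the $F^1$-lift of $\eta$ exists, that $\langle w_0,\eta\rangle=0$, and that the integral lift of the generator of $\zz(0)$ is represented by a $1$-chain with boundary $P-Q$. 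Once the identification $\Lie F=\omega_C(C)^\lor$ from the preceding discussion is taken as given, however, this runs parallel to Section~\ref{jacobians} essentially verbatim, and the rest is formal.
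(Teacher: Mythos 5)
Your argument is correct and follows the route the paper intends: the lemma is stated there without a separate proof, as a direct consequence of the identification (made just before the lemma) of $\Lie(F)\to\Lie(E)$ with the restriction map $\omega_C(C)^\lor\to\omega_{\tilde{C}}(\tilde{C})^\lor$ together with the extension-class description of $\int_Q^P$ recalled in Section~\ref{jacobians}, which is exactly your two-step reduction. Your explicit Carlson-type computation (integral lift $e_0=[\gamma]$ with $\partial\gamma=P-Q$, Hodge lift $w_0$, pairing against the unique $F^1$-lift of $\eta$ in relative cohomology) just fills in the details, and the orientation it yields matches the lemma's ``paths from $P$ to $Q$''.
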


Now let $\Ybar = \Delta^n$, and let $\pi \colon \Xbar \to \Ybar$ be a nodal curve, assumed to be smooth over $Y=(\Delta^*)^m \times \Delta^{n-m}$. Let $X=\pi^{-1}Y$ and let $\vv=\R^1 \pi_* \zz_X(1)$. Then $\vv$ underlies a canonical admissible variation of pure polarized Hodge structure $(\vv,\ff^\bullet,\boldsymbol{Q})$ of weight $-1$ over $Y$. From now on, we will usually suppress the polarizations from our notation, as they will be clear from the context.
Let $\Xbar_0$ be the fiber of $\Xbar \to \Ybar$ over the origin, let $G$ be its dual graph, and let $\nu \colon \tilde{\Xbar}_0 \to \Xbar_0$ be its normalization.
Our discussion above yields an extension of mixed Hodge structures
\[ 0 \to \rmH^1(G) \to \rmH_1(\Xbar_0) \to \rmH_1(\tilde{\Xbar}_0) \to 0    \]
with $\rmH^1(G) $ pure of type $(-1,-1)$.
Let $(V,F^\bullet)$ be a reference fiber of $\vv$ near the origin. By the very construction of the limit mixed Hodge structure on the central fiber of the canonical extension of $\VV=\vv \otimes_\cc \oo_Y$, the first part $\rmH^1(G) \to \rmH_1(\Xbar_0) $ of the extension can be realized as a piece of the monodromy (relative) weight filtration on $V$. More precisely, let $N$ be any element of the monodromy cone associated to $V_\qq$, then we have $N^2=0$ and the associated filtration on $V_\qq$ reads
\[ M_\bullet \colon \quad 0 \subset M_{-2} \subset M_{-1} \subset M_0 = V_\qq \, , \]
with (given our choice of coordinates) canonical identifications $M_{-2}=\Im N \cong \rmH^1(G)_\qq$ and $M_{-1}=\Ker N \cong \rmH_1(\Xbar_0)_\qq$. 

Let $r=\rank \rmH^1(G)$. One can choose a symplectic basis $(e_1,\ldots,e_h,f_1,\ldots,f_h)$ of $(V,Q)$ such that:
\begin{enumerate}
\item $\rmH^1(G)=\spann{(e_1,\ldots,e_r)}$, 
\item $\rmH_1(\Xbar_0)=\spann{(e_1,\ldots,e_h,f_{r+1},\ldots,f_h)}$, 
\end{enumerate}
In particular, $(\bar{e}_{r+1},\ldots,\bar{e}_h,\bar{f}_{r+1},\ldots,\bar{f}_h)$ is a symplectic basis of the pure polarized Hodge structure 
$\Gr_{-1}^M V \cong \rmH_1(\tilde{\Xbar}_0)$ of type $\{(-1,0), (0,-1)\}$. 
With respect to our chosen basis, each local monodromy operator $N_j$ has the form
\[ N_j = \begin{pmatrix}[c|c] 0 & A_j \\
\hline 
0 & 0 \\
\end{pmatrix} \, , \]
where $A_j$ is an integral symmetric positive semi-definite $h$-by-$h$ matrix, and where each non-zero element of $A_j$ is in the left upper $r$-by-$r$ block of $A_j$. 

As is well known, the period domain $\mm$ associated to $\vv$ can be realized as Siegel's upper half space $U_h$ of degree $h$. We have $G(V)=\Sp(2h)_\qq$, and the action of $G(V)(\rr)$ on $U_h$ is given by
\[ \begin{pmatrix}[c|c] A & B \\
\hline 
C & D \\
\end{pmatrix} \cdot M = (AM+B)(CM+D)^{-1} \, , \,     
\begin{pmatrix}[c|c] A & B \\
\hline 
C & D \\
\end{pmatrix} \in \Sp(2h,\rr) \, , \, M \in U_h \, . \]
The period map $\Omega \colon Y \to  U_h/\Gamma$ is given by associating to each $t \in Y$ the period matrix of $X_t$ on the chosen symplectic basis of $V$ (cf. Section \ref{jacobians}). Here $\Gamma$ is the image of the monodromy representation into $G(V)(\qq)=\Sp(2h,\qq)$. For a commutative ring $R$, denote by $S(r \times r,R)$ the set of symmetric $r$-by-$r$ matrices with values in $R$.
\begin{prop} \label{asymptperiod} 
(a) There exists a holomorphic map $\psi \colon \Delta^n \to S(h \times h, \cc)$ such that for $(z,t) \in U^m \times \Delta^{n-m}$ with $e(z)$ sufficiently close to zero the equality
\[ \Omega(e(z),t) = \sum_{j=1}^m A_j z_j + \psi(e(z),t) 
\]
holds in $S(h \times h, \cc)$.
(b) Writing
\[ \psi =  \begin{pmatrix}[c|c] \psi_{11} & \psi_{12} \\
\hline 
\psi_{21} & \psi_{22} \\
\end{pmatrix}   \]
with $\psi_{11}$ an $r$-by-$r$ matrix we have that $\psi_{22}(0)$ is equal to the period matrix of $(\Gr_{-1}^M V,\tilde{\ff}^\bullet(0))$ on the symplectic basis $(\bar{e}_{r+1},\ldots,\bar{e}_h,\bar{f}_{r+1},\ldots,\bar{f}_h)$. 
\end{prop}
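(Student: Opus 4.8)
The plan is to deduce the proposition from the Nilpotent Orbit Theorem (Theorem~\ref{nilpotentorbit}), after making the general statement explicit for the case $\mm = U_h$, $G(V) = \Sp(2h)_\qq$.

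For part~(a), I would use the uniformisation $e$ to lift the period map to $\tilde\phi\colon U^m\times\Delta^{n-m}\to U_h$, so that $\tilde\phi(z,t)$ is the period matrix of $X_{e(z),t}$, i.e. $\tilde\phi(z,t) = \Omega(e(z),t)$. Since $N_j$ has the block shape displayed in the text (zero except for the block $A_j$ in the upper right) and $N_j^2 = 0$, the element $\exp(\sum_j z_j N_j) = I + \sum_j z_j N_j$ is block upper-triangular with both diagonal blocks equal to $I$ and upper-right block $\sum_j z_j A_j$; by the explicit $\Sp(2h,\rr)$-action on $U_h$ recalled in the text it sends $M \mapsto M + \sum_j z_j A_j$. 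The defining relation $\tilde\psi = \exp(-\sum_j z_j N_j)\tilde\phi$ of the untwisted period map therefore gives
\[ \Omega(e(z),t) = \tilde\psi(z,t) + \sum_{j=1}^m z_j A_j \]
whenever $e(z)$ is sufficiently close to zero. By Theorem~\ref{nilpotentorbit}(a) the map $\tilde\psi$ descends to a map $\psi$ on $(\Delta^*)^m\times\Delta^{n-m}$, which by the above equals $\Omega - \sum_j z_j A_j$ and is therefore single-valued and symmetric there, and which extends holomorphically to $\psi\colon\Delta^n\to\check{\mm}$ with $\psi(0) = \tilde{\ff}^0(0)$ by part~(b) of the theorem. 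It then remains to check that $\psi$ takes values in the big cell of the Lagrangian Grassmannian $\check{\mm}$ attached to the Lagrangian $\spann(e_1,\ldots,e_h)$, identified with $S(h\times h,\cc)$ via the period-matrix normalisation of Section~\ref{jacobians} ($\Psi$ corresponds to the Lagrangian spanned by the $-\sum_j\Psi_{ij}e_j + f_i$). As this big cell is open, it suffices to check that $\psi(0) = \tilde{\ff}^0(0)$ lies in it, i.e. that $\tilde{\ff}^0(0)\cap\spann_\cc(e_1,\ldots,e_h) = 0$, together with the analogous statement at the other points of the boundary divisor, where the identical argument applies. I would prove this transversality using the structure of the limit mixed Hodge structure: $\spann(e_1,\ldots,e_h)\subset M_{-1} = \Ker N$ contains $M_{-2} = \Im N$, one has $\tilde{\ff}^0(0)\cap M_{-2,\cc} = 0$ because $M_{-2}$ is of type $(-1,-1)$, and the induced Hodge filtration on the pure polarised structure $\Gr_{-1}^M V$ of type $\{(-1,0),(0,-1)\}$ is transverse to $\spann_\cc(\bar e_{r+1},\ldots,\bar e_h)$ by the Riemann bilinear relations.

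For part~(b), I would take the lower-right $(h-r)\times(h-r)$ block of the displayed identity; since every $A_j$ has all its nonzero entries in the upper-left $r\times r$ block, this yields $\Omega_{22}(e(z),t) = \psi_{22}(e(z),t)$, so $\Omega_{22}$ extends holomorphically over the origin with value $\psi_{22}(0)$. To identify $\psi_{22}(0)$, write $\Psi = \psi(0)$, so that $\tilde{\ff}^0(0)$ is spanned by the vectors $w_i = -\sum_{j=1}^h \Psi_{ij} e_j + f_i$. Intersecting with $M_{-1,\cc} = \spann_\cc(e_1,\ldots,e_h,f_{r+1},\ldots,f_h)$ forces the coefficients of $f_1,\ldots,f_r$ to vanish, and reducing modulo $M_{-2,\cc} = \spann_\cc(e_1,\ldots,e_r)$ then shows that the induced Hodge filtration $F^0\Gr_{-1}^M V$ is spanned by the vectors $-\sum_{j=r+1}^h \Psi_{ij}\bar e_j + \bar f_i$ for $i = r+1,\ldots,h$. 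Comparing with the period-matrix convention of Section~\ref{jacobians} for $\Gr_{-1}^M V$ on the symplectic basis $(\bar e_{r+1},\ldots,\bar e_h,\bar f_{r+1},\ldots,\bar f_h)$, this says exactly that the period matrix of $(\Gr_{-1}^M V,\tilde{\ff}^\bullet(0))$ is the block $\Psi_{22} = \psi_{22}(0)$, which is the assertion.

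The main obstacle is the transversality step in part~(a): showing that $\psi$ genuinely lands in $S(h\times h,\cc)$, equivalently that the limit Hodge filtration is nondegenerate with respect to the $e$-Lagrangian. This is the only place where the internal structure of the limit mixed Hodge structure (the weight of $M_{-2}$, the purity of $\Gr_{-1}^M V$) is really used; everything else is a matter of keeping the conventions for the $\Sp(2h,\rr)$-action, the big-cell chart, and the period-matrix normalisation of Section~\ref{jacobians} consistent with one another.
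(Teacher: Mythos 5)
Your proposal is correct and follows essentially the same route as the paper: compute that $\exp(\sum_j z_jN_j)$ acts on $U_h$ as translation by $\sum_j z_jA_j$, invoke the Nilpotent Orbit Theorem for (a), and for (b) intersect the normalized spanning vectors of $\tilde{\ff}^0(0)$ with $M_{-1,\cc}$ and project modulo $M_{-2,\cc}$, using $\dim \tilde{\ff}^0(0)\Gr_{-1}^M V_\cc=h-r$. The only difference is that you spell out the transversality of $\tilde{\ff}^0(0)$ with $\spann_\cc(e_1,\ldots,e_h)$ (so that $\psi$ really lands in the chart $S(h\times h,\cc)$), a point the paper's proof leaves implicit; your argument for it is correct.
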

\begin{proof} First of all, we have $\exp(N_jz_j)M=A_jz_j+M$ for each $M \in U_h$, $z_j \in U$, and $j=1,\ldots,m$. Part (a) then follows from Theorem \ref{nilpotentorbit}(a). As for part (b), we know by Theorem \ref{nilpotentorbit}(b) that $\psi(0)$ is the period matrix of the  limit mixed Hodge structure $(V,\tilde{\ff}^\bullet(0),M_\bullet)$ of $(\boldsymbol{V},\ff^\bullet)$ on the symplectic basis $(e_1,\ldots,e_h,f_1,\ldots,f_h)$ of $V$. That is, there exists a basis $(v_1,\ldots,v_h)$ of $\tilde{\ff}^0(0) V_\cc$ such that the identities $v_i = -\sum_{j=1}^h \psi(0)_{ij}e_j + f_i$ hold in $V_\cc$ for $i=1,\ldots,h$. Note that $v_{r+1},\ldots,v_h$ are all in $M_{-1,\cc}=\cc$-$\spann{(e_1,\ldots,e_h,f_{r+1},\ldots,f_h)}$. Let 
\[ p \colon \tilde{\ff}^0(0)V_\cc \cap M_{-1,\cc} \to \tilde{\ff}^0(0)(M_{-1,\cc}/M_{-2,\cc}) \] 
be the projection. Then we have the identities $p(v_i)=-\sum_{j=r+1}^h \psi(0)_{ij}\bar{e}_j + \bar{f}_i$ in $M_{-1,\cc}/M_{-2,\cc}=\Gr_{-1}^M V_\cc $ for $i=r+1,\ldots,h$. As the $p(v_i)$ are clearly linearly independent and as $\dim \tilde{\ff}^0(0)\Gr_{-1}^M V_\cc = h-r$ we conclude that $\psi_{22}(0)$ is the period matrix of the pure polarized Hodge structure $\Gr_{-1}^M V$ on its symplectic basis $(\bar{e}_{r+1},\ldots,\bar{e}_h,\bar{f}_{r+1},\ldots,\bar{f}_h)$.
\end{proof}

Now assume two distinct sections $P$, $Q$ of $\pi \colon \Xbar \to \Ybar$ are given. Over each $t \in Y$ we obtain an extension
\[  0 \to \rmH_1(X_t) \to \rmH_1(X_t,\{P_t,Q_t\}) \to \zz(0) \to 0 \]
in the category of mixed Hodge structures. Varying $t \in Y$ we obtain a canonical extension
\[ 0 \to \vv \to \vv(P,Q) \to \zz(0) \to 0 \]
of variations of mixed Hodge structure over $Y$. The variation $\vv(P,Q)$ is graded-polarized and moreover admissible. We are interested in the asymptotics of its period map near $D=\Ybar \setminus Y$. 

The weight filtration of the variation is
\[ W_\bullet \colon \quad 0 \subset \boldsymbol{W}_{-1}=\boldsymbol{V}_\qq \subset \boldsymbol{W}_0 = \boldsymbol{V}(P,Q)_\qq \, , \]
so that $\Gr_{-1}^{\boldsymbol{W}} \boldsymbol{V}(P,Q)_\qq=\boldsymbol{V}_\qq$, $\Gr_{0}^{\boldsymbol{W}} \boldsymbol{V}(P,Q)_\qq=\qq(0)$. We denote the Hodge filtration of $\vv(P,Q)_\qq$ by $\ff'^\bullet$. We start by taking a reference fiber $V(P,Q)$ of $\boldsymbol{V}(P,Q)$ and augmenting our chosen symplectic basis of $V$ by an $e_0 \in V(P,Q)$ lifting the canonical generator of $\zz(0)$ as in Section \ref{jacobians}.
Letting $M_\bullet$ as before be the monodromy (relative) weight filtration on $V_\qq$, we recall that, given our choice of coordinates, there is a natural isomorphism $M_{-1} \cong \rmH_1(\Xbar_0)_\qq$. The topological construction of this isomorphism via Picard-Lefschetz theory (cf. \cite[Section X.9]{acg}) makes it clear that, likewise, the relative homology $\rmH_1(\Xbar_0,\{P(0),Q(0)\})_\qq$ can be realized inside $V(P,Q)_\qq$, as the submodule $M_{-1}+\qq e_0$.

The admissibility of our variation implies that the relative weight filtration $M'_\bullet$ on the reference fiber $V(P,Q)_\qq$ exists. Let $N$ be any element from the monodromy cone acting on $V(P,Q)_\qq$. Our first aim is to determine the matrix shape of $N$ on our chosen basis $(e_0,e_1,\ldots,e_h,f_1,\ldots,f_h)$ of $V(P,Q)$. As $N^2=0$, the filtration $L_\bullet$ associated to $N$ (see the beginning of Section \ref{sec:var_mhs}) on $V(P,Q)_\qq$ is 
\[ L_\bullet \colon \quad 0 \subset L_{-1} \subset L_0 \subset L_1=V(P,Q)_\qq \, , \]
with $L_{-1}=\Im(N)$, $L_0=\Ker(N)$. On the other hand, the weight filtration is
\[ W_\bullet \colon \quad 0 \subset W_{-1}=V_\qq \subset W_0=V(P,Q)_\qq \, , \] 
which is indeed compatible with $N$. As the monodromy action on $\Gr_0^W =\qq(0)$ is trivial, we have that $\Im(N) \subset V_\qq$, so that $N^{-1}V_\qq=V(P,Q)_\qq$. We note that $W_\bullet$ has length two. It then follows from \cite[Proposition 2.16]{sz} and 
\cite[Proposition 2.11]{sz} that $N$ is a ``strict'' endomorphism, and that the weight filtration of $N$ relative to $W_\bullet$ is equal to the ``convolution'' of $L_\bullet$ and $W_\bullet$. The first statement means that $N^{-1}W_k = W_k + \Ker N$ for all $k \in \zz$, and the second statement means that $V(P,Q)_\qq=N^{-1}V_\qq$ and that
\[ M'_\bullet \colon \quad 0\subset M'_{-2} \subset M'_{-1} \subset M'_0 \subset M'_{1}= V(P,Q)_\qq  \]
with $M'_{-2}=\Im(N)$, $M'_{-1}=\Im(N)+\Ker(N|_{V_\qq})=\Ker(N|_{V_\qq})=M_{-1}$ and $M'_0=V_\qq+\Ker(N)=V(P,Q)_\qq$ is the weight filtration of $N$ relative to $W_\bullet$. The equalities $V(P,Q)_\qq=N^{-1}V_\qq=V_\qq+\Ker(N)$  imply that $\Ker(N) \supsetneqq \Ker(N|_{V_\qq})$ and hence that $\Im(N)=\Im(N|_{V_\qq})$, that is, $M'_{-2}=M_{-2}$.

The period domain associated to $(V(P,Q),W_\bullet)$ is $\cc^h \times U_h$, and the associated algebraic group has $\qq$-points 
\[ G(V(P,Q),W_\bullet)(\qq) = \left\{ \begin{pmatrix}[c|c|c]
1 & 0 & 0 \\ \hline
m & A & B \\ \hline
n & C & D
\end{pmatrix}  \, :  \, m, n \in \qq^h \, , \, \begin{pmatrix}[c|c]
A & B \\ \hline C & D \end{pmatrix} \in \Sp(2h,\qq)
\right\} \, .  \]
The action of $G(V(P,Q),W_\bullet)(\rr)$ on $\cc^h \times U_h$  is given by
\[ \begin{pmatrix}[c|c|c]
1 & 0 & 0 \\ \hline
m & A & B \\ \hline
n & C & D
\end{pmatrix} (v,M) = (v+m+Mn,(AM+B)(CM+D)^{-1}) \, , \, v \in \cc^h \, , \, M \in U_h \, .
\]

Varying $t \in Y$ and then taking $F^0$ we obtain a $(\cc^h \times U_h)/\Gamma$-valued period map
\[ (\delta,\Omega) \colon Y \to (\cc^h \times U_h)/ \Gamma \]
associated to the variation $\vv(P,Q)$. For each $t \in Y$, the vector $\delta(t)$ is a lift of the Abel-Jacobi element in $\cc^h/(\zz^h + \Omega(t)\zz^h)$ associated to the divisor $P(t)-Q(t)$ of $X_t$. 

\begin{prop} \label{asymptAJ} 
(a) There exist  a holomorphic single-valued map $\alpha \colon \Delta^n \to \cc^h$ and vectors $b_1,\ldots, b_m \in \qq^h$ with $A_jb_j \in \zz^h$  for $j=1,\ldots,m$ such that for $(z,t) \in U^m \times \Delta^{n-m}$ with $e(z)$ sufficiently close to zero the equality 
\[  \delta(e(z),t) = \sum_{j=1}^m A_jb_j z_j + \alpha(e(z),t)
\]
holds in $\cc^h$.
(b) Writing $ \alpha = \genfrac{(}{)}{0pt}{}{\alpha_1}{\alpha_2} $ with $\alpha_1 \colon \Delta^n \to \cc^r$ we have that the vector $\alpha_2(0)$ lifts the Abel-Jacobi element $\int_{Q(0)}^{P(0)}$ of $E=\Jac(\tilde{\Xbar}_0)$ determined by integrating elements of $\omega_{\tilde{\Xbar}_0 }(\tilde{\Xbar}_0)$ along paths from $P(0)$ to $Q(0)$ on $\Xbar_0$.
\end{prop}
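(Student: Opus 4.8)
The plan is to mimic exactly the strategy used for Proposition \ref{asymptperiod}, but now applied to the admissible graded-polarized variation $\vv(P,Q)$ whose period map is $(\delta,\Omega)$, taking advantage of the structural analysis of the monodromy cone on $V(P,Q)_\qq$ carried out just above the statement. For part (a), I would start from Theorem \ref{nilpotentorbit}(a): the untwisted period map $\psi$ attached to $\vv(P,Q)$ extends holomorphically over $\Delta^n$. Concretely, writing $N_j$ for the local monodromy logarithm acting on $V(P,Q)$ in the block form $N_j=\genfrac{(}{)}{0pt}{}{\,0\,\,0\,}{m_j\,\,A_j}$ coming from the matrix description of $G(V(P,Q),W_\bullet)$ (where $m_j\in\qq^h$ is the ``new'' column and $A_j$ the old symmetric semidefinite block), the action of $\exp(\sum_j z_jN_j)$ on $(v,M)\in\cc^h\times U_h$ produces a translation of the $\cc^h$-coordinate by $\sum_j(m_j+Mn_j)z_j$; but the relevant monodromy here is unipotent with $N^2=0$ and trivial on $\Gr_0^W$, so the $n_j$ part vanishes and one is left with a translation by $\sum_j m_j z_j$ on the flat coordinate. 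Matching this with $\psi$ holomorphic over $\Delta^n$ gives $\delta(e(z),t)=\sum_{j=1}^m m_j z_j + \alpha(e(z),t)$ with $\alpha$ single-valued holomorphic on $\Delta^n$. It then remains to identify $m_j$ with $A_jb_j$ for suitable $b_j\in\qq^h$: this is exactly the statement that $\Im(N_j)\subset\Im(A_j)$, equivalently that the ``new'' column $m_j$ lies in the column span of $A_j$ over $\qq$; this follows from the fact, established in the text, that $M'_{-2}=M_{-2}=\Im(N_j)=\Im(N_j|_{V_\qq})$, i.e. the image of $N_j$ on $V(P,Q)$ already lies in (and equals) its image on $V$, whence $m_j=A_jb_j$ for some $b_j\in\qq^h$, and $A_jb_j=m_j\in\zz^h$ because $N_j$ is an integral operator. (The single-valuedness of $\alpha$ on $\Delta^n$ in the remaining $\Delta^{n-m}$ directions is automatic as there is no monodromy there.)

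For part (b), I would invoke Theorem \ref{nilpotentorbit}(b): the value $\psi(0)$, hence $(\alpha(0),\psi(0))$ in the $(\cc^h\times U_h)$-picture, equals the Hodge filtration $\tilde\ff'^0(0)$ of the limit mixed Hodge structure of $\vv(P,Q)$ on the chosen basis $(e_0,e_1,\dots,e_h,f_1,\dots,f_h)$. Unwinding the definition of the period vector of a mixed Hodge structure from Section \ref{jacobians}, there is a generator $v_0\in\tilde\ff'^0(0)V(P,Q)_\cc$ of the form $v_0=e_0+\sum_{j=1}^h\alpha(0)_j e_j$. Now one passes to the graded piece: the relevant sub-MHS is $M'_{-1}+\qq e_0=\rmH_1(\Xbar_0,\{P(0),Q(0)\})_\qq$ inside $V(P,Q)_\qq$, and modding out by $M'_{-2}=\rmH^1(G)$ (spanned by $e_1,\dots,e_r$) gives the extension of $\zz(0)$ by $\rmH_1(\tilde\Xbar_0)$ whose class is the Abel-Jacobi element in $E=\Jac(\tilde\Xbar_0)$. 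Since $v_0\in M'_{-1,\cc}+\cc e_0$ (the components along $e_{r+1},\dots$ surviving), the projection of $v_0$ to $\Gr^{M'}_{-1}\oplus\cc e_0$ reads $\bar e_0 + \sum_{j=r+1}^h\alpha_2(0)_{j-r}\,\bar e_j$ (plus, possibly, terms that are absorbed — here I'd note that the $f$-components of $v_0$ are controlled by $\psi_{22}(0)$, which by Proposition \ref{asymptperiod}(b) is the period matrix of $\Gr_{-1}^M V\cong\rmH_1(\tilde\Xbar_0)$, so the relative-homology contribution is exactly captured by $\alpha_2(0)$). Hence $\alpha_2(0)$ is a lift of the period vector of the extension class, i.e. of $\int_{Q(0)}^{P(0)}$ in $E$; by Lemma \ref{AJdescription} this element is precisely the functional obtained by integrating sections of $\omega_{\tilde\Xbar_0}(\tilde\Xbar_0)\subset\omega_{\Xbar_0}(\Xbar_0)$ along paths from $P(0)$ to $Q(0)$ on $\Xbar_0$, which is the asserted description.

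The main obstacle, I expect, is part (b): one must be careful that taking the graded piece $\Gr^{W}$ of the limiting MHS of the variation $\vv(P,Q)$ really returns the limiting MHS of the sub/quotient variations — here the quotient $\Gr_{-1}^W\vv(P,Q)=\vv$ and the further graded piece $\Gr_{-1}^{M}V\cong\rmH_1(\tilde\Xbar_0)$ — and that the limit of the extension class of $\rmH_1(X_t,\{P_t,Q_t\})$ (after projecting from $F$ to $E$) equals the extension class of the limiting relative homology $\rmH_1(\Xbar_0,\{P(0),Q(0)\})$ of the special fiber. This compatibility rests on the Picard–Lefschetz description of $M_{-1}\cong\rmH_1(\Xbar_0)_\qq$ and its extension to $M'_{-1}+\qq e_0\cong\rmH_1(\Xbar_0,\{P(0),Q(0)\})_\qq$ invoked in the text just before the statement, together with strictness of the morphisms in the category of (limit) mixed Hodge structures; the bookkeeping of which basis vectors survive each graded step, and checking that the $f_i$-coordinates of $v_0$ do not contaminate the formula for $\alpha_2(0)$ (this is where Proposition \ref{asymptperiod}(b) is genuinely used), is the one place where a careful, if routine, computation is unavoidable.
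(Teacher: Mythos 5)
Your proposal follows essentially the same route as the paper: part (a) from the block shape of the monodromy logarithms on $(e_0,e_1,\ldots,e_h,f_1,\ldots,f_h)$ — the key input being the equality $\Im(N_j)=\Im(N_j|_{V_\qq})$ established just before the statement — together with Theorem \ref{nilpotentorbit}(a), and part (b) from Theorem \ref{nilpotentorbit}(b), the Picard--Lefschetz identification of $M'_{-1}+\qq e_0$ with $\rmH_1(\Xbar_0,\{P(0),Q(0)\})_\qq$, passage to the quotient by $\rmH^1(G)$, and Lemma \ref{AJdescription}, exactly as in the text. Two harmless slips: the vanishing of the $f$-block of $N_je_0$ does not follow from triviality on $\Gr_0^W$ (that only gives $\Im N_j\subseteq V_\qq$) but from the same equality $\Im(N_j)=\Im(N_j|_{V_\qq})\subseteq\spann(e_1,\ldots,e_r)$ that you invoke a line later, and $v_0$ has no $f$-components at all by the normalization defining the period vector, so Proposition \ref{asymptperiod}(b) is used not to control such components but to ensure that $(q(v_{r+1}),\ldots,q(v_h))$ is the normalized basis of $\tilde{\ff}^0(0)U_\cc$, so that $\alpha_2(0)$ really is a period vector of $U(P,Q)$.
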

\begin{proof} Let $N_j$ denote the local monodromy operator of $V(P,Q)$ around the branch of $D$ determined by $t_j=0$. The equality $\Im(N_j)=\Im(N_j|_{V_\qq})$ that we established above shows that $N_j$ has a matrix
\[ N_j = \begin{pmatrix}[c|c|c]
0 & 0 & 0 \\ \hline
A_jb_j & 0 & A_j \\ \hline
0 & 0 & 0
\end{pmatrix} \]
on the basis $(e_0,e_1,\ldots,e_h,f_1,\ldots,f_h)$, for some $b_j \in \qq^h$ such that $A_jb_j \in \zz^h$ for each $j=1,\ldots,m$. Then for all $(v,M) \in \cc^h \times U_h$ and $z_j \in U$ we have $\exp(N_jz_j)(v,M)=(v+A_jb_jz_j,M+A_jz_j)$, and we find part (a) of the proposition by applying Theorem \ref{nilpotentorbit}(a). 
As for part (b), we know by Theorem \ref{nilpotentorbit}(b) that $\alpha(0)$ is the period vector of the limit mixed Hodge structure $(V(P,Q),\tilde{\ff}'^\bullet(0),M'_\bullet)$ of the variation $\boldsymbol{V}(P,Q)$. That is, the normalized basis $(v_1,\ldots,v_h)$ of $\tilde{\ff}^0(0)V_\cc$ on the symplectic basis $(e_1,\ldots,e_h,f_1,\ldots,f_h)$ that we obtained in the proof of Theorem \ref{asymptperiod}(b) can be extended to a basis $(v_0,v_1,\ldots,v_h)$ of $\tilde{\ff}'^0(0)V(P,Q)_\cc$ in such a way that the identity $v_0 = e_0 + \sum_{i=1}^h \alpha(0)_ie_i$ holds in $V(P,Q)_\cc$. As we discussed above, the sub-mixed Hodge structure $H_1(\Xbar_0)+\zz e_0$ of $(V(P,Q),\tilde{\ff}'^\bullet(0),M'_\bullet)$ can be realized as the extension $\rmH_1(\Xbar_0,\{P(0),Q(0)\})$ of $\rmH_1(\Xbar_0)$. Let $U=H_1(\Xbar_0)/H^1(G)$ and $U(P,Q)=(H_1(\Xbar_0)+\zz e_0)/H^1(G)$. We then obtain an extension
\begin{equation} \label{Uextension} 0 \to U  \to U(P,Q) \to  \zz(0) \to 0 
\end{equation}
of mixed Hodge structures. From Lemma \ref{AJdescription} we obtain that the element of $E=\mathrm{Ext}^1_{\mathrm{MHS}}(\zz(0),U)$ corresponding to (\ref{Uextension}) is the Abel-Jacobi element determined by integrating elements of $\omega_{\tilde{\Xbar}_0 }(\tilde{\Xbar}_0)$ along paths from $P(0)$ to $Q(0)$ on $\Xbar_0$. Our task is thus to show that 
$\alpha_2(0)=(\alpha(0)_{r+1},\ldots,\alpha(0)_h)$ is a period vector of the mixed Hodge structure $U(P,Q)$. 
Let 
\[ p \colon \tilde{\ff}^0(0)V_\cc \cap M_{-1,\cc} \to \tilde{\ff}^0(0)(M_{-1,\cc}/M_{-2,\cc})=\tilde{\ff}^0(0)U_\cc \]
be the projection. We recall from the proof of Theorem \ref{asymptperiod}(b) that $v_{r+1},\ldots,v_h \in M_{-1,\cc}$ and that $(p(v_{r+1}),\ldots,p(v_h))$ is a basis of $\tilde{\ff}^0(0)U_\cc$.  Next we note that $v_0 \in M'_{-1,\cc}+\cc e_0$. Let 
\[ q \colon \tilde{\ff}'^0(0)V(P,Q)_\cc \cap (M'_{-1,\cc}+\cc e_0) \to \tilde{\ff}'^0(0)(M'_{-1,\cc}+\cc e_0)/M'_{-2,\cc}= \tilde{\ff}'^0(0)U(P,Q)_\cc \] 
be the projection. Then $(q(v_0),q(v_{r+1}),\ldots,q(v_h))$ is a basis of $\tilde{\ff}'^0(0) U(P,Q)_\cc$ extending the basis $(q(v_{r+1}),\ldots,q(v_h))=(p(v_{r+1}),\ldots,p(v_h))$ of $\tilde{\ff}^0(0)U_\cc$.
Moreover we have the identity $ q(v_0) = \bar{e}_0 + \sum_{i=r+1}^h \alpha(0)_i\bar{e}_i$ in $U(P,Q)_\cc$. This shows that $(\alpha(0)_{r+1},\ldots,\alpha(0)_h)$ is a period vector of $U(P,Q)$.
\end{proof}

\section{Lear extensions} \label{lear}

The notion of Lear extension is introduced by Hain in \cite{hrar} \cite{hain_normal}. The notion has turned out to be  a very useful tool in formulating and analyzing asymptotic properties of Hodge theoretic and Arakelov theoretic invariants. The terminology is justified by Theorem \ref{generalexistence} below, which was first obtained by D.~Lear in his PhD thesis \cite{lear}.

Let $Y \subset \Ybar$ be complex manifolds, and assume that $D = \bar{Y} \setminus Y$ is a normal crossings divisor on $\Ybar$. Let $L$ be a holomorphic line bundle on $Y$. An extension of $L$ over $\Ybar$ consists of a pair $(\Lbar, \alpha \colon \Lbar|_Y \isom L)$, where $\Lbar$ is a holomorphic line bundle on $\Ybar$ and $\alpha$ is an isomorphism of holomorphic line bundles. There is a natural notion of isomorphism of extensions $(\Lbar, \alpha)$. We denote the set of isomorphism classes of extensions of $L$ by $\Pic_L(\Ybar)\subset \Pic(\Ybar)$. 

Assume that $L$ is equipped with a continuous hermitian metric $\|\cdot\|$.
A Lear extension of $(L,\|\cdot\|)$ over $\bar{Y}$ is an extension $(\Lbar, \alpha)$ of $L$ over $\Ybar$ with the property that the continuous metric $\alpha^*\|\cdot\|$ on $\Lbar|_Y$ extends as a continuous metric over $\overline{L}|_{\bar{Y} \setminus D^{\mathrm{sing}}}$. 
Note that if $(L,\|\cdot\|)$ has a Lear extension over $\Ybar$, the underlying line bundle $\Lbar$ is unique up to isomorphism, since $D^\mathrm{sing}$ lies in codimension $\geq 2$ on $\Ybar$. 

We conclude that all Lear extensions yield the same class in $\Pic_L(\Ybar)$, and usually we think of ``the'' Lear extension of $(L,\|\cdot\|)$ as this class in $\Pic_L(\Ybar)$. By slight abuse of language, if a tensor power $L^{\otimes N}$ has a Lear extension $(\overline{L^{\otimes N}},\alpha)$, we also say that $L$ has a Lear extension, and we call the class of the $\qq$-line bundle $\frac{1}{N} \overline{L^{\otimes N}}$ in $\Pic_L(\Ybar) \otimes \qq$ its Lear extension. Note that this class is independent of the choice of $N$. 

Let $p \in D \setminus D^\sing$. We say a coordinate chart $(t_1,\ldots,t_n) \colon U \isom \Delta^n$ of $\bar{Y}$ with center $p$ is adapted to $D$ if $D \cap U$ is given by the equation $t_1=0$. Clearly one can verify Lear extendability over $D$ locally on coordinate charts adapted to $D$. 
Write $D_1$ for the divisor of $\Delta^n$ given by the equation $t_1=0$. The following gives a criterion for continuous extendability of a continuous hermitian line bundle on $\Delta^n \setminus D_1=\Delta^* \times \Delta^{n-1}$ over $\Delta^n$, and hence for Lear extendability in general.  
\begin{prop} \label{learchar} Let $(L,\|\cdot\|)$ be a continuous hermitian line bundle over $\Delta^n \setminus D_1$, and let $M$ be a holomorphic line bundle on $\Delta^n$ coinciding with $L$ on $\Delta^n \setminus D_1$. Let $\mu \in \qq$. Let $s$ be a generating section of $M$ over $\Delta^n$. The following assertions are equivalent: (i) the function $\log\|s\| - \mu \log |t_1|$ on $\Delta^n \setminus D_1$ extends continuously over $\Delta^n$, (ii) the continuous hermitian line bundle $(L,\|\cdot\|)$ has a Lear extension $\Lbar$ over $\Delta^n$ and we have $\Lbar = M+\mu D_1$.
\end{prop}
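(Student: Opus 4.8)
The plan is to identify the candidate extension $M + \mu D_1$ with a concrete holomorphic line bundle carrying an explicit generating section, so that both conditions (i) and (ii) become assertions about one and the same function on $\Delta^n \setminus D_1$. The key reduction is that, since $D_1$ is smooth, $D_1^\sing = \emptyset$, so a Lear extension over $\Delta^n$ is nothing but an extension of $L$ over which $\|\cdot\|$ extends continuously over all of $\Delta^n$; and a continuous metric on a line bundle extends continuously across $D_1$ precisely when the log-norm of a local generating section does.

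First I would treat the case $\mu \in \zz$. Since $s$ is a generating, hence nowhere vanishing, section of $M$, the rational section $t_1^{-\mu}s$ of $M$ has divisor $-\mu D_1$ on $\Delta^n$, hence is a generating section of the line bundle $M \otimes \oo_{\Delta^n}(\mu D_1)$. Restricting to $\Delta^n \setminus D_1$, where $\oo_{\Delta^n}(\mu D_1)$ is canonically trivial, this identifies $M \otimes \oo_{\Delta^n}(\mu D_1)$ with $M$, and hence with $L$; under this identification the generating section $t_1^{-\mu}s$ restricts to the section $t_1^{-\mu}s$ of $L$, whose norm satisfies
\[ \log\| t_1^{-\mu}s\| = \log\|s\| - \mu\log|t_1| \]
on $\Delta^n \setminus D_1$. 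Combining this with the two observations above: condition (i) holds if and only if $\|\cdot\|$ extends to a continuous metric on $M \otimes \oo_{\Delta^n}(\mu D_1)$ over $\Delta^n$, if and only if $M \otimes \oo_{\Delta^n}(\mu D_1)$ (viewed as an extension of $L$ via the canonical identification on the complement of $D_1$) is a Lear extension of $(L,\|\cdot\|)$ over $\Delta^n$ — which, as its underlying bundle represents the class $M + \mu D_1$ in $\Pic_L(\Delta^n)$, is exactly condition (ii). Uniqueness of the underlying bundle of a Lear extension (recalled in Section~\ref{lear}) guarantees there is no ambiguity here.

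For general $\mu = a/N$ with $a \in \zz$ and $N \in \zz_{>0}$, I would reduce to the integral case by passing to $N$-th tensor powers. The metric on $L$ induces one on $L^{\otimes N}$ for which $\log\|s^{\otimes N}\| = N\log\|s\|$, so (i) for the datum $(L,s,\mu)$ is equivalent to (i) for $(L^{\otimes N}, s^{\otimes N}, a)$; and by the convention on Lear extensions of $\qq$-line bundles from Section~\ref{lear}, (ii) for $(L,\mu)$ is by definition the statement that $L^{\otimes N}$ has a Lear extension equal to $M^{\otimes N} + aD_1$, i.e.\ (ii) for $(L^{\otimes N}, a)$. The integral case already established then yields the equivalence.

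The argument is essentially bookkeeping, and I do not anticipate a genuinely hard step. The one place that requires care is the identification of $M \otimes \oo_{\Delta^n}(\mu D_1)$, restricted to $\Delta^n \setminus D_1$, with $L$, together with the verification that its canonical generating section is $t_1^{-\mu}s$ there — that is, getting the direction and sign of the twist by $D_1$ correct — as well as the routine but error-prone passage to integral $\mu$ via tensor powers and the accompanying $\qq$-line bundle conventions.
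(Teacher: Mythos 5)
Your proposal is correct and follows essentially the same route as the paper: the paper also passes to the twisted bundle $M+\mu D_1$ with generating section $s'=t_1^{-\mu}s$, uses $\log\|s'\|=\log\|s\|-\mu\log|t_1|$, and exploits that $D_1$ is smooth so that Lear extendability is just continuous extendability of the metric. The only difference is that the paper treats $\mu\in\qq$ directly in the language of $\qq$-line bundles, whereas you spell out the reduction to integral $\mu$ via $N$-th tensor powers, which is just making the same convention explicit.
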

\begin{proof} Write $M(\mu)$ as a shorthand for $M+\mu D_1$. Note that $M(\mu)$ coincides with $L$ over $\Delta^n \setminus D_1$, and that the rational section $s'=t_1^{-\mu}s$ is a generating section of $M(\mu)$ over $\Delta^n$. We have that $M(\mu)$ can be equipped with the structure of continuous hermitian $\qq$-line bundle extending $(L,\|\cdot\|)$ over $\Delta^n$ if and only if $\log \|s'\|$ extends continuously over $\Delta^n$. Since $\log \|s'\| = \log \|s\| - \mu \log |t_1|$ we obtain the equivalence of assertions (i) and (ii).
\end{proof}
Now assume that $\Ybar$ is a smooth complex algebraic variety, with $D=\Ybar \setminus Y$ a normal crossings divisor on $\Ybar$. Let $\vv$ be a polarized variation of Hodge structures  of weight $-1$ over $Y$. Following \cite[Section~5]{hain_normal} we have canonically associated to $\vv$ a family of intermediate jacobians $J(\vv)\to Y$. The total space $J(\vv)$ carries a Poincar\'e line bundle $\bb$, rigidified along the origin and equipped with a canonical $C^\infty$ hermitian metric. The main result of Lear's thesis \cite{lear}, reproduced in \cite[Corollary~6.4]{hain_normal}, is the following extension result. 
\begin{thm} \label{generalexistence} Let $\nu \colon Y \to J(\vv)$ be a normal function section (cf. \cite[Definition 5.2]{hain_normal}) of the family of intermediate jacobians $J(\vv) \to Y$. Then the $C^\infty$-hermitian line bundle $L=\nu^* \bb$ on $Y$ has a Lear extension over $\bar{Y}$.
\end{thm}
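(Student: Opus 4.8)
The plan is to reduce to a one-variable degeneration and then make the metric on $\nu^*\bb$ completely explicit through the period matrix and the Abel--Jacobi vector, controlling both by the Nilpotent Orbit Theorem.

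First I would use that a Lear extension is by definition only required to extend continuously over $\Ybar \setminus D^\sing$, together with $\codim(D^\sing,\Ybar)\geq 2$, to localize the problem: it suffices to produce a Lear extension on coordinate charts adapted to $D$, the local pieces then gluing uniquely. So I may assume $Y = \Delta^* \times \Delta^{n-1} \subset \Delta^n = \Ybar$ with $D = D_1 = \{t_1=0\}$, and (possibly after a finite base change $t_1 \mapsto t_1^k$, which is harmless for Lear extendability since the relevant continuous extensions descend along the cover) that the local monodromy is unipotent. By Proposition \ref{learchar} the goal in this model is to find a holomorphic line bundle $M$ on $\Delta^n$ restricting to $L=\nu^*\bb$ on $Y$, a rational number $\mu$, and a generating section $s$ of $M$ over $\Delta^n$, with $\log\|s\| - \mu\log|t_1|$ continuous across $t_1=0$.

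Next I would write the metric down. The normal function corresponds to an admissible graded-polarized variation $0 \to \vv \to \vv' \to \zz(0) \to 0$; fixing a symplectic frame of a reference fibre of $\vv$ as in Section \ref{asympresults} makes the period map of $\vv$ into a period matrix $\Omega \colon Y \to U_h$ and that of $\vv'$ into a pair $(\delta,\Omega)$, where $\delta$ is a local lift of $\nu$ to $\cc^h$ via the uniformization $p$. Pulling the formula of Proposition \ref{normbiext} back along $\nu$ then gives, on a chart over the central jacobian fibre, a generating section $s_0$ of $L$ over $Y$ with
\[ \log\|s_0\| = \log|f| - 2\pi\,(\Im\delta)^t(\Im\Omega)^{-1}(\Im\delta) \, , \]
$f$ holomorphic and nowhere zero; one checks the right-hand side descends to $Y$. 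Now I would invoke the Nilpotent Orbit Theorem: exactly as in Propositions \ref{asymptperiod}(a) and \ref{asymptAJ}(a) (whose proofs use only Theorem \ref{nilpotentorbit} and the block shape of the monodromy logarithms), there are an integral symmetric positive semidefinite matrix $A_1$, a rational vector $b_1$ with $A_1 b_1 \in \zz^h$, and holomorphic maps $\psi\colon\Delta^n \to S(h\times h,\cc)$ and $\alpha\colon\Delta^n\to\cc^h$ such that, with $z_1 = \tfrac{1}{2\pi i}\log t_1$,
\[ \Omega = A_1 z_1 + \psi \, , \qquad \delta = A_1 b_1\, z_1 + \alpha \, . \]
Since $A_1$ is real, $\Im\Omega = (\Im z_1)A_1 + \Im\psi$ and $\Im\delta = (\Im z_1)A_1 b_1 + \Im\alpha$, with $\Im\psi,\Im\alpha$ continuous up to $t_1=0$ (this in particular exhibits the single-valuedness just claimed). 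Expanding $Q:=(\Im\delta)^t(\Im\Omega)^{-1}(\Im\delta)$ as $\Im z_1 = -\tfrac{1}{2\pi}\log|t_1| \to +\infty$ should give $Q = -\tfrac{1}{2\pi}(b_1^t A_1 b_1)\log|t_1| + R(t)$ with $R$ continuous up to $t_1=0$, hence $\log\|s_0\| = \mu\log|t_1| + (\text{continuous})$ with $\mu = b_1^t A_1 b_1 \in \qq_{\geq 0}$, and Proposition \ref{learchar} delivers the Lear extension $\Lbar = M + \mu D_1$. Repeating over a cover of the central jacobian fibre and gluing by uniqueness finishes the argument.

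The hard part is the expansion of $Q$ when $A_1$ is degenerate. For invertible $A_1$ it is a Neumann-series computation: $(\Im z_1\, A_1 + \Im\psi)^{-1} = \tfrac{1}{\Im z_1}A_1^{-1} + O((\Im z_1)^{-2})$, whence the leading coefficient $b_1^t A_1 b_1$ and a bounded remainder built from $\Im\psi,\Im\alpha$, continuous up to $t_1=0$. When $\ker A_1 \neq 0$ one must instead control $(\Im\Omega)^{-1}$ transverse to $\Im A_1$; the point is that the polarization forces $\Im\Omega>0$ on all of $Y$ and that the $SL_2$-orbit theorem guarantees $\Im\psi$ stays positive definite on $\ker A_1$ up to $t_1=0$ --- equivalently, $\Gr^M_{-1}$ of the limit mixed Hodge structure carries a pure polarized Hodge structure, whose period matrix is the corner $\psi_{22}(0)$ of Proposition \ref{asymptperiod}(b). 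A Schur-complement decomposition of $\Im\Omega$ along $\Im A_1 \oplus \ker A_1$ then shows that the block of $(\Im\Omega)^{-1}$ pairing against the leading term $(\Im z_1)A_1 b_1$ of $\Im\delta$ equals $\tfrac{1}{\Im z_1}A_1^{+} + O((\Im z_1)^{-2})$ on $\Im A_1$, while all remaining contributions stay bounded and continuous up to $t_1=0$. This yields the stated leading term and continuous remainder, and hence the Lear extension.
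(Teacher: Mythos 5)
The paper itself offers no proof of this theorem: it is quoted from Lear's thesis \cite{lear} as reproduced in \cite[Corollary~6.4]{hain_normal}, so there is no internal argument to compare yours against. What you propose is, in substance, the analysis the author does carry out later for the particular normal functions he needs (proof of Proposition \ref{maintechnical}): reduction to a chart adapted to $D$, the expansions $\Omega=Az_1+\psi$ and $\delta=Ab\,z_1+\alpha$ from Propositions \ref{asymptperiod}(a) and \ref{asymptAJ}(a), inversion of $\Im\Omega$ by a Schur complement with the boundary invertibility of the $(2,2)$ block supplied by the polarized pure structure on $\Gr^M_{-1}$ of the limit mixed Hodge structure, and the conclusion via Proposition \ref{learchar}. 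In the situation actually used in this paper ($\vv$ of type $\{(-1,0),(0,-1)\}$, i.e.\ families of jacobians) your computation is correct, and it buys an explicit identification of the coefficient $\mu=b^tAb$ that the bare citation does not give.

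Two caveats. First, the theorem as stated is for an arbitrary polarized variation of weight $-1$, while your proof is tied to the level-one case: Siegel period matrices, the metric formula of Proposition \ref{normbiext}, and $N^2=0$ are not available for higher-level weight $-1$ variations, and that is exactly where Lear, Hain and Pearlstein's $SL_2$-orbit machinery genuinely enter; so you have proved the special case the paper needs, not the general statement being cited. Second, you attribute the shape $\delta=Ab\,z_1+\alpha$ to ``Theorem \ref{nilpotentorbit} and the block shape of the monodromy logarithms'', but the block shape of $N'$ --- namely $N'e_0\in\Im(N|_V)$, so that the linear term of $\delta$ lies in the column space of $A$ --- is precisely where admissibility is used, via the Steenbrink--Zucker strictness argument \cite{sz} of Section \ref{asympresults}; without it the quadratic form would pick up a term growing like $(\log|t_1|)^2$ transverse to $\Im A$ and no Lear extension would exist, so this hypothesis should be flagged explicitly. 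A smaller point: your local extensions glue only over $\Ybar\setminus D^\sing$, and producing a line bundle on all of $\Ybar$, as the definition requires, uses that $\Ybar$ is a smooth algebraic variety and $D^\sing$ has codimension at least two; this deserves a sentence rather than being folded into ``gluing uniquely''.
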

The case of special interest to us is given by sections of jacobians obtained from relative degree zero divisors on families of curves with nodal degeneration over $D$, cf. the end of Section~\ref{sec:metrization}.

\section{Key isometries} \label{sec:keyisom}

We continue with the notation set in Section~\ref{sec:metrization}.
We draw here a few consequences of the canonical isometry (\ref{delignepoinc}). Let $h>0$ denote the genus of the fibers of the family $\pi \colon X \to Y$, and assume that a section $P \colon Y \to X$ of $\pi$ is given. 
\begin{prop} \label{kappa} Let $\kappa \colon Y \to J$ be the section of the jacobian fibration $j \colon J \to Y$ associated to the relative degree zero divisor $E=(2h-2)P-\omega$ on $X$ over $Y$. Then one has a canonical isometry
\[ \kappa^* \bb \isom \pair{\opar,\omar}^{\otimes 4h(h-1)} \otimes \pair{\omar,\omar}^{\otimes -1}  \]
of $C^\infty$ hermitian line bundles on $Y$.
\end{prop}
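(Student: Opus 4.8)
The plan is to express everything through the canonical isometry (\ref{delignepoinc}), which for a relative degree zero divisor $E$ on $X$ with associated section $\nu\colon Y\to J$ gives $\nu^*\bb\isom\pair{\oo(E)_\Ar,\oo(E)_\Ar}^{\otimes-1}$. Applying this to $E=(2h-2)P-\omega$ and $\nu=\kappa$, the task reduces to computing the Deligne self-pairing $\pair{\oo(E)_\Ar,\oo(E)_\Ar}$ in terms of $\pair{\opar,\omar}$ and $\pair{\omar,\omar}$.

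First I would use bi-multiplicativity of the Deligne pairing, together with the fact that the metric on $\oo(E)_\Ar$ is by definition the one derived from the Arakelov-Green's function (so $\oo(E)_\Ar=\opar^{\otimes(2h-2)}\otimes\omar^{\otimes-1}$ as $C^\infty$ hermitian line bundles on $X$), to expand
\[
\pair{\oo(E)_\Ar,\oo(E)_\Ar}\isom\pair{\opar,\opar}^{\otimes(2h-2)^2}\otimes\pair{\opar,\omar}^{\otimes-2(2h-2)}\otimes\pair{\omar,\omar}
\]
as $C^\infty$ hermitian line bundles on $Y$, using the symmetry isometry $\pair{L,M}\isom\pair{M,L}$ from (\ref{delignenorm}) to collect the two mixed terms. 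Next I would invoke the isometry (\ref{adj}), namely $\pair{\opar,\opar}\isom\pair{\opar,\omar}^{\otimes-1}$, to eliminate $\pair{\opar,\opar}$ entirely. Substituting, the exponent of $\pair{\opar,\omar}$ becomes $-(2h-2)^2-2(2h-2)=-(2h-2)\bigl((2h-2)+2\bigr)=-2h(2h-2)=-4h(h-1)$, so that
\[
\pair{\oo(E)_\Ar,\oo(E)_\Ar}\isom\pair{\opar,\omar}^{\otimes-4h(h-1)}\otimes\pair{\omar,\omar}.
\]
Taking the inverse and combining with (\ref{delignepoinc}) then yields $\kappa^*\bb\isom\pair{\opar,\omar}^{\otimes4h(h-1)}\otimes\pair{\omar,\omar}^{\otimes-1}$, as claimed.

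There is no serious obstacle here: the only thing to be careful about is bookkeeping of signs and exponents, and checking that every isomorphism in sight is genuinely an isometry rather than merely an isomorphism of line bundles — but this is guaranteed by the cited facts, since (\ref{delignenorm}) states the symmetry isomorphism is an isometry, (\ref{adj}) is an isometry of $C^\infty$ hermitian line bundles, (\ref{delignepoinc}) is a canonical isometry, and bi-multiplicativity of the Deligne pairing respects the canonical metrics by the explicit formula (\ref{defmetricpairing}). One minor point worth a sentence in the write-up is that the identification $\oo(E)_\Ar\isom\opar^{\otimes(2h-2)}\otimes\omar^{\otimes-1}$ is an isometry precisely because the metric on $\oo(E)_\Ar$ was defined fiberwise via $g_\Ar$ and is therefore multiplicative in $E$; this is the content of the remark preceding (\ref{delignepoinc}) that the metric on $\oo(E)_\Ar$ comes from the Arakelov-Green's function.
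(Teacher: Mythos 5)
Your proposal is correct and follows essentially the same route as the paper: apply the canonical isometry (\ref{delignepoinc}) to the divisor $E=(2h-2)P-\omega$, expand the self-pairing bi-multiplicatively, and use the adjunction isometry (\ref{adj}) to eliminate $\pair{\opar,\opar}$, with the exponent bookkeeping $-(2h-2)^2-2(2h-2)=-4h(h-1)$ exactly as in the paper. The only difference is that you spell out the expansion that the paper leaves implicit.
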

\begin{proof} By equation (\ref{delignepoinc}) we have a canonical isometry 
\[ \kappa^* \bb \isom \pair{(2h-2)\opar-\omega_\Ar,(2h-2)\opar-\omega_\Ar}^{\otimes -1} \, . \] 
We obtain the result upon expanding the right hand side using
the canonical adjunction isometry $\pair{\opar,\opar} \isom \pair{\opar,\omar}^{\otimes -1}$ from (\ref{adj}).
\end{proof}

If $X \to Y$ has two sections $P, Q \colon Y \to X$, we let $\delta \colon Y \to J$ be the section of $j \colon J \to Y$ associated to the relative degree zero divisor $E=P-Q$ of $X$ over $Y$. 

\begin{prop} \label{diagonal_metric} Assume $X \to Y$ has two sections $P, Q \colon Y \to X$. Then we have a canonical isometry
\[  \delta^*\bb  \isom \pair{\opar,\oqar}^{\otimes 2} \otimes \pair{\opar,\omar} \otimes \pair{\oqar,\omar}  \]
of $C^\infty$ hermitian line bundles on $Y$.
\end{prop}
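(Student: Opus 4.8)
The plan is to follow the strategy already used in the proof of Proposition~\ref{kappa}: apply the canonical isometry (\ref{delignepoinc}) to the relative degree zero divisor $E = P-Q$, expand the resulting Deligne self-pairing by bi-multiplicativity, and conclude with the adjunction isometry (\ref{adj}).

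First I would record that $\oo(E)_\Ar \isom \opar \otimes \oqar^{\otimes -1}$ as $C^\infty$ hermitian line bundles on $X$. Indeed, the metric on $\oo(E)$ derived from the Arakelov-Green's function is by construction the one for which the $\log$-norm of the canonical rational section at a point $z$ equals $g_\Ar(P,z) - g_\Ar(Q,z)$, and this is manifestly the tensor product of the metric of $\opar$ with the inverse of the metric of $\oqar$; in particular $\oo(E)_\Ar$ is fiberwise admissible. Hence (\ref{delignepoinc}) yields the canonical isometry
\[ \delta^*\bb \isom \pair{\oo(E)_\Ar,\oo(E)_\Ar}^{\otimes -1} \isom \pair{\opar \otimes \oqar^{\otimes -1},\,\opar \otimes \oqar^{\otimes -1}}^{\otimes -1} \, . \]

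Next I would expand the right-hand side using bi-multiplicativity of the Deligne pairing together with the compatibility of the Deligne metric with tensor products, inverses, and the symmetry isomorphism $\pair{L,M}\isom\pair{M,L}$ from (\ref{delignenorm}). This produces the canonical isometry
\[ \delta^*\bb \isom \pair{\opar,\opar}^{\otimes -1} \otimes \pair{\opar,\oqar}^{\otimes 2} \otimes \pair{\oqar,\oqar}^{\otimes -1} \, . \]
Finally, applying the adjunction isometry (\ref{adj}) both for $P$ and for $Q$, namely $\pair{\opar,\opar}\isom\pair{\opar,\omar}^{\otimes -1}$ and $\pair{\oqar,\oqar}\isom\pair{\oqar,\omar}^{\otimes -1}$, and collecting terms gives exactly
\[ \delta^*\bb \isom \pair{\opar,\oqar}^{\otimes 2} \otimes \pair{\opar,\omar} \otimes \pair{\oqar,\omar} \, , \]
as claimed.

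This argument is entirely formal: the statement is a bookkeeping consequence of (\ref{delignepoinc}), (\ref{adj}), and the multilinear calculus of the metrized Deligne pairing, so there is no genuine obstacle. The only point needing a little care is the identification of $\oo(E)_\Ar$, metric included, with $\opar \otimes \oqar^{\otimes -1}$, which is what licenses splitting the self-pairing; but this is immediate from the additivity of the Arakelov-Green's metrics in the divisor.
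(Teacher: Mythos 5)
Your proposal is correct and follows essentially the same route as the paper: apply the canonical isometry (\ref{delignepoinc}) to the divisor $E=P-Q$, expand the self-pairing $\pair{\oo(P-Q)_\Ar,\oo(P-Q)_\Ar}^{\otimes -1}$ by bi-multiplicativity and symmetry, and finish with the adjunction isometry (\ref{adj}) for both $P$ and $Q$. The only difference is that you make explicit the (correct) identification of $\oo(P-Q)_\Ar$ with $\opar\otimes\oqar^{\otimes -1}$, which the paper leaves implicit.
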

\begin{proof} By (\ref{delignepoinc}) we have a canonical isometry $\delta^*\bb  \isom \pair{\oo(P-Q)_\Ar,\oo(P-Q)_\Ar}^{\otimes -1}$. The stated result follows upon expanding the right hand side, using the canonical adjunction isometry $\pair{\opar,\opar} \isom \pair{\opar,\omar}^{\otimes -1} $ from (\ref{adj}).
\end{proof}
Assume $X \to Y$ has a section $P \colon Y \to X$. Let $\delta_P \colon X \to J$ be the $Y$-morphism given by sending $z \in X_y$ to the class of $P(y)-z$ in $\Jac(X_y)$. 
\begin{prop} \label{deltaPupperstar}  We have a canonical isometry
\[ \delta_P^* \bb \isom \opar^{\otimes 2} \otimes \pi^*\pair{ \opar, \omar} \otimes \omar  \]
of $C^\infty$ hermitian line bundles on $X$.
\end{prop}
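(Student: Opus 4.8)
The plan is to identify $\delta_P^*\bb$ with the pullback of a Poincar\'e line bundle along a section of a relative jacobian, and then to invoke the isometry (\ref{delignepoinc}). First I would base change $\pi\colon X\to Y$ along itself: write $p_1,p_2\colon X\times_Y X\to X$ for the two projections and view $p_1$ as a smooth proper curve of genus $h>0$ with base $X$. Its relative jacobian is $J\times_Y X\to X$, and by compatibility of the Poincar\'e line bundle and of its canonical metric with base change (cf. \cite{hdj} \cite{hain_normal}), the Poincar\'e bundle of this family restricted to the diagonal is isometric to the pullback of $\bb$ along the projection $\mathrm{pr}_J\colon J\times_Y X\to J$.

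Next I would exhibit $\delta_P$ as induced by a section of $J\times_Y X\to X$. Let $\sigma\colon X\to X\times_Y X$ be the diagonal section of $p_1$, and $\rho\colon X\to X\times_Y X$ the section $z\mapsto(z,P(\pi(z)))$; both have image in the smooth locus of $p_1$, and $p_2\circ\sigma=\mathrm{id}_X$, $p_2\circ\rho=P\circ\pi$. The divisor $E:=\rho(X)-\sigma(X)$ has relative degree zero over $X$ and over $z\in X_{\pi(z)}$ restricts to $P(\pi(z))-z$, so the section of $J\times_Y X\to X$ it defines is $\nu:=(\delta_P,\mathrm{id}_X)$. Since $\delta_P=\mathrm{pr}_J\circ\nu$, combining the previous paragraph with (\ref{delignepoinc}) applied to the family $p_1$ and the section $\nu$ gives a canonical isometry
\[ \delta_P^*\bb\isom\pair{\oo(E)_\Ar,\oo(E)_\Ar}^{\otimes -1}, \]
where the Deligne pairing is formed relative to $p_1$ and $\oo(E)_\Ar$ carries the metric built fiberwise from the Arakelov--Green's function.

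It then remains to expand the right hand side and identify its three constituents. Using $\oo(E)=\oo(\rho(X))\otimes\oo(\sigma(X))^{\otimes -1}$ and bi-multiplicativity one gets
\[ \delta_P^*\bb\isom\pair{\oo(\rho)_\Ar,\oo(\rho)_\Ar}^{\otimes -1}\otimes\pair{\oo(\rho)_\Ar,\oo(\sigma)_\Ar}^{\otimes 2}\otimes\pair{\oo(\sigma)_\Ar,\oo(\sigma)_\Ar}^{\otimes -1}. \]
For the last factor the adjunction isometry (\ref{adj}), applied to $p_1$ and $\sigma$, gives $\pair{\oo(\sigma)_\Ar,\oo(\sigma)_\Ar}\isom(\sigma^*(\omega_{X\times_Y X/X})_\Ar)^{\otimes -1}$; as $\omega_{X\times_Y X/X}=p_2^*\omega$ with its fiberwise Arakelov metric and $p_2\circ\sigma=\mathrm{id}$, this equals $\omar^{\otimes -1}$. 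For the first factor, (\ref{adj}) applied to $\rho$, together with $p_2\circ\rho=P\circ\pi$ and the isometry $\pair{\opar,\omar}\isom P^*\omar$ from Section~\ref{sec:metrization}, gives $\pair{\oo(\rho)_\Ar,\oo(\rho)_\Ar}\isom(\pi^*P^*\omar)^{\otimes -1}\isom\pi^*\pair{\opar,\omar}^{\otimes -1}$. For the middle factor, the isometry $\pair{\oo(\rho)_\Ar,M}\isom\rho^*M$ valid for fiberwise admissible $M$ (and $\oo(\sigma(X))_\Ar$ is such), together with $\rho^{-1}\sigma(X)=P(Y)$, gives $\pair{\oo(\rho)_\Ar,\oo(\sigma)_\Ar}\isom\rho^*\oo(\sigma(X))_\Ar\isom\opar$, the last step using the symmetry $g_\Ar(P,z)=g_\Ar(z,P)$ of (\ref{symmetry}). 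Substituting the three identifications yields $\delta_P^*\bb\isom\opar^{\otimes 2}\otimes\pi^*\pair{\opar,\omar}\otimes\omar$.

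The step I expect to demand the most care is the metric bookkeeping. One must check that $(\omega_{X\times_Y X/X})_\Ar$ is genuinely $p_2^*\omar$, so that the residual metric carried by the adjunction isometry restricts to the Arakelov metric after pullback along $\sigma$ and $\rho$; that $\rho^*\oo(\sigma(X))_\Ar$ carries precisely the metric of $\opar$, which rests on the symmetry of the fiberwise Arakelov--Green's function; and, underlying everything, that the isomorphism between $\mathrm{pr}_J^*\bb$ and the Poincar\'e bundle of $p_1$ is an isometry, i.e. that the canonical metric on the Poincar\'e bundle is compatible with base change. None of these is deep, being fiberwise statements about $g_\Ar$, but all need to be spelled out.
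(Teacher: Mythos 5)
Your argument is correct and is essentially the paper's own proof: the paper also base changes along $\pi$, takes the two tautological sections of the first projection, identifies $\delta_P^*\bb$ with the pullback of the Poincar\'e bundle along the associated difference section, and expands via (\ref{delignepoinc}), adjunction, and the standard isometries. The only cosmetic difference is that the paper invokes Proposition \ref{diagonal_metric} to package the expansion (and pulls back $\oo(\tilde{P})_\Ar$ along the diagonal section instead of $\oo(\sigma)_\Ar$ along $\rho$, which avoids the explicit appeal to the symmetry (\ref{symmetry})), whereas you redo that computation in place.
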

\begin{proof} Consider the first projection $\pi_1 \colon X \times_Y X \to X$. It comes equipped with two tautological sections, namely a section denoted $\tilde{P}$ induced from $P$, and the diagonal section $Q$. Let $\tilde{\bb}$ on $X \times_Y J$ denote the pullback of $\bb$  along the second projection $X \times_Y J \to J$. Let $\delta \colon X \to X \times_Y J$ be the section of the projection $X \times_Y J \to X$ associated to the two sections $\tilde{P}$ and $Q$ of $X \times_Y X \to X$.
Then we have a canonical isometry $\delta_P^*\bb \isom \delta^*\tilde{\bb}$. Let $\tilde{\omega}_\Ar = \pi_2^*\omega_\Ar$ denote the relative dualizing sheaf of $\pi_1$ endowed with the Arakelov metric. Then by Proposition \ref{diagonal_metric} we have a canonical isometry
\[ \delta^*\tilde{\bb} \isom \pair{ \oo(\tilde{P})_\Ar, \oqar }^{\otimes 2} \otimes \pair{\oo(\tilde{P})_\Ar, \tilde{\omega}_\Ar } \otimes
\pair{\oqar, \tilde{\omega}_\Ar }    \]
of $C^\infty$ hermitian line bundles on $X$. The proof follows upon observing the canonical isometries
\[ \pair{ \oo(\tilde{P})_\Ar, \oqar } \isom Q^* \oo(\tilde{P})_\Ar \isom \opar \, ,  \,
\pair{\oo(\tilde{P})_\Ar, \tilde{\omega}_\Ar } \isom \pi^*\pair{\opar,\omega_\Ar} \, , \]
and 
\[ \pair{\oqar,\tilde{\omega}_\Ar} \isom Q^* \tilde{\omega}_\Ar \isom \omar 
 \]
of $C^\infty$ hermitian line bundles on $X$.
\end{proof}

\begin{prop} \label{deltasq} Assume $X \to Y$ has a section $P \colon Y \to X$. Then we have a canonical isometry
\[ \pair{\delta_P^*\bb, \delta_P^*\bb} \isom \pair{\opar,\omar}^{\otimes 4h} \otimes \pair{\omar,\omar}  \]
of $C^\infty$ hermitian line bundles on $Y$.
\end{prop}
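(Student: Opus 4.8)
The plan is to obtain the claimed identity purely formally, by substituting the description of $\delta_P^*\bb$ from Proposition~\ref{deltaPupperstar} into the Deligne pairing and expanding by bi-multiplicativity. Write $N=\pair{\opar,\omar}$, viewed as a $C^\infty$ hermitian line bundle on $Y$, so that Proposition~\ref{deltaPupperstar} supplies a canonical isometry $\delta_P^*\bb \isom \opar^{\otimes 2}\otimes\pi^*N\otimes\omar$ of $C^\infty$ hermitian line bundles on $X$. Applying $\pair{-,-}$ in both slots and using that the metrized Deligne pairing is bi-multiplicative — which is immediate from the explicit formula (\ref{defmetricpairing}), or from \cite[Section~6]{de} — I would expand $\pair{\delta_P^*\bb,\delta_P^*\bb}$ as the tensor product of the six contributions $\pair{A,A}$, $\pair{B,B}$, $\pair{C,C}$, $\pair{A,B}^{\otimes 2}$, $\pair{A,C}^{\otimes 2}$, $\pair{B,C}^{\otimes 2}$, where $A=\opar^{\otimes 2}$, $B=\pi^*N$, $C=\omar$, the exponent $2$ on the cross terms coming from the symmetry isometry $\pair{L,M}\isom\pair{M,L}$.

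Next I would simplify each term using the isometries recalled in Section~\ref{sec:metrization}. The projection-formula isometry $\pair{M,\pi^*N}\isom N^{\otimes\deg M}$ from (\ref{delignenorm}), together with $\deg\oo(P)=1$, $\deg\omega=2h-2$, and the fact that $\pi^*N$ has relative degree zero, gives $\pair{A,B}\isom N^{\otimes 2}$, $\pair{B,B}\isom\oo_Y$, and $\pair{B,C}\isom\pair{\omar,\pi^*N}\isom N^{\otimes(2h-2)}$; bi-multiplicativity gives $\pair{A,C}=\pair{\opar,\omar}^{\otimes 2}=N^{\otimes 2}$; and the adjunction isometry (\ref{adj}), $\pair{\opar,\opar}\isom\pair{\opar,\omar}^{\otimes -1}=N^{\otimes -1}$, gives $\pair{A,A}=\pair{\opar,\opar}^{\otimes 4}\isom N^{\otimes -4}$. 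Collecting the exponents of $N=\pair{\opar,\omar}$ yields $-4+0+4+4+(4h-4)=4h$, while $\pair{C,C}=\pair{\omar,\omar}$ contributes exactly once, and one reads off the canonical isometry $\pair{\delta_P^*\bb,\delta_P^*\bb}\isom\pair{\opar,\omar}^{\otimes 4h}\otimes\pair{\omar,\omar}$.

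There is no real obstacle here; the argument is bookkeeping of exponents and the only thing to be careful about is that every isomorphism in sight is a genuine isometry of $C^\infty$ hermitian line bundles. This is fine because we are in the smooth proper setting $\pi\colon X\to Y$ of Section~\ref{sec:metrization}, so that Proposition~\ref{deltaPupperstar}, the bi-multiplicativity of the Deligne metric, the projection-formula and symmetry isometries (\ref{delignenorm}), and the adjunction isometry (\ref{adj}) are all available with their metrics; one only has to stipulate that $\pi^*N$ carries the pullback of the Deligne metric on $N$, so that $\pair{\pi^*N,\pi^*N}$ and $\pair{\omar,\pi^*N}$ are isometric to $\oo_Y$ and $N^{\otimes(2h-2)}$ on the nose. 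If one prefers, the entire computation can be run additively in $\Pic(Y)\otimes\qq$ with metrics, which makes the cancellation $-4+4+4+(4h-4)=4h$ completely transparent.
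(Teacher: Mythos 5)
Your proposal is correct and follows essentially the same route as the paper: the paper's proof likewise expands $\pair{\delta_P^*\bb,\delta_P^*\bb}$ via Proposition~\ref{deltaPupperstar} and bi-multiplicativity, then collapses the terms with the adjunction isometry (\ref{adj}) and the projection-formula isometries $\pair{\opar,\pi^*\pair{\opar,\omar}}\isom\pair{\opar,\omar}$ and $\pair{\omar,\pi^*\pair{\opar,\omar}}\isom\pair{\opar,\omar}^{\otimes 2h-2}$ from (\ref{delignenorm}). Your exponent bookkeeping matches, so nothing further is needed.
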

\begin{proof} Expand the Deligne pairing of the right hand side in Proposition \ref{deltaPupperstar} with itself, using the adjunction isometry $\pair{\opar,\opar} \isom \pair{\opar,\omar}^{\otimes -1} $ and the canonical isometries 
\[ \pair{\opar,\pi^*\pair{\opar,\omar}} \isom \pair{\opar,\omar} \] and 
\[ \pair{\omar,\pi^*\pair{\opar,\omar}} \isom \pair{\opar,\omar}^{\otimes 2h-2} \]
that we obtain from (\ref{delignenorm}).
\end{proof}
\begin{prop} \label{omega_in_terms_of_biext} Assume $X \to Y$ has a section $P \colon Y \to X$. Then we have a canonical isometry
\[ \pair{\opar,\omar}^{\otimes 4h^2} \isom \pair{\delta_P^*\bb, \delta_P^*\bb} \otimes \kappa^* \bb \]
of $C^\infty$ hermitian line bundles on $Y$.
\end{prop}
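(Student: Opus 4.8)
The plan is to combine the two isometries just established, namely Proposition~\ref{deltasq} and Proposition~\ref{kappa}, and observe that they fit together to give exactly the claimed identity. First I would recall from Proposition~\ref{kappa} the canonical isometry
\[ \kappa^* \bb \isom \pair{\opar,\omar}^{\otimes 4h(h-1)} \otimes \pair{\omar,\omar}^{\otimes -1} \, , \]
and from Proposition~\ref{deltasq} the canonical isometry
\[ \pair{\delta_P^*\bb, \delta_P^*\bb} \isom \pair{\opar,\omar}^{\otimes 4h} \otimes \pair{\omar,\omar} \, . \]

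Next I would simply take the tensor product of these two isometries of $C^\infty$ hermitian line bundles on $Y$. On the right hand side the two factors $\pair{\omar,\omar}$ and $\pair{\omar,\omar}^{\otimes -1}$ cancel, and the powers of $\pair{\opar,\omar}$ add up to $4h + 4h(h-1) = 4h^2$. This yields the canonical isometry
\[ \pair{\delta_P^*\bb, \delta_P^*\bb} \otimes \kappa^* \bb \isom \pair{\opar,\omar}^{\otimes 4h^2} \, , \]
which is the assertion of the proposition after reading it in the opposite direction.

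Since the whole argument is just a formal manipulation of previously established canonical isometries, there is really no substantive obstacle here; the only point requiring (minimal) care is bookkeeping of the exponents, i.e.\ checking that $4h^2 - 4h = 4h(h-1)$ and that the $\pair{\omar,\omar}$-contributions indeed appear with opposite signs so as to cancel. Everything is compatible with the bi-multiplicativity of the Deligne pairing and with the fact, recorded in Section~\ref{sec:metrization}, that the relevant canonical isomorphisms of Deligne pairings are isometries when the line bundles carry their $C^\infty$ hermitian metrics, so no new metric computation is needed.
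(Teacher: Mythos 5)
Your proposal is correct and is exactly the paper's argument: the paper proves this proposition by combining Propositions~\ref{kappa} and~\ref{deltasq}, i.e.\ tensoring the two isometries so that the $\pair{\omar,\omar}$ factors cancel and the exponents add to $4h^2$. Nothing further is needed.
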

\begin{proof} This follows immediately upon combining Propositions \ref{kappa} and \ref{deltasq}.
\end{proof}

\section{Calculation of Lear extensions} \label{learI}

Our aim in this section is to explicitly calculate the Lear extensions associated to the various $C^\infty$ hermitian line bundles in Section \ref{sec:keyisom}. 

Let $\pi \colon \Xbar \to \Delta$ be a nodal curve over the unit disc, smooth over $\Delta^*$. Let $G$ be the weighted dual graph of the special fiber $\Xbar_0$ (see Section~\ref{prelimsemistable}). Let $P(\Xbar)$ be the (additively written) group of line bundles on $\Xbar$. Then we have a canonical specialization map $R \colon P(\Xbar) \to \rr^{V(G)}$ given by $R(L)(x)=(L\cdot x)_0=\deg (L|_x)$ for all $L \in P(\Xbar)$ and $x \in V(G)$. We put $K=R(\omega)$ in $\rr^{V(G)}$ where $\omega$ is the relative dualizing sheaf of $\pi$.  Let $\bar{g}$ denote the Green's function on $G$ determined by the discrete Laplacian as in Section \ref{prelimgraphs}.

Let $E$ be a Cartier divisor on $\Xbar$ with support in $\Sm(\pi)$, and with relative degree zero over $\Delta$. Equip $\oo(E)$ over $X=\pi^{-1}\Delta^*$ with the metric derived from the Arakelov-Green's function $g_\Ar$, notation $\oo(E)_\Ar$.  
\begin{prop} \label{learfromhdj} Write $\ee=R(\oo(E))$. Then the Lear extension $\overline{ \pair{ \oo(E), \oo(E) } }$ of the restriction of $\pair{\oo(E),\oo(E)}$ to $\Delta^*$ exists. We have an equality
\[ \overline{ \pair{ \oo(E), \oo(E) } } = \pair{\oo(E),\oo(E)} + \bar{g}(\ee,\ee)[0] \]
of $\qq$-line bundles over $\Delta$.
\end{prop}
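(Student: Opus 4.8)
The plan is to reduce to the case where $E$ is a section (or a difference of two sections) and then invoke Theorem~\ref{generalexistence}, using Proposition~\ref{learchar} to read off the multiplicity. First, I would note that by bi-multiplicativity of the Deligne pairing it suffices to treat the case $E = P - Q$ for two disjoint sections $P, Q$ through $\Sm(\pi)$: any relative degree zero Cartier divisor supported in the smooth locus is a $\zz$-linear combination of such differences (possibly after pulling back along a ramified base change $\Delta \to \Delta$, which only rescales everything by a known integer and does not affect the $\qq$-line bundle statement, as the associated metrized graph is merely subdivided by Proposition~\ref{minimaldesing}). Expanding $\pair{\oo(E),\oo(E)}$ into the four pairings $\pair{\opar,\opar}$, $\pair{\oqar,\oqar}$, $\pair{\opar,\oqar}$ and using the adjunction isometry~(\ref{adj}), the whole question becomes the existence and computation of the Lear extensions of $\pair{\opar,\oqar}$ and $\pair{\opar,\omar}$ — but I would instead work directly with the pullback Poincar\'e bundle, since that is where Lear's theorem applies.

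The key step is the canonical isometry~(\ref{delignepoinc}): for the section $\nu \colon \Delta^* \to J$ attached to $E$ one has $\nu^*\bb \isom \pair{\oo(E)_\Ar,\oo(E)_\Ar}^{\otimes -1}$. Since $\pi$ is a nodal curve over the disc smooth over $\Delta^*$ and $E$ is supported in $\Sm(\pi)$, the section $\nu$ is a normal function section of the family of jacobians $J \to \Delta^*$ (end of Section~\ref{sec:metrization}), so Theorem~\ref{generalexistence} (Lear) gives that $\nu^*\bb$ — hence $\pair{\oo(E),\oo(E)}$ — has a Lear extension over $\Delta$. It remains to identify this extension as a $\qq$-line bundle. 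By Proposition~\ref{learchar} with $n=1$, writing $t$ for the coordinate and picking a generating section $s$ of the tautological extension $\pair{\oo(E),\oo(E)}$ of the Deligne pairing over $\Delta$, I need to compute the rational number $\mu$ such that $\log\|s\|_{\Ar} - \mu\log|t|$ extends continuously over $0$; then the Lear extension equals $\pair{\oo(E),\oo(E)} + \mu[0]$, and the claim is $\mu = \bar g(\ee,\ee)$.

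To pin down $\mu$ I would compute the asymptotics of the metric on $\nu^*\bb$ near $t=0$ using the formula of Proposition~\ref{normbiext}: $\log\|s\|(\nu(t)) = \log|f|(t) - 2\pi\,(\Im z)^t(\Im\Omega(t))^{-1}(\Im z)$, where $z$ is a lift of the Abel-Jacobi point $\delta(t) = \int_{Q(t)}^{P(t)}$. The holomorphic term $\log|f|$ contributes nothing to the singular part, so the $\log|t|$-coefficient comes entirely from the quadratic form $2\pi\,(\Im z)^t(\Im\Omega)^{-1}(\Im z)$. Here I would feed in the nilpotent-orbit asymptotics from Section~\ref{asympresults}: by Proposition~\ref{asymptperiod}(a), $\Omega(e(w)) = \sum A_j w_j + \psi(e(w))$ (one variable, so $w = \frac{1}{2\pi i}\log t$), and by Proposition~\ref{asymptAJ}(a), the lifted Abel-Jacobi vector has the form $\delta(e(w)) = A b\, w + \alpha(e(w))$ with $Ab \in \zz^h$. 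Substituting, $\Im z \sim \tfrac{1}{2\pi}(\log|t|^{-1})\,Ab$ and $\Im\Omega \sim \tfrac{1}{2\pi}(\log|t|^{-1})\,A$, so the quadratic form behaves like $\tfrac{1}{2\pi}(\log|t|^{-1})\,b^t A b$ up to a bounded error, giving $\mu = -b^t A b$ — with a sign bookkeeping step from the $\otimes(-1)$ in~(\ref{delignepoinc}). The final move is combinatorial: one identifies $b^t A b$ with the value $\bar g(\ee,\ee)$ of the discrete Green's function on $G$ at the specialization divisor $\ee = R(\oo(E))$. This follows because $A$ is (up to the relevant identifications) the intersection/monodromy matrix of the vanishing cycles, whose inverse-type pairing on the lattice $\rmH^1(G)$ is exactly the effective-resistance pairing, and $b$ encodes the component-degrees $\ee$; concretely this is the content of~(\ref{greenandresistance}) together with the description of $A_j$ in Section~\ref{asympresults} (all nonzero entries in the $\rmH^1(G)$-block), and the reference \cite{hdj} already packages this identification.

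The main obstacle is the last step: matching the Hodge-theoretic quantity $b^t A b$ coming out of the nilpotent orbit theorem with the graph-theoretic quantity $\bar g(\ee,\ee)$. This requires carefully tracking how the chosen symplectic basis (with $\rmH^1(G) = \spann(e_1,\dots,e_r)$) interacts with the dual-graph combinatorics, i.e.\ that the vector $b_j$ records, via $A_j b_j \in \zz^h$, the degrees of $\oo(E)$ on the components meeting the $j$-th node, and that the quadratic form $A$ restricted to this data reproduces the Laplacian pseudo-inverse. I expect this to follow cleanly from results already cited in the paper (\cite{hdj}, and the Picard--Lefschetz description in Section~\ref{asympresults}), so in the write-up I would isolate it as a lemma and otherwise lean on Proposition~\ref{learchar} and Theorem~\ref{generalexistence} to keep the argument short.
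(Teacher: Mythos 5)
Your existence argument is the same as the paper's: both obtain it from Theorem \ref{generalexistence} combined with the isometry (\ref{delignepoinc}). For the formula, however, the paper takes a much shorter route: it first reduces to the case of smooth total space by passing to the minimal desingularization (Proposition \ref{minimaldesing} guarantees that neither side of the asserted identity changes, since $E$ is supported in $\Sm(\pi)$), and then simply quotes \cite[Theorem~2.2]{hdj} and \cite[Corollary~7.5]{hdj}, which already identify the continuous (Lear) extension with the Deligne pairing twisted by $\bar g(\ee,\ee)[0]$. Your route instead redoes the asymptotic computation via the nilpotent-orbit expansions of Propositions \ref{asymptperiod} and \ref{asymptAJ} and the norm formula of Proposition \ref{normbiext}; this is viable --- it is essentially how the paper itself handles the harder Proposition \ref{maintechnical} --- but note what it actually costs. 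First, $A$ has rank $r\le h$, so the step ``$\Im\Omega\sim\frac{-\log|t|}{2\pi}A$, hence the quadratic form is $\sim -\log|t|\,b^tAb$'' needs the Schur-complement analysis as in (\ref{inverse})--(\ref{Qmatrix}), not a naive inversion. Second, Proposition \ref{learchar} requires the asymptotics of a generating section of the \emph{specific} extension $\pair{\oo(E),\oo(E)}$ over $\Delta$, so besides the fractional coefficient you must also match the integer part (the $\log|f|$ term and the vanishing order of your chosen section) against the Deligne pairing on $\Xbar$; this is the intersection-theoretic bookkeeping carried out in the second half of the proof of Theorem \ref{thm:learextensiondeligne} and is not automatic from the Poincar\'e-bundle formula alone. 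Third, the identification $b_1^tA'b_1=\bar g(\ee,\ee)$, which you correctly flag as the main obstacle and defer to \cite{hdj}, is precisely the content of the results the paper cites, so your computation largely re-derives what the paper simply quotes. Finally, your opening reduction to differences of sections via ramified base change is unnecessary for your own argument (and, as stated, shakier than the paper's desingularization step); I would drop it.
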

\begin{proof} The existence of $\overline{ \pair{ \oo(E), \oo(E) } }$ follows from Theorem \ref{generalexistence} above in combination with (\ref{delignepoinc}). As to the formula for $\overline{ \pair{ \oo(E), \oo(E) } }$, let $\tilde{\Xbar} \to \Xbar$ denote the minimal desingularization of $\Xbar$. Let $\tilde{\Gamma}$ resp.\ $\Gamma$ denote the metrized graphs associated to the dual graphs of the special fibers of $\tilde{\Xbar}$ resp.\ $\Xbar$. By Proposition \ref{minimaldesing}, the minimal desingularization $\tilde{\Xbar} \to \Xbar$ of $\Xbar$ induces a canonical isometry $\tilde{\Gamma} \isom \Gamma$. We see that, since $E$ has support in the smooth locus of $\pi$, upon passing to the minimal desingularization $\tilde{\Xbar}$ neither the left hand side nor the right hand side of the equality to be proven changes. Hence we may assume that $\Xbar$ is smooth. Then the formula follows upon combining \cite[Theorem~2.2]{hdj} and \cite[Corollary~7.5]{hdj}.  
\end{proof} 
Now let $\Ybar$ be a smooth complex algebraic variety, and let $D$ be a normal crossings divisor on $\Ybar$. Let $\pi \colon \Xbar \to \Ybar$ be a nodal curve of genus $h>0$, assumed to be smooth over $Y = \Ybar \setminus D$, and put $X=\pi^{-1}Y$. Using Proposition \ref{learfromhdj} we will calculate several Lear extensions associated to sections of the family of jacobians $j \colon J \to Y$ associated to $X \to Y$ explicitly. Let $\bb$ denote the Poincar\'e bundle on $J$. The Lear extensions $\overline{\kappa^* \bb} $ and $\overline{\delta^*\bb}$ considered in Propositions \ref{kappalear} and \ref{Leardelta} below were calculated on the moduli stack of pointed stable curves by Hain in \cite[Theorems 10.2 and 11.5]{hain_normal}. Our results reproduce Hain's by a test curve argument. 

Assume that $\pi$ has a section $P$. Let $\omega$ denote the relative dualizing sheaf of $\pi$. As before we put $ \kappa_1=\pair{\omega,\omega}$ and $ \psi = \pair{\oo(P),\omega}=P^*\omega$. From Proposition~\ref{kappa} we recall the map $\kappa \colon Y \to J$ associated to the relative divisor $(2h-2)P-\omega$ on $X$ over~$Y$. 
\begin{prop} \label{kappalear} 
The Lear extension of $\kappa^*\bb$ over $\Ybar$ exists. If $\pi \colon \Xbar \to \Delta$ is a nodal curve over the unit disc, let $G$ be the weighted dual graph of the special fiber $\Xbar_0$. Assume that the section $P \colon \Delta \to \Xbar$ passes through $\Sm(\pi)$. Then one has an equality
\[ \overline{\kappa^*\bb} = 4h(h-1)\psi - \kappa_1 - \bar{g}((2h-2)x-K,(2h-2)x-K)[0] \]
of $\qq$-line bundles over $\Delta$. Here $K$ is the divisor on $G$ induced by $\omega$, and $x \in \V(G)$ is the irreducible component of $G$ where $P$ specializes.
\end{prop}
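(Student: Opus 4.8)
The plan is to combine the key isometry from Proposition \ref{kappa} with the Lear-extension computation of Proposition \ref{learfromhdj}. First, recall from Proposition \ref{kappa} the canonical isometry
\[ \kappa^* \bb \isom \pair{\opar,\omar}^{\otimes 4h(h-1)} \otimes \pair{\omar,\omar}^{\otimes -1} \]
of $C^\infty$ hermitian line bundles on $Y$. By Theorem \ref{generalexistence}, applied to the normal function section $\kappa$ of $J \to Y$ (cf. the end of Section~\ref{sec:metrization}), the Lear extension $\overline{\kappa^*\bb}$ over $\Ybar$ exists; this settles the existence statement in full generality. For the explicit formula it suffices, since Lear extendability can be checked on coordinate charts adapted to $D$ and the formula is local at the boundary, to treat the case where the base is the unit disc $\Delta$, which is what the proposition states.

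So assume $\pi \colon \Xbar \to \Delta$ with $P$ passing through $\Sm(\pi)$. The idea is to apply Proposition \ref{learfromhdj} to a divisor whose associated Deligne pairing recovers $\kappa^*\bb$. The natural choice is $E = (2h-2)P - \omega$, which has relative degree zero; but this divisor need not be supported in $\Sm(\pi)$ since $\omega$ is a line bundle on all of $\Xbar$, not a combination of sections. I would handle this as follows. Using the canonical adjunction isometry $\pair{\opar,\opar} \isom \pair{\opar,\omar}^{\otimes -1}$ from (\ref{adj}), rewrite the target of the Proposition \ref{kappa} isometry entirely in terms of $\pair{\opar,\opar}$ and $\pair{\omar,\omar}$. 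The line bundle $\pair{\oo(P),\oo(P)}$ is covered directly by Proposition \ref{learfromhdj} with $E = P$ (supported in $\Sm(\pi)$), giving
\[ \overline{\pair{\oo(P),\oo(P)}} = \pair{\oo(P),\oo(P)} + \bar{g}(x,x)[0] \, , \]
where $x \in V(G)$ is the specialization of $P$. For $\pair{\omar,\omar}$ one instead invokes Theorem \ref{main_second}(a) — wait, that is proved later; instead I would argue directly: the metric defect of $\pair{\omar,\omar}$ at the origin is governed by the fact that $\omar$, while $C^\infty$ on $X$, is the Arakelov metric, and one computes its Lear extension via the same biextension machinery. Concretely, Proposition \ref{kappa} expresses $\kappa^*\bb$ in a way that only involves $\pair{\opar,\omar}$ and $\pair{\omar,\omar}$; applying instead the isometry of Proposition \ref{learfromhdj} to the full collection of relative-degree-zero divisors built from $P$ together with the fact (\ref{delignepoinc}) that $\kappa^*\bb \isom \pair{\oo((2h-2)P-\omega)_\Ar,\oo((2h-2)P-\omega)_\Ar}^{\otimes -1}$, one reduces to computing the single symbol $\bar g(\ee,\ee)$ for $\ee = R(\oo((2h-2)P-\omega)) = (2h-2)\delta_x - K$, where $K = R(\omega)$.

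The cleanest route is therefore: pass to the minimal desingularization $\tilde\Xbar \to \Xbar$ as in Proposition \ref{learfromhdj}, where neither side changes (the metrized graph is preserved by Proposition \ref{minimaldesing} and $P$ stays in the smooth locus), so we may assume $\Xbar$ smooth; then $\oo((2h-2)P - \omega)$ is an honest line bundle of relative degree zero, and although $\omega$ is not literally supported on sections, the proof of Proposition \ref{learfromhdj} via \cite[Theorem~2.2]{hdj} and \cite[Corollary~7.5]{hdj} only uses the relative degree and the specialization class $\ee = R(\oo(E)) \in \rr^{V(G)}$, so it applies verbatim. This yields
\[ \overline{\kappa^*\bb} = \pair{\oo((2h-2)P-\omega),\oo((2h-2)P-\omega)}^{\otimes -1} - \bar g\bigl((2h-2)x - K,(2h-2)x-K\bigr)[0] \, , \]
and expanding the Deligne pairing on the right using bilinearity together with $\pair{\oo(P),\omega} = \psi$ and $\pair{\oo(P),\oo(P)}^{\otimes -1} = \psi$ (adjunction (\ref{canonicaladj})) and $\pair{\omega,\omega} = \kappa_1$ gives $4h(h-1)\psi - \kappa_1$ for the line-bundle part, which is the asserted formula.

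The main obstacle I anticipate is the bookkeeping around the sign and the fact that $\omega$ is not a divisor supported on sections: one must be careful that Proposition \ref{learfromhdj} (equivalently \cite[Theorem~2.2, Corollary~7.5]{hdj}) really does apply to $E = (2h-2)P - \omega$ after desingularization — i.e. that the hypothesis ``support in $\Sm(\pi)$'' in the cited results can be relaxed to ``relative degree zero'' once $\Xbar$ is regular, because the metric on $\oo(E)_\Ar$ and its Lear extension depend only on the Arakelov-Green's function and the specialization class $\ee = R(\oo(E))$. Verifying this reduction carefully, and tracking the desingularization step so that $K$ and $x$ on $G$ are the correct combinatorial data, is the technical heart of the argument; everything else is a routine expansion of Deligne pairings using bilinearity and the adjunction isometries already established in Section~\ref{sec:metrization}.
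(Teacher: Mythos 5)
Your proposal is correct and follows essentially the same route as the paper: existence via Theorem \ref{generalexistence} applied to the normal function section $\kappa$, and the formula over $\Delta$ via the isometry $\kappa^*\bb \isom \pair{\oo(E)_\Ar,\oo(E)_\Ar}^{\otimes -1}$ from (\ref{delignepoinc}) with $E=(2h-2)P-\omega$, followed by Proposition \ref{learfromhdj} and expansion of $-\pair{\oo(E),\oo(E)}$ via the adjunction isomorphism to get $4h(h-1)\psi-\kappa_1$. Your extra care about the ``support in $\Sm(\pi)$'' hypothesis (which the paper applies tacitly, since one may represent $\omega$ by a divisor avoiding the nodes, or note that only the specialization class $(2h-2)x-K$ matters) is legitimate and handled adequately; just drop the aborted detour through $\pair{\oo(P),\oo(P)}$ and Theorem \ref{main_second}(a), which you rightly retract.
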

\begin{proof} The existence of the Lear extension of $\kappa^*\bb$ over $\Ybar$ follows from Theorem \ref{generalexistence} since $\kappa$ is a normal function section of $J \to Y$. Let $E=(2h-2)P-\omega$.
 By equation (\ref{delignepoinc}) we have a canonical isometry
\[ \kappa^*\bb \isom \pair{\oo(E)_\Ar,\oo(E)_\Ar}^{\otimes -1}  \]
of $C^\infty$ hermitian line bundles over $Y$. The formula is then obtained by applying Proposition \ref{learfromhdj}, noting that the equality 
\[ -\pair{\oo(E),\oo(E)}=-\pair{(2h-2)\oo(P)-\omega,(2h-2)\oo(P)-\omega} = 4h(h-1)\psi - \kappa_1  \]
holds over $\Delta$ by the adjunction isomorphism $ \pair{\oo(P),\oo(P)} \isom \pair{\oo(P),\omega}^{\otimes -1}$.
\end{proof}
Next assume that $\pi \colon \Xbar \to \Ybar$ has two sections $P$, $Q$. We recall from Proposition \ref{diagonal_metric} the section $\delta$ of $J \to Y$ associated to the divisor $P-Q$. 
\begin{prop} \label{Leardelta} 
The Lear extension of $\delta^*\bb$ over $\Ybar$ exists. If $\pi \colon \Xbar \to \Delta$ is a nodal curve over the unit disc, let $G$ be the weighted dual graph of the special fiber $\Xbar_0$, and assume that the sections $P, Q \colon \Delta \to \Xbar$ both pass through $\Sm(\pi)$. Assume that $P$ specializes onto $x \in V(G)$, and $Q$ specializes onto $y \in V(G)$. Let $r$ denote the effective resistance on the weighted dual graph $G$ of $\Xbar_0$. Then one has the equalities
\[ \begin{split} \overline{\delta^*\bb} & = 
-\pair{\oo(P-Q),\oo(P-Q)}- \bar{g}(x-y,x-y)[0]
\\
& =2 \, \pair{\oo(P),\oo(Q)} + \pair{\oo(P),\omega} + \pair{\oo(Q),\omega} - \bar{g}(x-y,x-y)[0] \\ 
& = 2 \, \pair{\oo(P),\oo(Q)} + \pair{\oo(P),\omega} + \pair{\oo(Q),\omega} - r(x,y)[0]  \end{split} \]
of $\qq$-line bundles on $\Delta$.
\end{prop}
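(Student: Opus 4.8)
The plan is to derive all three equalities from Proposition~\ref{learfromhdj}, following verbatim the strategy of the proof of Proposition~\ref{kappalear}, the only change being that here the relevant relative degree zero divisor is $E=P-Q$ rather than $(2h-2)P-\omega$. The existence of the Lear extension of $\delta^*\bb$ over the general base $\Ybar$ is immediate from Theorem~\ref{generalexistence}, since $\delta$ is a normal function section of $j\colon J\to Y$: it is the section attached to the relative degree zero divisor $P-Q$ on $X$, cf.\ the end of Section~\ref{sec:metrization}. From this point on we may work over $\Ybar=\Delta$.

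For the first equality I would start from the canonical isometry $\delta^*\bb\isom\pair{\oo(P-Q)_\Ar,\oo(P-Q)_\Ar}^{\otimes -1}$ provided by (\ref{delignepoinc}), and apply Proposition~\ref{learfromhdj} with $E=P-Q$. One small verification is needed here: the specialization divisor $\ee=R(\oo(P-Q))\in\rr^{V(G)}$ equals $x-y$. Indeed, since $P$ and $Q$ pass through $\Sm(\pi)$ and specialize respectively to $x$ and $y$, the line bundle $\oo(P)$ has degree $1$ on the component $x$ and $0$ on every other component, and likewise for $\oo(Q)$, so $R(\oo(P-Q))=x-y$. Using that forming a Lear extension commutes with taking inverses (the metric on $L^{\otimes -1}$ extends continuously over $\Ybar\setminus D^{\sing}$ precisely when that on $L$ does, and the resulting extension is the inverse), Proposition~\ref{learfromhdj} then yields $\overline{\delta^*\bb}=-\pair{\oo(P-Q),\oo(P-Q)}-\bar g(x-y,x-y)[0]$.

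The remaining two equalities are purely formal. Expanding $\pair{\oo(P-Q),\oo(P-Q)}$ by bi-multiplicativity of the Deligne pairing and the symmetry relation, and then applying the adjunction isomorphism $\pair{\oo(P),\oo(P)}\isom\pair{\oo(P),\omega}^{\otimes -1}$ of (\ref{canonicaladj}) (and its analogue for $Q$), one gets $-\pair{\oo(P-Q),\oo(P-Q)}=2\,\pair{\oo(P),\oo(Q)}+\pair{\oo(P),\omega}+\pair{\oo(Q),\omega}$, which is the second equality. For the third, I simply invoke (\ref{greenandresistance}), which identifies $\bar g(x-y,x-y)$ with the effective resistance $r(x,y)$ on the metrized graph associated to $G$.

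I do not anticipate a genuine obstacle: all the substance is packaged in Proposition~\ref{learfromhdj} (which itself rests on \cite{hdj} and Proposition~\ref{minimaldesing}). The only points demanding a moment's care are the sign bookkeeping when passing to inverse line bundles and the identification of the specialization of $\oo(P-Q)$ with the combinatorial divisor $x-y$ on the dual graph; both are routine. As a sanity check one could alternatively match the formula over $\Delta$ against Hain's computation \cite[Theorem~11.5]{hain_normal} on $\overline{\mm}_{h,2}$ by a test-curve argument, but the direct route via Proposition~\ref{learfromhdj} is the shortest.
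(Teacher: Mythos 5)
Your proposal is correct and follows essentially the same route as the paper: existence via Theorem~\ref{generalexistence} for the normal function section attached to $P-Q$, the first equality by applying Proposition~\ref{learfromhdj} with $E=P-Q$ through the isometry (\ref{delignepoinc}), the second by adjunction, and the third by (\ref{greenandresistance}). The extra details you supply (that $R(\oo(P-Q))=x-y$ and that Lear extensions are compatible with inverses) are exactly the routine verifications the paper leaves implicit.
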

\begin{proof} The existence of the Lear extension $\overline{\delta^*\bb}$ follows from Theorem \ref{generalexistence} since $\delta$ is a  normal function section of $J \to Y$. To obtain the first equality, we apply Proposition \ref{learfromhdj} to $\pi \colon \Xbar \to \Delta$ with $E = P-Q$. We obtain the second equality by expanding $\pair{\oo(P-Q),\oo(P-Q)}$ using the adjunction isomorphism $\pair{\oo(P),\oo(P)} \isom \pair{\oo(P),\omega}^{\otimes -1}$. The third equality follows from (\ref{greenandresistance}).
\end{proof}
Recall the $Y$-morphism $\delta_P \colon X \to J$ given by sending the point $z \in X_y$ to the Abel-Jacobi element $\int_z^P$ in $J_y$ for all $y \in Y$. Our next aim is to study the Lear extension of $\delta_P^*\bb$. From now on we assume therefore that $\Xbar$ is smooth. It is then automatic that the image of any section $P \colon \Ybar \to \Xbar$ is contained in the smooth locus $\Sm(\pi)$ of $\pi \colon \Xbar \to \Ybar$. Also we note that $\Xbar \setminus X$ is a normal crossings divisor on $\Xbar$.
\begin{prop} \label{deltalear} Assume that $\Xbar$ is smooth, and let $P \colon \Ybar \to \Xbar$ be a section of $\pi$. Then the Lear extension of $\delta_P^*\bb$ over $\Xbar$ exists. Let $\pi \colon \Xbar \to \Delta$ be a nodal curve over the unit disc, with $\Xbar$ smooth, and with $\pi$ smooth over $\Delta^*$. Let $G$ be the weighted dual graph of $\Xbar_0$ and let $r$ denote effective resistance on $G$. Then one has an equality
\begin{equation} \label{LearonXbar} \overline{\delta_P^*\bb}  = 2\,\oo(P) + \pi^*\pair{\oo(P),\omega} + \omega - \sum_{y \in \V(G)} r(x,y) \, y  
\end{equation}
of $\qq$-line bundles on $\Xbar$. Here $\omega$ is the relative dualizing sheaf of $\pi$, and $x \in \V(G)$ is the irreducible component of $\Xbar_0$ where $P$ specializes.
\end{prop}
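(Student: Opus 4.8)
The plan is to obtain existence from Lear's theorem and to identify the right-hand side of \eqref{LearonXbar} by a reduction to Proposition~\ref{Leardelta}, restricting along general transverse discs; this keeps us clear of any circularity with Theorems~\ref{main_first} and~\ref{main_second}, which are proved only afterwards. For existence, note that since $\Xbar$ is smooth the divisor $\Xbar\setminus X$ has normal crossings on $\Xbar$, and that $\delta_P\colon X\to J$ defines, via $z\mapsto(z,\delta_P(z))$, a section of the pulled-back jacobian fibration $X\times_Y J\to X$ attached to a polarized variation of Hodge structure of weight $-1$ over $X$. This section is an admissible normal function section in the sense of \cite[Definition~5.2]{hain_normal}: admissibility near the boundary is a consequence of the asymptotic analysis of Section~\ref{asympresults} together with the general theory of \cite{pearldiff} and \cite{hain_normal}, and it is here that the hypothesis that $\Xbar$ be smooth is used. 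Theorem~\ref{generalexistence} now yields the Lear extension $\overline{\delta_P^*\bb}$ over $\Xbar$. (Alternatively, existence together with the formula below may be extracted from \cite{hdj} as in the proof of Proposition~\ref{learfromhdj}.)

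Now let $\pi\colon\Xbar\to\Delta$ be as in the statement, write $M$ for the right-hand side of \eqref{LearonXbar}, and recall that $\overline{\delta_P^*\bb}$ and $M$ agree over $X$ by Proposition~\ref{deltaPupperstar}; hence $\overline{\delta_P^*\bb}=M+\sum_{y\in\V(G)}c_y\,y$ for some $c_y\in\qq$. By Proposition~\ref{learchar} a Lear extension over a surface is detected near the smooth points of the boundary, so it suffices to show that $c_{y_0}=0$ for each component $y_0$ of $\Xbar_0$. Fix such a $y_0$ and a smooth point $z_0$ of $\Xbar_0$ lying on $y_0$ with $z_0\ne P(0)$, and let $\Delta_1\hookrightarrow\Xbar$ be a small disc through $z_0$, transverse to $\Xbar_0$ and meeting it only in $z_0$. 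After shrinking $\Delta$ we may assume that $\pi$ restricts to an isomorphism $\Delta_1\isom\Delta$; then $\Delta_1$ is the image of a section $s$ of $\pi$ that passes through $\Sm(\pi)$ and specializes to the vertex $y_0\in\V(G)$, and since $\Xbar$ is smooth the situation is again that of a nodal curve over the disc with smooth total space, special fibre $\Xbar_0$, and weighted dual graph $G$.

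Pulling back along $s$ the defining property of $\delta_P$, we have $\delta_P\circ s=\delta_{P,s}\colon\Delta^*\to J$, the section attached to $P-s$, so that $s^*(\delta_P^*\bb)=\delta_{P,s}^*\bb$ with its Poincar\'e metric, by base-change compatibility of the Poincar\'e bundle. Since $s(0)=z_0\notin\Xbar_0^{\sing}$, the pullback $s^*\overline{\delta_P^*\bb}$ is a continuous extension of $\delta_{P,s}^*\bb$ over $\Delta$, hence is its Lear extension, which by Proposition~\ref{Leardelta} equals $2\pair{\oo(P),\oo(s)}+\pair{\oo(P),\omega}+\pair{\oo(s),\omega}-r(x,y_0)[0]$. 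On the other hand $s^*M=2\,s^*\oo(P)+(\pi s)^*\pair{\oo(P),\omega}+s^*\omega-\sum_{y}r(x,y)\,s^*y$, and using the canonical isomorphisms $s^*\oo(P)\cong\pair{\oo(s),\oo(P)}$ and $s^*\omega\cong\pair{\oo(s),\omega}$ from \eqref{canisoms}, the identity $\pi s=\mathrm{id}_\Delta$, and $s^*y=[0]$ if $y=y_0$ and $s^*y=0$ otherwise, we find $s^*M=2\pair{\oo(P),\oo(s)}+\pair{\oo(P),\omega}+\pair{\oo(s),\omega}-r(x,y_0)[0]$ as well, compatibly with the identifications over $\Delta^*$. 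Thus $s^*\overline{\delta_P^*\bb}$ and $s^*M$ coincide as extensions of $\delta_{P,s}^*\bb$; comparing with $s^*\overline{\delta_P^*\bb}=s^*M+c_{y_0}[0]$ yields $c_{y_0}=0$, and since $y_0$ was arbitrary, $\overline{\delta_P^*\bb}=M$.

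The main obstacle is twofold. First, the existence assertion genuinely requires $\Xbar$ smooth and rests on the admissibility at the boundary of the normal function $\delta_P$, i.e. on the several-variable asymptotics of the Abel-Jacobi map from Section~\ref{asympresults} (equivalently, on the corresponding orbit theorems). Second, one must check that pulling back the isometry of Proposition~\ref{deltaPupperstar} along $s$ is compatible with the presentation of $\delta_{P,s}^*\bb$ via \eqref{delignepoinc} and adjunction that is used in the proof of Proposition~\ref{Leardelta}; this is exactly what makes the two computations of $s^*\overline{\delta_P^*\bb}$ above agree as \emph{extensions} of $\delta_{P,s}^*\bb$, and not merely as $\qq$-line bundles on the disc. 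It reduces to the base-change compatibilities of the Deligne pairing and of \eqref{delignepoinc}, but it is the heart of the bookkeeping.
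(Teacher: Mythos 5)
Your proposal is correct and takes essentially the same route as the paper: existence via Lear's theorem applied to the difference normal function of $P$ and the moving point (the paper phrases this through $\Zbar=\Xbar\times_{\Ybar}\Xbar\to\Xbar$ and the existence part of Proposition~\ref{Leardelta}), and the boundary coefficients pinned down by a test-curve argument through general smooth points of $\Xbar_0$ using Proposition~\ref{Leardelta} and the canonical isomorphisms of (\ref{canisoms}). Your choice of test curves as sections $s$ of $\pi$, with Proposition~\ref{deltaPupperstar} setting up the comparison, is only a repackaging of the paper's argument over $\Sm(\pi)$, and the compatibility issue you flag at the end is handled exactly as you suggest, since all identifications involved are isometries and Lear extensions are characterized metrically.
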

\begin{proof}   Let $\Zbar = \Xbar \times_{\Ybar} \Xbar$. The first projection $\pi_1 \colon \Zbar \to \Xbar$ comes equipped with two natural sections, one called $\tilde{P}$ induced by $P$, and the diagonal section $Q$.  Let $\tilde{\bb}$ on $X \times_Y J$ denote the pullback of $\bb$  along the second projection $X \times_Y J \to J$. Let $\delta \colon X \to X \times_Y J$ be the section of the projection $X \times_Y J \to X$ obtained by taking the difference of the two sections $\tilde{P}$ and $Q$. Then we have a canonical isometry $\delta_P^*\bb \isom \delta^*\tilde{\bb}$.
The existence of the Lear extension $\overline{\delta_P^*\bb}$ follows then from Proposition \ref{Leardelta}. Consider the case that $\Ybar=\Delta$. Note that $\tilde{P}$ and the restriction of $Q$ to $\Sm(\pi)$ pass through the smooth locus of $\pi_1$.  Formula (\ref{LearonXbar}) then follows from the formula in Proposition \ref{Leardelta} using a test curve argument over $\Sm(\pi) \subset \Xbar$. There are two things to note. First, let $\pi_2 \colon \Zbar \to \Xbar$ be the second projection, and let $\tilde{\omega}=\pi_2^*\omega$ denote the relative dualizing sheaf of $\pi_1$. Then over $\Xbar$ we have canonical isomorphisms of line bundles
\[ \pair{ \oo(\tilde{P}),\oo(Q) } \isom \oo(P) \, , \quad
\pair{\oo(\tilde{P}),\tilde{\omega}} \isom \pi^*\pair{ \oo(P), \omega }      \, , \quad 
\pair{\oo(Q),\tilde{\omega}} \isom \omega \, , \]
where the Deligne pairings on the left hand sides are taken along the nodal curve $\pi_1 \colon \Zbar \to \Xbar$. Second, note that labelled dual graphs are functorial under the base change from $\Xbar \to \Ybar$ to $\Zbar \to \Xbar$, and next under the base change along any holomorphic map $f \colon \Delta \to \Xbar$. So if we let $f \colon \Delta \to \Xbar$ be a holomorphic map with $f(0)$ a smooth point of an irreducible component of $\Xbar_0$ then the labelled dual graph at the origin of the nodal curve $f^*\Zbar \to \Delta$ is canonically isomorphic to $G$.
\end{proof}

\section{Lear extension of a Deligne pairing} \label{delignelearII}

In this section we prove a technical result (Theorem \ref{thm:learextensiondeligne}) that allows one, under quite general conditions, to prove the existence of, and  to calculate precisely, the Lear extension of a Deligne pairing of two $C^\infty$ hermitian line bundles. We will see that, in general, the statement ``the Lear extension of the Deligne pairing is equal to the Deligne pairing of the Lear extensions'' does not hold true. This non-functoriality is closely related to the ``height jumping'' phenomenon near points of codimension two as discussed in \cite[Section~14]{hain_normal}. Our result gives a  quantitative approach to the height jump. 

Let $D$ be the divisor of $\Delta^n$ given by the equation $t_1=0$.
In this section we consider nodal curves $\pi \colon \Xbar \to \Delta^n$, such that $\Xbar$ itself is smooth, and such that $\pi$ is smooth over $Y=\Delta^n \setminus D$. We write $X = \pi^{-1}Y$ as usual. Let $\Sigma$ be the singular locus of $\pi$.  
We have that $\Xbar \setminus X =\pi^{-1}D$ is a normal crossings divisor of $\Xbar$, and $(\pi^{-1}D)^{\mathrm{sing}}=\Sigma$. The map $\Sigma \to D$ induced by $\pi$ is finite and unramified. Put $\Sigma_0=\Sigma \cap \Xbar_0$; this is a finite set.

Let $e \in \Xbar_0$ be a point. If $e \in \Sigma_0$, then locally around $e$ we can choose coordinates $(u,v,t_2,\ldots,t_n)$ such that the projection $\pi$ is given by $(u,v,t_2,\ldots,t_n)\mapsto (uv,t_2,\ldots,t_n)$. On the other hand if $e \notin \Sigma_0$, then locally around $e$ we can choose coordinates $(z,t_1,\ldots,t_n)$ such that the projection $\pi$ is given by $(z,t_1,\ldots,t_n) \mapsto (t_1,\ldots,t_n)$.
For each $\eps \in \rr_{>0}$ and for each $e \in \Xbar_0$ we let $U_{e,\eps}$ denote the following open neighborhood of $e$ in $\Xbar$. If $e \in \Sigma_0$ we let $U_{e,\eps}$ be given by the conditions $|u|, |v| < \eps^{1/2}$, $|t_i|<\eps$ for $i=2,\ldots,n$. If $e \notin \Sigma_0$ we let $U_{e,\eps}$ be given by the conditions $|z|<\eps$, $|t_i|<\eps$ for $i=1,\ldots,n$. 

For $\eps \in \rr_{>0}$ small enough there exists a finite set $\mathcal{U}$ of mutually disjoint open neighborhoods $U_{e,\eps}$ with centers $e \in \Xbar_0$ such that (i) $\mathcal{U}$ contains all $U_{e,\eps}$ with $e \in \Sigma_0$, and (ii) the elements of $\mathcal{U}$ together cover $\pi^{-1}\Delta_\eps^n$ up to a subset of Lebesgue measure zero. We call such a set $\mathcal{U}$ a distinguished collection of open neighborhoods associated to $\eps$. 

Let $L$ be a $C^\infty$ hermitian line bundle on $X$, and let $\ell$ be a non-zero rational section of $L$. Assume that $L$ has a Lear extension $\Lbar$ over $\Xbar$. Let $V$ denote the set of irreducible components of $\pi^{-1}D$. There are well-defined rational numbers $a(x)$ for each $x \in V$ such that for $\ell$ viewed as a rational section of $\Lbar$, the equality 
\[ \divisor_{\Xbar} \ell = \overline{\divisor_X \ell} + \sum_{x \in V} a(x)  x \]
holds in $\Pic(\Xbar) \otimes \qq$. Following Proposition \ref{learchar}, this equality is to be interpreted as saying that on a coordinate chart $U$ of $\Xbar$ with center $p$ in the smooth locus of $x \in V$, adapted to $\pi^{-1}D$ and disjoint from $\overline{\divisor_X \ell}$ the function $\log \|\ell\| - a(x) \log |t_1|$ on $U \setminus \pi^{-1}D$ extends continuously over $U$. 

Let $\eps \in \rr_{>0}$. Assume a distinguished collection $\mathcal{U}$ of open neighborhoods associated to $\eps$ is given. Let $U \subset \Xbar$ denote the union of all elements of $\mathcal{U}$. Then we define a logarithmic current $\chi$ on $U \setminus \pi^{-1}D$ associated to $\ell$ as follows. On an $U_{e,\eps}$ with $e \notin \Sigma_0$ we put $\chi=\log \|\ell\| - a(x) \log |t_1|$, if $x$ is the unique irreducible component of $\pi^{-1}D$ such that $e \in x$. On an $U_{e,\eps}$ with $e \in \Sigma_0$
we put $\chi=\log \|\ell\| - a(x)\log|u|-a(y)\log|v|$, where $x$ is the irreducible component of $\pi^{-1}D$ corresponding to the branch through $e \in U_{e,\eps}$ given by the equation $u=0$, and where $y$ 
is the irreducible component of $\pi^{-1}D$ corresponding to the branch through $e \in U_{e,\eps}$ given by the equation $v=0$. We note that $\chi$ extends as a logarithmic current over $U \setminus (\pi^{-1}D)^\sing= U \setminus \Sigma$, with locus of indeterminacy given by 
$\overline{\divisor_X \ell}$.

We say that a function $\gamma \colon \Delta^n_\eps \setminus D \to \rr$ has a log singularity along $D$ if there exists $a \in \qq$ such that the function $\gamma - a \log|t_1|$ extends continuously over $\Delta_\eps^n$. In this case we call $a$ the order of the log singularity of $\gamma$ along $D$.                

\begin{thm} \label{thm:learextensiondeligne} Let $\pi \colon \Xbar \to \Delta^n$ be a nodal curve with $\Xbar$ smooth, such that $\pi$ is smooth over $Y=\Delta^n \setminus D$, where $D$ is the divisor of $\Delta^n$ given by the equation $t_1=0$. Let $X = \pi^{-1}Y$. Let $L, M$ be $C^\infty$ hermitian line bundles on $X$ and let $\ell, m$ be non-zero rational sections of $L, M$ such that $\overline{\divisor_X \ell}$ and $\overline{\divisor_X m}$ are disjoint from the singular locus $\Sigma$ of $\pi$. Let $\eps \in \rr_{>0}$ and let $\mathcal{U}=\{U_{e,\eps} \}$ be a distinguished collection of open neighborhoods on $\pi^{-1}\Delta_\eps^n$ with centers $e \in \Xbar_0$ associated to $\eps$. Let $\chi$ denote the current associated to $\ell$ as above.
Assume the following conditions hold: 
\begin{enumerate}
\item[(a)] $L, M$ have Lear extensions $\Lbar, \Mbar$ over $\Xbar$; 
\item[(b)] for each fiber $F$ of $\pi^{-1}D$ over $D$ the following holds. 
The first Chern current $\ceeone(\Mbar)$ of the continuous hermitian line bundle $\Mbar$ on $\mathrm{Sm}(\pi)$ restricts as a smooth $(1,1)$-form on $F \setminus F^{\mathrm{sing}} \subset \mathrm{Sm}(\pi)$. Moreover, the smooth $(1,1)$-form $\ceeone(\Mbar)|_{F \setminus F^{\mathrm{sing}}}$ extends as a 
smooth $(1,1)$-form over the normalization of $F$;
\item[(c)] the function
\[ \gamma \colon \Delta_\eps^n \setminus D \to \rr \, , \quad t \mapsto \sum_{e \in \Sigma_0} \int_{X_t \cap U_{e,\eps}} \chi \, \ceeone(M)  \] 
has a log singularity along $D$. 
\end{enumerate}
Then $\pair{L,M}$ has a Lear extension $\overline{\pair{L,M}}$ over $\Delta_\eps^n$. Assume that $n=1$, and let $c \in \qq$ be the order of $\gamma$ along $[0]$. Then the equality 
\[  \overline{\pair{L,M}} = \pair{\Lbar,\Mbar} + c  [0]  \]
of $\qq$-line bundles holds on $\Delta_\eps$. 
\end{thm}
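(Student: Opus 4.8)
The plan is to verify the local criterion of Proposition \ref{learchar}. Since $D=\{t_1=0\}$ is a \emph{smooth} divisor of $\Delta^n$, it has empty singular locus, so checking Lear extendability of $\pair{L,M}$ over $\Delta^n_\eps$ amounts to producing, with respect to a local holomorphic frame of the candidate extension $\pair{\Lbar,\Mbar}$, a rational number $\mu$ such that $\log\|\cdot\|-\mu\log|t_1|$ extends continuously; Proposition \ref{learchar} then also gives $\overline{\pair{L,M}}=\pair{\Lbar,\Mbar}+\mu D$, and we will see that $\mu=c$ (so $D=[0]$ when $n=1$). Thus it suffices to analyze the order of the logarithmic singularity of the Deligne metric on $\pair{L,M}$ along $D$, and the argument is uniform in $n$.

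After modifying $\ell,m$ by rational functions we may assume $\divisor_{\Xbar}\ell$ and $\divisor_{\Xbar}m$ have disjoint supports (still disjoint from $\Sigma$), and we take $\sigma=\pair{\ell,m}$ as the frame of $\pair{\Lbar,\Mbar}$. By (\ref{defmetricpairing}),
\[ \log\|\sigma\|\;=\;(\log\|m\|)[\divisor_X\ell]\;+\;\int_\pi\log\|\ell\|\,\ceeone(M)\,, \]
and we break the fibre integral along the distinguished collection $\mathcal U$, writing $\int_{X_t}=\sum_{e\in\Sigma_0}\int_{X_t\cap U_{e,\eps}}+\sum_{e\notin\Sigma_0}\int_{X_t\cap U_{e,\eps}}$ up to a null set. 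On a node patch $U_{e,\eps}$ we substitute $\log\|\ell\|=\chi+a(x)\log|u|+a(y)\log|v|$ (notation as in the construction of $\chi$), and the sum over $e\in\Sigma_0$ of the terms $\int_{X_t\cap U_{e,\eps}}\chi\,\ceeone(M)$ is exactly $\gamma$, which by hypothesis (c) contributes a logarithmic singularity of order $c$ along $D$. On a patch $U_{e,\eps}$ with $e\notin\Sigma_0$ the map $\pi$ is a submersion and $\Lbar$ is a line bundle whose metric is continuous off its divisor, so $\log\|\ell\|=a(x)\log|t_1|+\rho_e$ with $\rho_e$ continuous away from $\overline{\divisor_X\ell}$; likewise the horizontal divisors of $\ell$ meet $\pi^{-1}D$ inside $\Sm(\pi)$ (because $\overline{\divisor_X\ell}$ avoids $\Sigma$), where the metric of $\Mbar$ is continuous, so $(\log\|m\|)[\divisor_X\ell]$ has a logarithmic singularity along $D$ recorded by the $b(x)$'s.

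It remains to show that every contribution other than $\gamma$ equals an explicit multiple of $\log|t_1|$ plus a function extending continuously across $D$, and that the sum of these multiples is the order $\nu$ of vanishing of $\sigma=\pair{\ell,m}$ along $D$ when $\sigma$ is viewed as a rational section of $\pair{\Lbar,\Mbar}$. The terms of the shape $(\mathrm{const})\cdot\log|t_1|\cdot\!\int_{X_t\cap U_{e,\eps}}\ceeone(M)$ — which occur at the smooth patches, and at the node patches after replacing $\log|u|$ by $\log|t_1|-\log|v|$ — are handled using hypothesis (b): it forces $\ceeone(\Mbar)$ to restrict to a smooth $(1,1)$-form on each branch of each fibre, so that $\int_{X_t\cap U_{e,\eps}}\ceeone(M)$, computed via Stokes as the integral of a smooth primitive of $\ceeone(M)$ over a circle lying in $\Sm(\pi)$ that moves with $t$ toward a circle on the central fibre, extends continuously across $D$; hence $\log|t_1|$ times it is $(\mathrm{const})\log|t_1|+(\mathrm{continuous})$. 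The residual integrals $\int_{X_t\cap U_{e,\eps}}\rho_e\,\ceeone(M)$ and $\int_{X_t\cap U_{e,\eps}}\log|v|\,\ceeone(M)$ converge, again by (b), to the corresponding integrals over the limiting branches of $\Xbar_0$, so they too extend continuously. Collecting the orders and matching them with the divisor of $\pair{\ell,m}$ inside $\pair{\Lbar,\Mbar}$ — whose vertical part along $D$ is precisely the graph-combinatorial intersection number appearing in Proposition \ref{learfromhdj} — identifies the total multiple as $\nu+c$; since the order $\nu$ is absorbed into the frame $\pair{\Lbar,\Mbar}$, Proposition \ref{learchar} yields that $\overline{\pair{L,M}}$ exists and equals $\pair{\Lbar,\Mbar}+c\,[0]$ when $n=1$ (and a Lear extension over $\Delta^n_\eps$ in general).

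The main obstacle is organizational rather than a single estimate: one must be certain that the failure of ``the Lear extension of the Deligne pairing equals the Deligne pairing of the Lear extensions'' is localized entirely at the node patches and is completely accounted for by $\gamma$ — equivalently, that the Chern-form mass $\int_{X_t\cap U_{e,\eps}}\ceeone(M)$ of each patch stabilizes to first order in $\log|t_1|$ as $t\to D$. This is exactly what hypothesis (b) secures; without it the smooth-patch and $\log|u|,\log|v|$ contributions could themselves develop a worse-than-logarithmic singularity, and the ``height jump'' would not even be well defined. With (b) and (c) in force, the remaining steps are careful bookkeeping.
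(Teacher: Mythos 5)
There is a genuine gap in your treatment of the node patches. After writing $\log\|\ell\|=\chi+a(x)\log|u|+a(y)\log|v|$ on $U_{e,\eps}$ with $e\in\Sigma_0$ and substituting $\log|u|=\log|t_1|-\log|v|$, you assert that $\int_{X_t\cap U_{e,\eps}}\log|v|\,\ceeone(M)$ extends continuously across $D$ ``by (b)''. Hypothesis (b) only constrains the restriction of $\ceeone(\Mbar)$ to the central fibre away from its nodes; it says nothing about how the mass of $\ceeone(M)$ on the nearby fibres $X_t$ distributes inside the collar, and in the situation the theorem is designed for this mass does not localize near the limiting branches. Indeed, for $M=\delta_P^*\bb$ (the case needed in Proposition \ref{maintechnical}) one has on the collar $\ceeone(M)\sim_\pi \frac{2d_3}{\log|t_1|}\frac{1}{2\pi}\,\d\theta\,\d\log r$ with $d_3=-F(e)$: a total mass tending to $2F(e)>0$, spread uniformly at logarithmic scale. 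Weighting it by $\log|v|$, which is of size up to $|\log|t_1||$ on half of the annulus, gives $\int_{X_t\cap U_{e,\eps}}\log|v|\,\ceeone(M)\sim F(e)\log|t_1|$, not a bounded function. So your bookkeeping silently drops contributions of order $\sum_{e\in\Sigma_0}F(e)\bigl(a(y)-a(x)\bigr)\log|t_1|$, which are generically nonzero, and the identification of the total order with $\nu+c$ is unjustified (and incorrect as set up). Note that the unweighted patch masses $\int_{X_t\cap U_{e,\eps}}\ceeone(M)$ --- which is what you say (b) ``secures'' --- are not the delicate point; it is precisely the $\log$-weighted collar integrals that carry the height jump, and controlling them is the analytic heart of the theorem. (Your Stokes argument for continuity of the unweighted masses is also thinner than it looks, since a merely continuous extension of the metric gives no control of the connection form on the moving boundary circles, but this is secondary.)

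For comparison, the paper never splits $a(x)\log|u|+a(y)\log|v|$ asymmetrically: the full two-branch linear part is what gets subtracted in the definition of $\chi$, and the remainder $\log\|\ell\|-\chi$ is integrated only after regrouping each fibre by the sets $U_x=r^{-1}(x)$ of a Picard--Lefschetz retraction onto the irreducible components $x$ of $\pi^{-1}D$. Over $U_x$ the remainder contributes $a(x)\log|t_1|\int_{X_t\cap U_x}\ceeone(M)\sim a(x)(F_x\cdot\Mbar)\log|t_1|$, the half-collar discrepancies cancelling between the two sides of each node, and summing over $x$ reproduces exactly the vertical part of $\ord_0\pair{\ell,m}$, so that the defect between $\overline{\pair{L,M}}$ and $\pair{\Lbar,\Mbar}$ is precisely the order of $\gamma$. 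To repair your argument you would either have to adopt this symmetric grouping, or prove directly that the $\log|u|$- and $\log|v|$-weighted collar integrals stay bounded --- which hypothesis (b) does not give you, and which is false in the intended applications whenever $F(e)\neq 0$.
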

\begin{proof} We note that up to adding a bounded continuous function on $\Delta_\eps^n$, the integral in (c) remains unchanged upon replacing $\ell$ by $f \ell$ or $m$ by $gm$, where $f$, $g$ are nonzero rational functions on $\Xbar$. Using this freedom to change $\ell$, $m$, and shrinking $\Delta_\eps$ if necessary  we may assume in addition that $\overline{\divisor_X \ell}$ and $\overline{\divisor_X m}$ have disjoint support.  Then $\ell, m$ give rise to a generating section $\pair{\ell,m}$ of $\pair{L,M}$, with log norm
\[ \log \| \pair{\ell,m} \| = (\log \| m \|)[\divisor \ell] + \int_{X_t} \log \|\ell\| c_1(M)  \]
by equation (\ref{defmetricpairing}).
By Proposition \ref{learchar} our task is to show that the restriction of $\log\|\pair{\ell,m}\|$ to $\Delta_\eps^n \setminus D$ has a log singularity along $D$.  The assumptions that $L, M$ have Lear extensions over $\Xbar$ and that $\overline{\divisor_X \ell}$ and $\overline{\divisor_X m}$ are disjoint and disjoint from $\Sigma$ imply that $(\log \| m \|)[\divisor \ell] $ has a log singularity along $D$. We are thus reduced to showing that the function 
\[ \Delta_\eps^n \setminus D \to \rr \, , \quad t \mapsto \int_{X_t}  \log \|\ell\| c_1(M)  \]
has a log singularity along $D$. Let $R$ denote the set of $e \in \Xbar_0$ such that $e$ occurs as a center of one of the given distinguished open neighborhoods associated to $\eps$. Recall that $\Sigma_0 \subset R$. As the elements of $\mathcal{U}$ together cover $\pi^{-1}\Delta_\eps^n$ up to a subset of Lebesgue measure zero we can write
\begin{equation} \label{bigsplit}  \int_{X_t}  \log \|\ell\|  \, \ceeone(M)  = \sum_{e \in R} \int_{X_t \cap U_{e,\eps}}  \chi \, \ceeone(M) + \sum_{e \in R} \int_{X_t \cap U_{e,\eps}} \left( \log \|\ell\| - \chi \right)\, \ceeone(M) 
\end{equation}
for each $t \in \Delta_\eps^n \setminus D$.
In the first sum on the right hand side, when $e \notin \Sigma_0$, the integrand is a bounded continuous current on $U_{e,\eps}$, and the integral extends as a bounded continous function over $\Delta_\eps^n$. Hence for our purposes we may replace the first sum by the restricted sum
\[ \sum_{e \in \Sigma_0} \int_{X_t \cap U_{e,\eps}} \chi \, \ceeone(M) \]
over centers $e \in \Sigma_0$. This sum has a log singularity along $D$ by assumption (c). 

As to the second sum on the right hand side, Picard-Lefschetz theory implies (see for example \cite[Lemma X.9.19]{acg}) that for $\eps$ sufficiently small there exists a deformation retraction $r \colon \Xbar \to \Xbar_0$ such that for each $t \in \Delta_\eps^n \setminus D$ and each $p \in \Sigma_0$ the intersection $r^{-1}(p) \cap X_t$ is a smoothly embedded circle of $X_t$. Let $V$ denote the set of irreducible components of $\pi^{-1}D$. For each $x \in V$ let $U_x=r^{-1}(x)$. Then for each $x \in V$ we have a real analytic submersion $U_x \to \Delta_\eps^n\setminus D$.
We can thus estimate
\[  \sum_{e \in R} \int_{X_t \cap U_{e,\eps}}  \left( \log \|\ell\| - \chi \right) \ceeone(M)  \sim
\sum_{x \in V} \int_{X_t \cap U_{x}} \left( \log \|\ell\| - \chi \right) \ceeone(M)     \]
for each $t \in \Delta_\eps^n \setminus D$.
Viewing $\ell$ as a rational section of $\Lbar$ and writing
\[ \divisor_{\Xbar} \ell = \overline{\divisor_X \ell} + \sum_{x \in V} a(x)  x \]
we recall that for each $x \in V$ the equality $\log \|\ell\| - \chi = a(x) \log |t_1|$ holds over $U_{x}$. This gives the equality
\[ \int_{X_t \cap U_{x}} \left( \log \|\ell\| - \chi\right)   \ceeone(M) = a(x) \log|t_1| \int_{X_t \cap U_{x}} \ceeone(M) \, . \]
By assumption (b) $\ceeone(M)$ extends as a smooth $(1,1)$-form over the normalization of each fiber of $\pi^{-1}D$ over $D$. This gives, putting $F_x = x \cap \Xbar_0$, the estimate
\[ \int_{X_t \cap U_{x}} \ceeone(M) \sim (F_x \cdot \Mbar)   \]
as $t \to 0$. 
We obtain that
\[  \sum_{e \in R} \int_{X_t \cap U_{e,\eps}} \left( \log \|\ell\| - \chi \right) \ceeone(M)  \sim
   \log|t_1| \sum_{x \in V}  a(x) (F_x \cdot \Mbar) \, ,  
 \]
hence the second sum on the right hand side in (\ref{bigsplit}) has a log singularity of order $ \sum_{x \in V}  a(x) (F_x \cdot \Mbar) $
along $D$. We conclude that 
\begin{equation} \label{generalestimate} \int_{X_t} \log \|\ell\|c_1(M) \sim 
\sum_{e \in \Sigma_0} \int_{X_t \cap U_{e,\eps}} \chi \, \ceeone(M) + \sum_{x \in V} a(x) (F_x \cdot \Mbar) \log |t_1|  
\end{equation} 
has a log singularity along $D$, and hence that the Lear extension of $\pair{L,M}$ exists. 

In order to compute the Lear extension of $\pair{L,M}$ explicitly we make the above argument more precise, under the additional assumption that the base is a small disc $\Delta_\eps$.
We observe first of all that $\pair{\Lbar,\Mbar}$ is a line bundle extending $\pair{L,M}$ over $\Delta$. Following Proposition \ref{learchar} we have that $ \overline{\pair{L,M}} = \pair{\Lbar,\Mbar} + c \, [0]$ if and only if $\log\|s\| \sim c \log |t|$ as $t \to 0$ for a generating section $s$ of $\pair{\Lbar,\Mbar}$. The latter condition is equivalent with
\[ \log \| \pair{\ell,m} \| \sim \left( \ord_0 \pair{\ell,m} + c \right) \, \log|t| \]
as $t \to 0$. We will compute the value of $c$ from this estimate. Let $G$ be the dual graph of $\Xbar_0$. Viewing $\ell, m$ as rational sections of $\Lbar, \Mbar$ we may write
\[ \divisor \ell = \overline{\divisor_X \ell} + \sum_{x \in \V(G)} a(x) \, x \, , \quad
\divisor m = \overline{\divisor_X m} + \sum_{y \in \V(G)} b(y) \, y  \]
for suitable $a(x), b(y) \in \qq$. Still assuming that the supports of $\overline{\divisor_X \ell}$ and $\overline{\divisor_X m}$ do not meet, we find that the local intersection product $\left(\divisor \ell \cdot \divisor m \right)_0$ of $\divisor \ell$ and $\divisor m$ above the origin satisfies
\[ \begin{split} 
\left(\divisor  \ell \cdot  \divisor m \right)_0  & =  \overline{\divisor_X m}  \cdot\sum_{x \in \V(G)}   a(x) x  + \overline{\divisor_X \ell}\cdot \sum_{y \in \V(G)}   b(y) y \\ & \hspace{1cm} + \sum_{x,y \in \V(G)} a(x) b(y) (x\cdot y) \, . 
\end{split} 
\]
We have
\[  ( \overline{\divisor_X m}\cdot \sum_{x \in \V(G)}   a(x) \,x ) \log|t| \sim (\log\|\ell\|)[\divisor m] \, , \]
and similarly
\[   ( \overline{\divisor_X \ell}\cdot \sum_{y \in \V(G)} b(y) \, y ) \log|t| \sim (\log\|m\|)[\divisor \ell] \, . \]
As moreover
\[ \ord_0 \pair{\ell,m} = \left( \divisor \ell \cdot \divisor m \right)_0   \]
we find that
\[ \begin{split} \ord_0 \pair{\ell,m}\log|t|  \sim (\log\|\ell\|) & [\divisor m]  +  (\log\|m\|)[\divisor \ell] \\ & + \sum_{x,y \in \V(G)} a(x) b(y) (x\cdot y) \log|t| \, . \end{split} \]
Recalling that
\[ \log \| \pair{\ell,m} \| = (\log \| m \|)[\divisor \ell] + \int_{X_t} \log \|\ell\|c_1(M)  \]
it follows that if $ \overline{\pair{L,M}} = \pair{\Lbar,\Mbar} + c \, [0]$ then
\[ \begin{split} c \log|t| & \sim \log \| \pair{\ell,m} \| - \ord_0 \pair{\ell,m}\log|t| \\
 & \sim \int_{X_t} \log \|\ell\|c_1(M) - (\log\|\ell\|)[\divisor m] 
- \sum_{x,y \in \V(G)} a(x) b(y) (x\cdot y) \log|t| \\
 & \sim \int_{X_t} \log \|\ell\|c_1(M) - \sum_{x \in V(G)} a(x) (  \overline{\divisor_X m} \cdot x) \log|t| \\ & \hspace{1cm} - \sum_{x,y \in \V(G)} a(x) b(y) (x\cdot y) \log|t| \\
 & \sim \int_{X_t} \log \|\ell\|c_1(M) - \sum_{x \in V(G)} a(x) (\Mbar \cdot x) \log |t| \, . 
\end{split} \]
From (\ref{generalestimate}) we obtain that
\[ \int_{X_t} \log \|\ell\|c_1(M) \sim 
\sum_{e \in \Sigma_0} \int_{X_t \cap U_{e,\eps}} \chi \, \ceeone(M) + \sum_{x \in V(G)} a(x) (\Mbar \cdot x) \log |t| \, . \] 
We conclude that
\[ c \log |t| \sim \sum_{e \in \Sigma_0} \int_{X_t \cap U_{e,\eps}} \chi \, \ceeone(M) \, . \]
This finally proves our formula for $\overline{\pair{L,M}}$.
\end{proof}

\section{Main auxiliary result} \label{mainaux}

Let $\Ybar$ be a smooth complex algebraic variety, or a polydisc, and let $D$ be a normal crossings divisor on $\Ybar$. Write $Y = \Ybar \setminus D$. Let $\pi \colon \Xbar \to \Ybar$ be a nodal curve, assumed to be smooth over $Y \subset \Ybar$. We write $X=\pi^{-1}Y$ as usual. Let $j \colon J \to Y$ be the associated jacobian fibration. Associated to any section $P \colon \Ybar \to \Xbar$, we have an Abel-Jacobi map $\delta_P \colon X \to J$. The family $J \to Y$ carries a canonical Poincar\'e bundle $\bb$ with a canonical $C^\infty$ hermitian metric. Let $\omega$ denote the relative dualizing sheaf of $\pi$.
As before we put $ \kappa_1=\pair{\omega,\omega}$ and $ \psi = P^*\omega = \pair{\oo(P),\omega}$. 
The purpose of this section is to prove the following main auxiliary result.
\begin{thm} \label{leardeltasq} The Lear extension of $\pair{\delta_P^*\bb,\delta_P^*\bb}$ over $\Ybar$ exists. In the case that $\Ybar$ is the unit disc, assume that $P$ passes through the smooth locus of $\pi$, and let  $(\Gamma,K)$ be the polarized metrized graph associated to the dual graph $G$ of $\Xbar_0$. Denote by $g_\mu$ the admissible Green's function of $(\Gamma,K)$, and by $\vareps$ its epsilon-invariant. Then  we have 
\[ \overline{\pair{\delta_P^*\bb,\delta_P^*\bb}} = 4h\, \psi + \kappa_1 - 
\left(4h \, g_\mu(x,x) + \varepsilon \right) [0] \, , \]
where $x \in V(G)$ is the irreducible component of $\Xbar_0$ to which $P$ specializes. 
\end{thm}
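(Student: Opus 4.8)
The plan is to apply Theorem~\ref{thm:learextensiondeligne} with $L=M=\delta_P^*\bb$ and $\Lbar=\Mbar=\overline{\delta_P^*\bb}$, the latter furnished by Proposition~\ref{deltalear}. First I would reduce to the case $\Xbar$ smooth: passing to the minimal desingularization changes neither $\delta_P^*\bb$ (as $P$ lands in $\Sm(\pi)$ and the jacobian fibration is insensitive to the desingularization), nor the Deligne pairing, nor, by Proposition~\ref{minimaldesing}, the associated metrized graph. Since Lear extendability is local on charts of $\Ybar$ adapted to $D$ and $D^{\mathrm{sing}}$ has codimension $\geq 2$, it then suffices to treat the polydisc situation of Theorem~\ref{thm:learextensiondeligne}. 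Hypothesis~(a) there is exactly Proposition~\ref{deltalear}. For hypothesis~(b), Corollary~\ref{cor:ceeone} gives $\ceeone(\delta_P^*\bb)=2h\,\mu_\Ar$ on smooth fibres, and the limit identification of the Abel--Jacobi map in Proposition~\ref{asymptAJ}(b) shows that $\ceeone(\overline{\delta_P^*\bb})$ restricts on each component $F_y$ of a fibre of $\pi^{-1}D$ to $2q(y)\,\mu_{\Ar,\tilde{F}_y}$, with $q(y)$ the genus of the normalization $\tilde{F}_y$; this is smooth and extends over $\tilde{F}_y$.

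The crux is hypothesis~(c) and, when $\Ybar=\Delta$, the precise order of the log singularity of $\gamma(t)=\sum_{e\in\Sigma_0}\int_{X_t\cap U_{e,\eps}}\chi\,\ceeone(\delta_P^*\bb)$. Fix a node $e$ and work near $e$ on the base $\Xbar$: in local coordinates $(u,v)$ (the remaining coordinates of $\Ybar$ being inessential) one has $\pi^{-1}D=\{uv=0\}$, so the degeneration has two essential parameters and the several-variable nilpotent orbit theorem applies. In their several-variable form, Propositions~\ref{asymptperiod} and~\ref{asymptAJ} (equivalently, Pearlstein's $SL_2$-orbit theorem as used in \cite{bhdj}, \cite{hdj}, \cite{pearldiff}, \cite{hp}) give, with $t=uv$ and on the vanishing annulus $X_t\cap U_{e,\eps}$ parametrized by $\sigma=\log|u|/\log|t|\in(0,1)$, that $\Im\Omega=-\tfrac{1}{2\pi}\log|t|\,A+O(1)$ and $\Im\hat\delta_P=-\tfrac{1}{2\pi}\log|t|\,\beta_e(\sigma)+O(1)$, where $A$ is the monodromy intersection matrix and $\beta_e$ is \emph{affine} in $\sigma$ with $\beta_e(1)-\beta_e(0)$ the period vector $c^{(e)}$ of the vanishing cycle $\delta_e$. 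The hard part will be turning this into an $O(1)$-precise statement about $\chi$. Feeding the above into Proposition~\ref{normbiext} and subtracting the logarithmic terms dictated by Proposition~\ref{deltalear} (the coefficient of $F_y$ in $\divisor_{\Xbar}\ell-\overline{\divisor_X\ell}$ is $-r(x,y)$), one should obtain on the annulus
\[ \chi=\log|t|\cdot\Phi_e(\sigma)+O(1),\qquad \Phi_e(\sigma)=\kappa_e\,\sigma(1-\sigma), \]
a \emph{quadratic} with $\Phi_e(0)=\Phi_e(1)=0$ (the two ends of the annulus remain a bounded distance from $e$, so $\chi$ is $O(1)$ there), and with $-\kappa_e=(c^{(e)})^{t}A^{+}c^{(e)}$ the leading coefficient of the Gaussian. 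The decisive combinatorial input is then the identity $(c^{(e)})^{t}A^{+}c^{(e)}=F(e)$ — the cycle-space companion of the leverage-score formula $\mathbf{1}_e^{t}L^{+}\mathbf{1}_e=r(e^-,e^+)$, and a pointwise refinement of Foster's theorem $\sum_e F(e)=\dim M_{-2}$ — which yields $\kappa_e=-F(e)$. Since $\ceeone(\delta_P^*\bb)=2h\,\mu_\Ar$ converges weakly to $2h\,\mu$ and restricts on this annulus to mass $2F(e)=2\int_e\mu_{\can}^{\mathrm{cts}}$, one concludes
\[ \gamma(t)\ \sim\ \log|t|\sum_{e\in\Sigma_0}2F(e)\int_0^1\big(-F(e)\big)\sigma(1-\sigma)\,\d\sigma\ =\ -\tfrac13\sum_{e}F(e)^2\,\log|t|, \]
so $\gamma$ has a log singularity of order $c=-\eta(\Gamma)$. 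Thus Theorem~\ref{thm:learextensiondeligne} yields the existence of $\overline{\pair{\delta_P^*\bb,\delta_P^*\bb}}$, and over $\Delta$ it gives $\overline{\pair{\delta_P^*\bb,\delta_P^*\bb}}=\pair{\overline{\delta_P^*\bb},\overline{\delta_P^*\bb}}-\eta(\Gamma)[0]$.

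It then remains to compute the right-hand side over $\Delta$. Writing $\overline{\delta_P^*\bb}=2\oo(P)+\pi^*\psi+\omega-\sum_y r(x,y)F_y$ and expanding the Deligne pairing bi-multiplicatively — using the isomorphisms~(\ref{canisoms}) and~(\ref{canonicaladj}), $\pair{\pi^*\psi,N}=\psi^{\otimes\deg N}$, $\pair{\oo(P),\oo(F_y)}=P^*\oo(F_y)$ (trivial for $y\neq x$ and $\oo([0])$ for $y=x$, so its $r(x,y)$-weighted sum is $r(x,x)[0]=0$), $\pair{\omega,\oo(F_y)}=K(y)[0]$ with $K=R(\omega)$, and $\pair{\oo(F_y),\oo(F_{y'})}=(F_y\cdot F_{y'})[0]=-L(y,y')[0]$ with $L$ the discrete Laplacian of $G$ — one finds
\[ \pair{\overline{\delta_P^*\bb},\overline{\delta_P^*\bb}}=4h\,\psi+\kappa_1+\big(V_r\cdot V_r-2\,r(x,K)\big)[0],\qquad V_r\cdot V_r=-\,r_x^{t}L\,r_x, \]
where $V_r=\sum_y r(x,y)F_y$, $r_x=(r(x,y))_{y\in\V(G)}$ and $r(x,K)=\sum_y K(y)r(x,y)$. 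Evaluating $r_x^{t}Lr_x=\sum_y r(x,y)(Lr_x)(y)$ by pairing the function $r(x,\cdot)$ against the current identity~(\ref{discretelaplaceresistance}) and using $\int_\Gamma r(x,\cdot)\mu_\can=2\tau(\Gamma)$, $\int_\Gamma r(x,\cdot)\mu_{\can}^{\mathrm{cts}}=2\tau^{\mathrm{cts}}(\Gamma,x)$ together with Proposition~\ref{tauandeta}, one gets $r_x^{t}Lr_x=4\tau(\Gamma)-\eta(\Gamma)$, so $V_r\cdot V_r=\eta(\Gamma)-4\tau(\Gamma)$. Substituting and invoking Proposition~\ref{taualternative} in the form $2r(x,K)+4\tau(\Gamma)=4h\,g_\mu(x,x)+\varepsilon$ gives
\[ \overline{\pair{\delta_P^*\bb,\delta_P^*\bb}}=4h\,\psi+\kappa_1+\big(\eta(\Gamma)-4\tau(\Gamma)-2r(x,K)-\eta(\Gamma)\big)[0]=4h\,\psi+\kappa_1-\big(4h\,g_\mu(x,x)+\varepsilon\big)[0], \]
which is the assertion. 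The main obstacle throughout is the local analysis at the nodes underlying hypothesis~(c): controlling the metric on $\delta_P^*\bb$ to order $O(1)$ near a codimension-two point of $\Xbar$ — precisely where the several-variable $SL_2$-orbit theorem enters — and identifying the resulting height-jump coefficient at $e$ with $-F(e)$.
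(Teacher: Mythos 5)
Your overall architecture is the same as the paper's: reduce to smooth total space, apply Theorem~\ref{thm:learextensiondeligne} with $L=M=\delta_P^*\bb$ and $\Lbar=\Mbar=\overline{\delta_P^*\bb}$ (Proposition~\ref{deltalear}), show the height jump at each node contributes $-\eta(\Gamma,e)=-\tfrac{1}{3}F(e)^2$ so that $\overline{\pair{\delta_P^*\bb,\delta_P^*\bb}}=\pair{\overline{\delta_P^*\bb},\overline{\delta_P^*\bb}}-\eta(\Gamma)[0]$, and then compute $\pair{\overline{\delta_P^*\bb},\overline{\delta_P^*\bb}}$ combinatorially; your second half (the expansion giving $4h\,\psi+\kappa_1-(2\,r(x,K)+r_x^{t}L\,r_x)[0]$, the evaluation $r_x^{t}L\,r_x=4\tau(\Gamma)-\eta(\Gamma)$ via (\ref{discretelaplaceresistance}) and Proposition~\ref{tauandeta}, then Proposition~\ref{taualternative}) is exactly the paper's Proposition~\ref{selfdelignepairingLear}. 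The genuine divergence is how the coefficient $\kappa_e=d_3$ of the $\log|u|\log|v|/\log|t|$ term is identified. The paper does this indirectly: it separates the diagonal from $\tilde P$ on $\Zbar=\Xbar\times_\Delta\Xbar$ by Knudsen stabilization, computes the labelled dual graph of the modified family, derives $r(x,y)(a,b)=r(x,z)a+r(x,w)b+F(e)\tfrac{ab}{a+b}$, and pins down $d_1,d_2,d_3$ by test curves $q\mapsto(q^m,q^n)$ using Propositions~\ref{learchar} and~\ref{Leardelta}. Your route --- the difference of the two local monodromy translation vectors is the period vector $c^{(e)}$ of the vanishing cycle, hence $-\kappa_e=(c^{(e)})^{t}(A')^{-1}c^{(e)}$, which equals $F(e)$ --- is correct and more intrinsic, avoiding the stabilization altogether.

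However, as written this crux rests on assertions you do not establish. First, $\beta_e(1)-\beta_e(0)=c^{(e)}$ is not a consequence of Propositions~\ref{asymptperiod} and~\ref{asymptAJ}, which only produce some vectors $b_1,b_2$; identifying $A(b_1-b_2)$ with $\pm c^{(e)}$ needs a separate Picard--Lefschetz argument (e.g.\ the loop around $u=0$ composed with the inverse of the loop around $v=0$ is homotopic to the vanishing circle inside a fibre, and transporting the class of a path from $z$ to $P$ around a loop $\gamma$ in the fibre shifts it by $\pm[\gamma]$). Second, the identity $(c^{(e)})^{t}(A')^{-1}c^{(e)}=F(e)$ must be proved: it follows from $A'=CC^{t}$ with $C$ the matrix of vanishing-cycle coordinates and from the fact that the row space of $C$ is the cycle space of $G$ (the relations among vanishing cycles being the cut space), so that the quantity is the $e$-diagonal entry of the orthogonal projection onto the cycle space; this needs to be written out. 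Third, for condition~(c) weak convergence of $2h\,\mu_\Ar$ plus the total collar mass $2F(e)$ is not enough, since $\chi$ grows like $\log|t|$ there: one needs the density of $\ceeone(\delta_P^*\bb)$ on the collar to be asymptotically uniform in $\sigma$ (it is, being $\deldelbar$ of the same quadratic term) and a term-by-term bound on the cross terms with the bounded remainder --- precisely the paper's computation (\ref{expandlogs})--(\ref{elementary}). Finally, a minor point: over a polydisc base with $n\geq 2$ there is no minimal desingularization; the paper reduces the existence statement to smooth $\Xbar$ via \cite[Lemma 3.2]{alt}. None of this invalidates the plan, but it is where the actual content of Proposition~\ref{maintechnical} lies, and your sketch currently defers it.
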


We will derive Theorem \ref{leardeltasq} from Propositions \ref{selfdelignepairingLear} and \ref{maintechnical} below. 
\begin{prop} \label{selfdelignepairingLear}
Let $\pi \colon \Xbar \to \Delta$ be a nodal curve over the unit disc, smooth over $\Delta^*$, and assume that the total space $\Xbar$ is smooth. Also assume that a section $P$ of $\pi$ is given, which then necessarily passes through $\Sm(\pi)$. Then the Lear extension of $\delta_P^*\bb$ exists over $\Xbar$. Let $(\Gamma,K)$ be the polarized metrized graph associated to the dual graph $G$ of $\Xbar_0$. Denote by $g_\mu$ the admissible Green's function of $(\Gamma,K)$, by $\vareps$ the epsilon-invariant of $(\Gamma,K)$, and by $\eta$ the eta-invariant of $\Gamma$ as defined in (\ref{eta}).
The Deligne pairing of $\overline{\delta_P^*\bb}$ with itself satisfies
\[ \pair{\overline{\delta_P^*\bb},\overline{\delta_P^*\bb}} = 4h \, \psi + \kappa_1 - (4h \, g_\mu(x,x) + \varepsilon -\eta ) [0]  \]
where $x \in V(G)$ is the irreducible component of $\Xbar_0$ to which $P$ specializes. 
\end{prop}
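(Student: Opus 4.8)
The plan is to combine the ingredients assembled in the preceding sections. First I would invoke Proposition \ref{deltaPupperstar}, which gives the canonical isometry $\delta_P^*\bb \isom \opar^{\otimes 2}\otimes \pi^*\pair{\opar,\omar}\otimes\omar$ of $C^\infty$ hermitian line bundles on $X$. By Proposition \ref{deltalear} each of the line bundles $\oo(P)_\Ar$, $\pi^*\pair{\opar,\omar}$ (which is pulled back from the base, hence extends trivially) and $\omar$ has a Lear extension over $\Xbar$, and moreover $\overline{\delta_P^*\bb}$ itself has a Lear extension over $\Xbar$, explicitly
\[ \overline{\delta_P^*\bb} = 2\,\oo(P) + \pi^*\pair{\oo(P),\omega} + \omega - \sum_{y\in V(G)} r(x,y)\,y \]
by formula (\ref{LearonXbar}). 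So the first task is to show that $\pair{\overline{\delta_P^*\bb},\overline{\delta_P^*\bb}}$ makes sense as a $\qq$-line bundle on $\Xbar$ and to compute it; this is purely a bookkeeping exercise in the bilinearity of the Deligne pairing together with the standard identities (\ref{canisoms}), (\ref{canonicaladj}). That computation produces a base-line term $4h\,\psi + \kappa_1$ plus a correction supported on $[0]$ coming from the vertical divisor $-\sum_y r(x,y)\,y$ in (\ref{LearonXbar}) paired against itself and against $2\oo(P)+\pi^*\pair{\oo(P),\omega}+\omega$.

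The heart of the proof is then to evaluate that correction term and to identify it with $-(4h\,g_\mu(x,x)+\vareps-\eta)$. Here I would translate everything into the combinatorics of the polarized metrized graph $(\Gamma,K)$: a vertical divisor $\sum_y c(y)\,y$ on $\Xbar$ contributes, under the Deligne pairing above the origin, intersection numbers $(x\cdot y)$ which are exactly the off-diagonal entries of (minus) the discrete Laplacian $L$ of $G$, so $\sum_{x,y} c(x)c'(y)(x\cdot y)$ becomes a pairing expressed through $\bar g$; and the coefficients $r(x,y)$ appearing in (\ref{LearonXbar}) are the effective resistances on $\Gamma$. Using the self-intersection formula for the relative dualizing sheaf on a minimal regular model (so that $(\omega\cdot y)_0 = K(y)$ and $(\oo(P)\cdot y)_0=\delta_x(y)$), the correction reduces to a sum of terms of the shape $\bar g(\,\cdot\,,\,\cdot\,)$ and $\sum_y r(x,y)K(y)$, i.e.\ $r(x,K)$, together with the ``continuous'' contribution $\tau(\Gamma)$ or rather its continuous part $\eta(\Gamma)$ arising because passing to the minimal desingularization subdivides each edge of weight $n$ into $n$ unit edges (Proposition \ref{minimaldesing}), so that the discrete sum $\sum_{x,y}r(x,y)(x\cdot y)$ over the refined graph differs from the naive metrized-graph quantity by exactly the Foster/eta terms quantified in (\ref{onlye}) and Proposition \ref{tauandeta}. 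Then I would feed these into the admissible-graph identities already proved: Proposition \ref{taualternative} gives $2r(x,K)+4\tau(\Gamma)=4h\,g_\mu(x,x)+\vareps$, and Proposition \ref{tauandeta} gives $4\tau^{\mathrm{cts}}(\Gamma,x)=\eta(\Gamma)+\int_\Gamma r(x,y)\,\nu(y)$; combining these, together with $\tau(\Gamma)=\tau^{\mathrm{dis}}(\Gamma,x)+\tau^{\mathrm{cts}}(\Gamma,x)$ and the relation (\ref{discretelaplaceresistance}), lets one solve for the combination that appears and recognise it as $4h\,g_\mu(x,x)+\vareps-\eta$.

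The step I expect to be the main obstacle is precisely matching the ``desingularization defect'' $\eta(\Gamma)$. The formula (\ref{LearonXbar}) for $\overline{\delta_P^*\bb}$ is stated on $\Xbar$ with its given (smooth) total space, but the effective resistances $r(x,y)$ are those of the \emph{weighted} dual graph $G$, whereas the Deligne-pairing intersection numbers $(x\cdot y)_0$ are computed on $\Xbar_0$ directly, whose dual graph is $G$ with its weights, not the refined graph $\tilde G$ of the minimal desingularization. Keeping careful track of which graph each quantity lives on, and verifying that the discrepancy between $\sum_{x,y} r_G(x,y)(x\cdot y)_{\Xbar_0}$ and the analogous sum after subdivision is exactly $\eta(\Gamma)=\tfrac13\sum_e F(e)^2\ell(e)$, will require the explicit local computation in (\ref{onlye}) and Proposition \ref{minimaldesing}; a sign error or an off-by-one in the subdivision would derail the identification. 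Once that is in hand, the assembly via Propositions \ref{taualternative} and \ref{tauandeta} is routine algebra, and substituting back yields
\[ \pair{\overline{\delta_P^*\bb},\overline{\delta_P^*\bb}} = 4h\,\psi + \kappa_1 - (4h\,g_\mu(x,x)+\vareps-\eta)[0], \]
as claimed. I would then remark that the genuine Lear extension $\overline{\pair{\delta_P^*\bb,\delta_P^*\bb}}$ of Theorem \ref{leardeltasq} differs from this by the extra $\eta[0]$ coming from the height jump analysed via Theorem \ref{thm:learextensiondeligne}, which is the content of the companion Proposition \ref{maintechnical}.
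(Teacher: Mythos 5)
Your route is the same as the paper's: expand $\pair{\overline{\delta_P^*\bb},\overline{\delta_P^*\bb}}$ bilinearly from formula (\ref{LearonXbar}), use $(\oo(P)\cdot y)_0=\delta_x(y)$, $(\omega\cdot y)_0=K(y)$, $(\pi^*\pair{\oo(P),\omega}\cdot y)_0=0$ to get the baseline $4h\,\psi+\kappa_1$ together with the vertical correction $-\bigl(2r(x,K)+\sum_{y,z}r(x,y)r(x,z)L(y,z)\bigr)[0]$, and then close the computation with (\ref{discretelaplaceresistance}), Proposition \ref{tauandeta} and Proposition \ref{taualternative}; that is exactly the paper's argument and the algebra does yield $4h\,g_\mu(x,x)+\vareps-\eta$.

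The one point to correct is your diagnosis of where $\eta$ comes from. You frame the main obstacle as a ``desingularization defect'' between the weighted dual graph $G$ and the subdivided graph of the minimal desingularization, to be handled via Proposition \ref{minimaldesing}. But in this proposition $\Xbar$ is assumed smooth, so every edge of $G$ already has weight one, $\Gamma$ is $G$ with unit edge lengths, and the discrete Laplacian $L$ is literally minus the intersection matrix of $\Xbar_0$: there is no refinement and no mismatch of graphs to track (Proposition \ref{minimaldesing} only enters later, in the reduction step of Theorem \ref{leardeltasq}). The invariant $\eta$ instead arises from the comparison of the \emph{discrete} quadratic form with the \emph{full} (continuous plus discrete) harmonic analysis on $\Gamma$: since $r(x,\cdot)$ is only piecewise quadratic, identity (\ref{discretelaplaceresistance}) gives $\sum_{y,z}r(x,y)r(x,z)L(y,z)=\int_\Gamma r(x,y)\bigl(2\mu_{\can}^{\mathrm{dis}}(y)+\nu(y)\bigr)$, which by Proposition \ref{tauandeta} equals $4\tau(\Gamma)-\eta(\Gamma)$ rather than $4\tau(\Gamma)$; combined with $2r(x,K)+4\tau(\Gamma)=4h\,g_\mu(x,x)+\vareps$ from Proposition \ref{taualternative} this produces the claimed coefficient. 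So the lemmas you cite are the right ones and suffice, but the subdivision mechanism you propose to chase is not where the Foster terms enter.
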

\begin{prop} \label{maintechnical} Assume that $\pi \colon \Xbar \to \Ybar=\Delta^n$ is a nodal curve, with $\Xbar$ smooth, and with $\pi$ smooth over $Y=\Delta^*\times\Delta^{n-1}$. Assume a section $P$ of $\pi$ is given, which then necessarily passes through the smooth locus of $\pi$. Then the Lear extension of $\pair{\delta_P^*\bb,\delta_P^*\bb}$ over $\Ybar$ exists. Assume that $n=1$ and let $\eta$ denote the eta-invariant (\ref{eta}) of the polarized metrized graph associated to $\Xbar_0$. Then  the equality
\begin{equation}
\label{learextformula} \overline{\pair{\delta_P^*\bb,\delta_P^*\bb}} = \pair{\overline{\delta_P^*\bb},\overline{\delta_P^*\bb}} -\eta \, [0]  
\end{equation}
holds.
\end{prop}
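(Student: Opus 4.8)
The plan is to deduce the statement from Theorem~\ref{thm:learextensiondeligne}, applied with $L=M=\delta_P^*\bb$ carrying its canonical $C^\infty$ hermitian metric. Since Lear extendability is a local question on coordinate charts adapted to $D$, and since $D^\sing$ has codimension $\geq 2$, we may pass to such a chart and assume $\Ybar=\Delta^n$ with $D$ the divisor $t_1=0$; the hypotheses of Theorem~\ref{thm:learextensiondeligne} on $\pi\colon\Xbar\to\Delta^n$ are then in force. Take $\ell=m$ a non-zero rational section of $\delta_P^*\bb$ whose divisor has closure on $X$ avoiding $\Sigma$; after modifying $\ell,m$ by rational functions and shrinking the base, as in the proof of Theorem~\ref{thm:learextensiondeligne}, one may moreover arrange that $\overline{\divisor_X\ell}$ and $\overline{\divisor_X m}$ have disjoint support. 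Condition (a) holds by Proposition~\ref{deltalear}, which furthermore describes $\Lbar=\overline{\delta_P^*\bb}$ explicitly; in particular, viewing $\ell$ as a rational section of $\overline{\delta_P^*\bb}$, the vertical multiplicities $a(x)$ of the components $x$ of $\Xbar_0$ can be read off from that formula (they involve the effective resistances $r(x_0,x)$ on $\Gamma$, where $x_0$ is the component to which $P$ specializes, together with the globally defined contributions of $\oo(P)$, $\pi^*\psi$ and $\omega$).

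The second step is condition (b). By Proposition~\ref{normbiext}, on $X_t$ the metric on $\delta_P^*\bb$ is $\log\|\ell\|(z)=\log|f|(\delta_P(z))-2\pi(\Im\delta_P(z))^t(\Im\Omega(t))^{-1}(\Im\delta_P(z))$ for a suitable holomorphic trivialization $f$ pulled back to $\cc^h$, so that $c_1(\delta_P^*\bb)$ equals $\tfrac{-1}{\pi i}\deldelbar$ of this potential; fiberwise it is $2h\,\mu_\Ar$ by Corollary~\ref{cor:ceeone}. Near a point of $\Xbar_0\setminus\Sigma_0$ the Abel--Jacobi map $\delta_P$ extends real-analytically to the total space: this follows from Proposition~\ref{asymptAJ}, since the vanishing cycles avoid the thick part so the monodromy-logarithm term $A_jb_jz_j$ is absorbed there. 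One then checks that $2\pi(\Im\delta_P)^t(\Im\Omega)^{-1}(\Im\delta_P)$ extends smoothly over $\Sm(\pi)$ away from $P$ and over the normalization of each component of $\Xbar_0$, using that the directions in which $\Im\Omega(t)$ grows logarithmically are exactly those in which $\Im\delta_P$ stays bounded near a smooth point; this yields condition (b).

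The heart of the matter is condition (c) together with the evaluation of the constant $c$ when $n=1$. For each node $e\in\Sigma_0$, with its distinguished neighbourhood $U_{e,\eps}$ on which $uv=t$, one must analyze
\[ \gamma_e(t)=\int_{X_t\cap U_{e,\eps}}\chi\,c_1(\delta_P^*\bb)\,,\qquad \chi=\log\|\ell\|-a(x)\log|u|-a(y)\log|v|\,, \]
where $x,y$ are the two components of $\Xbar_0$ through $e$. The annulus $X_t\cap U_{e,\eps}=\{uv=t\}$ is coordinatized by $\log|u|$, which ranges over an interval of length $\sim|\log|t||$, i.e.\ by a normalized coordinate $\sigma\in[0,1]$ along the edge $e$. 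Moving around the annulus translates $\delta_P$ by the period attached to the vanishing cycle at $e$, and Propositions~\ref{asymptAJ} and~\ref{asymptperiod} give that along the annulus both $\Im\delta_P$ and $\Im\Omega(t)$ grow affinely in $\log|u|$ and $\log|t|$ with coefficients governed by the matrix $A_e$ attached to $e$. Substituting these expansions, the potential $2\pi(\Im\delta_P)^t(\Im\Omega)^{-1}(\Im\delta_P)$ becomes a quadratic polynomial in $\sigma$ times $|\log|t||$, while the correction $-a(x)\log|u|-a(y)\log|v|$ is affine in $\sigma$ times $|\log|t||$; since $\chi$ is bounded at the two endpoints (where one approaches $y$, resp.\ $x$, away from $e$), the quadratic part of $\chi$ vanishes at $\sigma=0,1$, hence is a constant multiple of $\sigma(1-\sigma)$, the constant being $F(e)\ell(e)$. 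Integrating this against the restriction of $c_1(\delta_P^*\bb)=2h\,\mu_\Ar$ to the annulus --- whose limiting total mass there is $2F(e)$, distributed according to normalized Lebesgue measure along the edge (cf.\ the continuous part $\mu_\can^{\mathrm{cts}}$ of Zhang's canonical measure and the Foster coefficient~(\ref{defFoster})) --- reduces $\gamma_e(t)$ to the elementary integral $\int_0^1\sigma(1-\sigma)\,d\sigma=\tfrac16$, with the upshot $\gamma_e(t)=-\eta(\Gamma,e)\log|t|+O(1)$ where $\eta(\Gamma,e)=\tfrac13F(e)^2\ell(e)$ as in~(\ref{eta}). Summing over $e\in\Sigma_0$ shows that $\gamma=\sum_e\gamma_e$ has a log singularity along $D$ of order $-\eta(\Gamma)$, which is condition (c); Theorem~\ref{thm:learextensiondeligne} then gives the existence of $\overline{\pair{\delta_P^*\bb,\delta_P^*\bb}}$ in general, and for $n=1$ the equality~(\ref{learextformula}) with $c=-\eta$. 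The main obstacle is precisely this last computation: tracking how the ``height jump'' of the quadratic-form potential of $\delta_P^*\bb$ around a node interacts with the Arakelov mass that vanishes near that node, and verifying that the cross terms affine in $\sigma$ (and the $\log|f|$ term) integrate to a bounded quantity, so that only the $\tfrac13F(e)^2\ell(e)$ contribution survives.
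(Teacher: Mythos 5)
Your overall strategy coincides with the paper's: apply Theorem~\ref{thm:learextensiondeligne} with $L=M=\delta_P^*\bb$, get condition (a) from Proposition~\ref{deltalear}, verify (b) from the nilpotent-orbit expansions, and reduce (c) to a collar computation at each node whose leading term is $\tfrac13 F(e)^2\ell(e)\,|\log|t||$. The existence part and the shape of the final computation are right. But there is a genuine gap at the decisive step. Writing, as in the paper, $\log\|s\|$ on the node neighbourhood as $d_1\log|u|+d_2\log|v|+d_3\,\tfrac{\log|u|\log|v|}{\log|t|}$ plus bounded terms, your ``constant of proportionality $F(e)\ell(e)$'' for the quadratic part of $\chi$ and your ``limiting collar mass $2F(e)$ distributed uniformly along the edge'' are both exactly the unknown coefficient $d_3$ (up to sign and normalization). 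Asserting either one \emph{is} the content of the hard step; asserting both and multiplying them is assuming the crux twice. Propositions~\ref{asymptperiod}(a) and~\ref{asymptAJ}(a) only tell you that $\delta_P=Ab_1\tfrac{\log u}{2\pi i}+Ab_2\tfrac{\log v}{2\pi i}+\alpha$ for \emph{some} $b_1,b_2\in\qq^h$; they do not identify these vectors, and hence do not give $d_3=-F(e)$. The paper's proof spends its main effort precisely here: it passes to $\Zbar=\Xbar\times_\Delta\Xbar$, separates the diagonal from the section via Knudsen's stabilization, computes the dual graph of the modified family and the two-variable effective resistance $r(x,y)(a,b)=r(x,z)a+r(x,w)b+F(e)\tfrac{ab}{a+b}$, and then pins down $d_1,d_2,d_3$ by restricting to test curves $q\mapsto(q^m,q^n)$ and invoking Proposition~\ref{Leardelta}, obtaining $d_1m+d_2n+d_3\tfrac{mn}{m+n}=-r(x,y)(m,n)$ for all $m,n$ and hence $d_3=-F(e)$. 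Nothing in your proposal replaces this identification (nor do you cite an external result on convergence of the Arakelov measures to Zhang's measure that could substitute for it; in the paper that convergence is a \emph{consequence} of $d_3=-F(e)$, not an input).

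A secondary issue: in your verification of (b) you claim that $\delta_P$ extends real-analytically near smooth points of $\Xbar_0$ because the monodromy-logarithm term is ``absorbed'', and that $\Im\delta_P$ stays bounded in the directions where $\Im\Omega$ blows up. Neither is true in general: $\Im\delta_P$ grows like $Ab\log|t_1|$, i.e.\ exactly in the growing directions. What actually happens (and what the paper proves via the Schur-complement expansion of $(\Im\Omega)^{-1}$) is that this growth contributes only a term linear in $\log|t_1|$ to the potential, which is removed in $\chi$, and the limiting fiberwise form is then identified with $2q(x)$ times the Arakelov form of the normalization using the limit mixed Hodge structure statements, Propositions~\ref{asymptperiod}(b) and~\ref{asymptAJ}(b), together with Corollary~\ref{cor:ceeone}. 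Your write-up of (b) should be repaired along these lines, but the essential missing ingredient remains the determination of $d_3$.
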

\begin{proof}[Proof of Proposition \ref{selfdelignepairingLear}]  From Proposition \ref{deltalear} we obtain that $\overline{\delta_P^*\bb}$ exists and that
\begin{equation} \label{zero} \overline{\delta_P^*\bb} = 2\,\oo(P) + \pi^* \pair{\oo(P),\omega} + \omega  - \sum_{y \in \V(G)} r(x,y)y \, . 
\end{equation} 
As $\Xbar$ is smooth, all edges in $G$ have weight one, and the (standard matrix of the) discrete Laplacian $L \colon \rr^{\V(G)} \to \rr^{\V(G)}$ is equal to minus the intersection matrix of $\Xbar_0$. We then obtain by direct calculation from (\ref{zero}) that
\begin{equation} \label{selfint} \pair{\overline{\delta_P^*\bb}, \overline{\delta_P^*\bb}   }
= 4h\, \psi + \kappa_1 - (2 \,r(x,K)+ \sum_{y,z \in \V(G)} r(x,y)r(x,z)L(y,z) )[0] \, . 
\end{equation}
Denote by $\psi \colon \rr^{\V(G)} \to C(\Gamma)^*$ the canonical map. Let $\nu$ denote the discrete measure supported on $V$ with mass $\nu(v)=\sum_{e \in E(v)} F(e)$ at the vertex $v$. Then by (\ref{discretelaplaceresistance}) we have
\[ \psi(\sum_{z \in \V(G)} r(x,z)L(y,z)) = 2\,(\mu_{\can}^{\mathrm{dis}}(y) - \delta_x(y)) + \nu(y) \, .  \]
It follows that
\[ \begin{split} \sum_{y,z \in \V(G)} r(x,y)r(x,z)L(y,z) & = \int_\Gamma r(x,y) (2\,( \mu_{\can}^{\mathrm{dis}}(y) - \delta_x(y) )+\nu(y)) \\
& =  \int_\Gamma r(x,y) (2\,\mu_{\can}^{\mathrm{dis}}(y) + \nu(y)) \\
 & = 4\,\tau(\Gamma)-4\,\tau^{\mathrm{cts}}(\Gamma,x) + \int_\Gamma r(x,y)\nu(y)   \, . \end{split} \]
Invoking Proposition \ref{tauandeta} we deduce from this that
\begin{equation} \label{one} \sum_{y,z \in \V(G)} r(x,y)r(x,z)L(y,z) = 4\,\tau(\Gamma)-\eta(\Gamma) \, .   
\end{equation}
By Proposition \ref{taualternative} we have
\begin{equation} \label{two} 2 \,r(x,K)  + 4\,\tau(\Gamma) = 
 4h \, g_\mu(x,x)+\varepsilon(\Gamma,K)  \, . 
\end{equation}
Combining (\ref{one}) and (\ref{two}) we find
\[ 2\,r(x,K) + \sum_{y,z \in \V(G)} r(x,y)r(x,z)L(y,z)
= 4h\,g_\mu(x,x) + \varepsilon(\Gamma,K) -\eta(\Gamma) \, .  \]
We obtain the proposition by inserting this into (\ref{selfint}).
\end{proof}
Our proof of Proposition \ref{maintechnical} consists in an application of Theorem \ref{thm:learextensiondeligne} above, which gives a general criterion for the existence of a Lear extension of a given Deligne pairing, as well as an explicit formula for it once it exists. In particular we will need to control the behavior of the log norm (the archimedean height) of a section of $\delta_P^*\bb$ on coordinate neighborhoods $U_{e,\eps}$ as in the Theorem. 

From the formula in Proposition \ref{normbiext} we see that it suffices to control the maps $\Im \Omega$ and $\Im \delta_P$ on $U_{e,\eps}$. We will use the general expansions of the period and Abel-Jacobi map in local coordinates from Section \ref{asympresults}. Expansions for $\Im \Omega$ and $\Im \delta_P$ follow readily, and we combine them carefully to obtain suitable expansions for the log norm of a section of $\delta_P^*\bb$.

A general approach to local expansions in several variables for the archimedean height is explained in \cite{hp}. It is based on the several variables $SL_2$-orbit Theorem from \cite{knu} \cite{pearldiff}. We also mention \cite{abbf} and \cite{bhdj}, where similarly the asymptotic behavior of the norm on the Poincar\'e bundle is studied, over a several variables parameter space. 
\begin{proof}[Proof of Proposition \ref{maintechnical}]
We write $\vv=\R^1 \pi_* \zz_X(1)$, viewed as a variation of polarized Hodge structures over $Y$ with polarization given by the intersection product. Let $h$ be the genus of the general fiber of $\pi$ and denote by $U_h$ the Siegel upper half-space in degree $h$. Let $\Omega \colon Y \to \Gamma \setminus U_h$ with $\Gamma \cong \zz$ be the associated period map obtained by taking normalized period matrices on a symplectic frame of $\vv$.

Let $\Sigma \subset \Xbar$ be the singular locus of $\pi$ and put $\Sigma_0=\Sigma \cap \Xbar_0$. By \cite[Proposition 9.1.11]{liu} we may choose a non-zero rational section $s$ of $\delta_P^*\bb$ such that $\overline{\divisor_X  s}$ is disjoint from $\Sigma$.  We will apply Theorem \ref{thm:learextensiondeligne} with both $L$, $M$ equal to $\delta_P^*\bb$ and with both $\ell$, $m$ equal to $s$. 
Thus, let $\eps \in \rr_{>0}$ and let $\mathcal{U}=\{U_{e,\eps} \}_{e \in R}$ be a distinguished collection of open coordinate neighborhoods with centers $e \in \Xbar_0$ associated to $\eps$. By our assumption that $\overline{\divisor_X  s}$ is disjoint from $\Sigma$ we can assume that the $U_{e,\eps}$ are small enough so that $s$ is generating over $U_{e,\eps} \setminus \pi^{-1}D$ for all $e \in \Sigma_0$.

Our task is to verify that for sufficiently small $\eps$ each of the three conditions (a)--(c) of Theorem \ref{thm:learextensiondeligne} is satisfied. As to condition (a), the existence of a Lear extension $\overline{\delta_P^*\bb}$ of $\delta_P^*\bb$ follows directly from Proposition \ref{deltalear}. 

As to condition (b), let $R$ be the set of $e \in \Xbar_0$ such that $e$ occurs as a center of one of the given open neighborhoods associated to $\eps$. 
We first work locally on a distinguished open neighborhood $U_{e,\eps}$ with a center $e \in R \setminus \Sigma_0$. Then over $U_{e,\eps}\setminus \pi^{-1}D$ the Poincar\'e-Lelong formula gives us
\begin{equation} \label{lelong} \ceeone(\delta_P^*\bb) = \frac{1}{\pi i} \deldelbar \log \|s\| + \delta_{\overline{\divisor_X s}} \, . 
\end{equation} 
In order to study $\ceeone(\delta_P^*\bb)$ and its behavior near $\pi^{-1}D$ we develop $\log\|s\|$ in the coordinates $(z,t_1,\ldots,t_n)$ of $U_{e,\eps}$. Following Proposition \ref{normbiext}, this requires developing both $(\Im \Omega)^{-1}$ and $\Im \delta_P$ in these coordinates. We will turn to this task first. 

Proposition \ref{asymptperiod}(a) yields an integer $r \in \zz_{\geq 0}$ as well as a matrix  
\begin{equation*} A = \begin{pmatrix}[c|c]  A' & 0 \\ \hline 0 & 0  \end{pmatrix} \in M(h \times h, \zz) \end{equation*}
with $A'\in M(r \times r,\zz)$ symmetric positive definite together with a holomorphic function $\psi \colon \Delta_\eps^n \to S(h \times h,\cc)$ such that
\[ \Omega(t) = \frac{A}{2\pi i} \log t_1 + \psi(t)  \]
for all $t \in \Delta_\eps^n \setminus D$.
This gives for the imaginary part of the period matrix that
\begin{equation} \label{periods} 
-2\pi\,\Im \Omega(t) = A \log|t_1| + B(t)  
\end{equation}
away from $D$, where $B=-2\pi\Im \psi$. The function $B(t)$ is bounded and continuous on $\Delta_\eps^n$.
Writing
\[ B =  \begin{pmatrix}[c|c] B_{11}& B_{12}\\
\hline 
B_{21} & B_{22}\\
\end{pmatrix} \, ,  \]
with $B_{11} \in M(r \times r,\rr)$, the positive definiteness of $\Im \Omega(t)$ for each $t \in \Delta_\eps^n \setminus D$ implies that
the matrices $B_{22}(t)$ are invertible away from $D$. Moreover, we have that the Schur complements 
\[ Q= A'\log|t_1| + B_{11}-B_{12}B_{22}^{-1}B_{21}  \]
of $B_{22}$ in $-2\pi\Im \Omega$ are invertible away from $D$ as well.  
The inverse of $A\log|t_1|+B=-2\pi\,\Im \Omega$ can be written as
\begin{equation} \label{inverse} (A \log|t_1| + B)^{-1} = \begin{pmatrix}[c|c] Q^{-1} & -Q^{-1}B_{12}B_{22}^{-1} \\
\hline 
-B_{22}^{-1}B_{21}Q^{-1} & B_{22}^{-1} + B_{22}^{-1}B_{21}Q^{-1}B_{12}B_{22}^{-1} \\
\end{pmatrix} \, .  
\end{equation}
Making $\eps$ smaller if necessary, we can write, using the boundedness of $B(t)$,
\begin{equation} \label{Qmatrix} Q^{-1} = \frac{1}{\log|t_1|}A'^{-1} + \frac{1}{(\log|t_1|)^2}\delta(t) 
\end{equation} 
with $\delta \colon \Delta_\eps^n \to M(r \times r, \rr)$ a bounded continuous function. Formulae (\ref{inverse}) and (\ref{Qmatrix}) taken together yield an expansion of $(\Im \Omega)^{-1}$ that will be sufficiently precise for our purposes.

We next turn our attention to $\Im \delta_P$. By Proposition \ref{asymptAJ}(a) we have
\[ \delta_P = Ab \frac{\log t_1}{2\pi i} + \alpha \]
for some holomorphic function $\alpha \colon U_{e,\eps} \to \cc^h$ and $b \in \qq^h$. It follows that 
\[ -2\pi \Im \delta_P = Ab \log|t_1| + a  \]
where $a = -2\pi \Im \alpha$.

By Proposition \ref{normbiext} we have
\begin{equation} \label{lognormsection} \begin{split}
\log\|s\| & = \log|f|-2\pi (\Im \delta_P)^t (\Im \Omega)^{-1} (\Im \delta_P) \\
&= \log|f| + (Ab\log|t_1|+a)^t(A \log|t_1|+B)^{-1}(Ab\log|t_1|+a) 
\end{split} 
\end{equation}
for some meromorphic $f$ on $U_{e,\eps}$. In order to expand (\ref{lognormsection}) we write
\[ b=  {\genfrac{(}{)}{0pt}{}{b_1}{b_2}}  \, , \quad
a= {\genfrac{(}{)}{0pt}{}{a_1}{a_2}} \]
with $b_1 \in \qq^r$, $b_2 \in \qq^{h-r}$, $a_1 \colon U_{e,\eps} \to \rr^r$, $a_2 \colon U_{e,\eps} \to \rr^{h-r}$.  
Using equations (\ref{inverse}) and (\ref{Qmatrix}) we find that there exist unique bounded continuous functions $\sigma, \gamma \colon U_{e,\eps} \to \rr$  such that the equality
\[ \log \|s\| = \log|f|+b_1^tA'b_1 \log|t_1| + \sigma + \frac{1}{\log|t_1|} \gamma  \]
holds on $U_{e,\eps}$. An explicit calculation yields that 
\begin{equation} \label{expandsigma} \sigma = a_2^t B_{22}^{-1} a_2 + 2\, a_1^tb_1-2\, a_2^tB_{12}B_{22}^{-1}b_1 + b_1^tA'\delta A' b_1  \, . 
\end{equation}

For $(1,1)$-forms $\omega$, $\omega'$ on $U_{e,\eps} \setminus \pi^{-1}D$ we write $\omega \sim_\pi \omega'$ if, upon writing in local coordinates $(z,t_1,\ldots,t_n)$ 
\[ \omega - \omega' = g\, \d z \, \d \overline{z} + \sum_{i=1}^n h_i \d z \, \d \overline{t}_i + \sum_{i=1}^n k_i \, \d \overline{z} \, \d t_i + \sum_{i,j=1}^n l_{ij} \, \d t_i \, \d \overline{t}_j \, , \]
we have $g=0$. If $\omega \sim_\pi \omega'$ then for all $t \in \Delta_\eps^n\setminus D$ the restrictions of $\omega$ and $\omega'$ to $X_t \cap U_{e,\eps}$ are equal. 

Combining (\ref{lelong}) and (\ref{lognormsection}) and using that
\[ \frac{1}{\pi i}\deldelbar \log|f| + \delta_{\overline{\divisor_X s}} = 0  \quad \textrm{and} \quad \frac{1}{\pi i}\deldelbar \log|t_1|=0 \]
on $U_{e,\eps} \setminus \pi^{-1}D$ we find that on $U_{e,\eps} \setminus \pi^{-1}D$ the equivalence
\[ \ceeone(\delta_P^*\bb)
\sim_\pi \frac{1}{\pi i} \deldelbar \sigma + \frac{1}{\log|t_1|} \frac{1}{\pi i} \deldelbar \gamma  \]
holds. As $\alpha$ is holomorphic, the entries of $\Im \alpha$ and hence of $a_1, a_2$ are annihilated by $\deldelbar$. With this it follows from (\ref{expandsigma}) that  
\[  \deldelbar \, \sigma \sim_\pi \deldelbar \, a_2^t B_{22}^{-1} a_2 \, . \] 
We conclude that the equivalence
\[ \ceeone(\delta_P^*\bb) \sim_\pi \frac{1}{\pi i} \deldelbar  \,
a_2^t B_{22}^{-1} a_2 + \frac{1}{\log|t_1|} \frac{1}{\pi i} \deldelbar \, \gamma \] 
holds away from $\pi^{-1}D$. Now the term $(1/\log|t_1|)\frac{1}{\pi i} \deldelbar \gamma$ converges uniformly to zero as $t_1 \to 0$. Let $p=(0,t_2,\ldots,t_n) \in D_\eps = \Delta_\eps^n \cap D$. 
  We find that for $t \to p$ the family of smooth $(1,1)$-forms $c_1(\delta_P^*\bb)_t$ restricted to $U_{e,\eps}\setminus \pi^{-1}D$ has a well-defined limit on $\Xbar_p \cap U_{e,\eps}$, namely the smooth $(1,1)$-form
\begin{equation} \label{limitform} \begin{split} \frac{1}{\pi i} \deldelbar_z  & \,a_2(z,p)^t B_{22}^{-1}(p) a_2(z,p) \\ & = \frac{-1}{\pi i} \deldelbar_z \, 2\pi (\Im \alpha_2)(z,p) (\Im \psi_{22})^{-1}(p) (\Im \alpha_2)(z,p)
\, . \end{split}
\end{equation}

Let $G$ be the dual graph of the fiber $F=\Xbar_p$ of $\pi$ above $p$. 
Let $x \in V(G)$ be the unique irreducible component of $\Xbar_0$ that intersects $U_{e,\eps}$ non-trivially, and let $q(x)$ be the genus of the normalization of $x$. We claim that (\ref{limitform}) coincides with the restriction to $x \cap U_{e,\eps}$ of $2\,q(x)$ times the Arakelov volume form (\ref{defArakvolume}) of the normalization of $x$. Varying $e$ through $\R \setminus \Sigma_0$ and varying $p$ through $D_\eps$ this claim then shows in particular that condition (b) is verified: let $F$ be any fiber of the projection $\pi^{-1}D \to D$. Then the first Chern current $\ceeone(\overline{\delta_P^*\bb})$ of $\overline{\delta_P^*\bb}$ on $\mathrm{Sm}(\pi)$ restricts as a smooth $(1,1)$-form on $F \setminus F^{\mathrm{sing}} \subset \mathrm{Sm}(\pi)$, and the smooth $(1,1)$-form $\ceeone(\overline{\delta_P^*\bb})|_{F \setminus F^{\mathrm{sing}}}$ extends as a smooth $(1,1)$-form over the normalization of $F$.

In order to prove our claim, we invoke Propositions \ref{asymptperiod}(b) and \ref{asymptAJ}(b). From Proposition \ref{asymptperiod}(b) we obtain that $\psi_{22}(p)$ is the period matrix of the jacobian $\Jac(\tilde{\Xbar}_p)$ of the normalization $\tilde{\Xbar}_p$ of $\Xbar_p$. Proposition \ref{asymptAJ}(b) then yields that $\alpha_2(z,p) \in \cc^{h-r}$ is a lift of the Abel-Jacobi image $\int_P^z$ in $\Jac(\tilde{\Xbar}_p)$. Note that (\ref{limitform}) remains unchanged upon replacing the section $P$ by any other section. Hence we may assume without loss of generality that the section $P$ passes through $x$. In this case our claim follows immediately from Corollary \ref{cor:ceeone}. 

Finally we consider condition (c) of Theorem \ref{thm:learextensiondeligne}. We turn our attention to a coordinate neighborhood $U_{e,\eps}$ with center $e \in \Sigma_0$. Our task is then to show that the function
\[ \Delta_\eps^n \setminus D \to \rr \, , \quad t \mapsto \int_{X_t \cap U_{e,\eps}} \chi\,\ceeone(\delta^*_P\bb) \]
has a log singularity along $D$, where $\chi$ denotes the function $\log\|s\|$ without its linear part determined by the Lear extension of $\delta_P^*\bb$ over $U_{e,\eps}\setminus \pi^{-1}D$. We recall that $\eps$ is chosen small enough so that $U_{e,\eps}$ is disjoint from $\overline{\divisor_X s}$. Also we recall that on $U_{e,\eps}$ we have coordinates $(u,v,t_2,\ldots,t_n)$, with equation $uv=t_1$, and projection $\pi $ given by $(u,v,t_2,\ldots,t_n)\mapsto (uv,t_2,\ldots,t_n)$. We will need to develop $\log \|s\|$ on $U_{e,\eps} \setminus \pi^{-1}D$ in these coordinates. As before, this requires developing $(\Im \Omega)^{-1}$ and $\Im \delta_P$ in these coordinates. For $(\Im \Omega)^{-1}$ there is nothing new, and we can work with the conjunction of formulae (\ref{inverse}) and (\ref{Qmatrix}) above (where now $t_1=uv$).

The situation is different for $\Im \delta_P$. By Proposition \ref{asymptAJ}(a) there exist vectors $b_1, b_2 \in \qq^h$ and a holomorphic map $\alpha \colon U_{e,\eps} \to \cc^h$ such that
\[ \delta_P = Ab_1 \frac{\log u}{2\pi i} + Ab_2 \frac{\log v}{2\pi i} + \alpha \]
on $U_{e,\eps} \setminus \pi^{-1}D$. This gives the identity
\[ -2\pi\, \Im \delta_P = Ab_1 \log|u| + Ab_2 \log|v| + a \]
with $a = -2\pi \Im \alpha$. Combining with Proposition \ref{normbiext} we find 
\[ \begin{split} & \log \|s\| = \log|f| \\ & + \left( Ab_1 \log|u| + Ab_2 \log|v| + a \right)^t
\left( A \log|t_1| + B \right)^{-1} \left( Ab_1 \log|u| + Ab_2 \log|v| + a \right) \end{split} \]
on $U_{e,\eps}\setminus \pi^{-1}D$, where $f$ is a meromorphic function on $U_{e,\eps}$. As by assumption $\overline{\divisor_X s}$ lies away from $U_{e,\eps}$ we can write $\log|f|=c_1\log|u|+c_2\log|v|+\log|\nu|$ with $c_1,c_2 \in \zz$ and with $\nu$ a holomorphic unit on $U_{e,\eps}$. Using (\ref{inverse}) and (\ref{Qmatrix}) and the relation $\log|v|=\log|t_1|-\log|u|$ we find unique bounded continuous functions $\sigma$, $\gamma_1,\gamma_2,\gamma_3$ on $U_{e,\eps}$ and $d_1, d_2, d_3 \in \qq$ such that 
\begin{equation} \label{expandlogs} \begin{split} \log\|s\| =  
 d_1 & \log|u| + d_2 \log|v| + d_3 \frac{ \log|u|\log|v|}{\log|t_1|} \\ & + \log|\nu|+\sigma + \frac{1}{\log|t_1|}\gamma_1+  
\frac{\log|u|}{\log|t_1|} \gamma_2 + \left(\frac{\log|u|}{\log|t_1|}\right)^2 \gamma_3  \end{split} 
\end{equation}
on $U_{e,\eps} \setminus \pi^{-1}D$. 

We put
\begin{equation} \label{definerho}
\rho(u,v) = d_1 \log|u| + d_2 \log|v| + d_3 \frac{ \log|u|\log|v|}{\log|t_1|}  \, , 
\end{equation}
and
\begin{equation} \label{expandtau} \tau(u,v,t_2,\ldots,t_n) = \log|\nu|+\sigma + \frac{1}{\log|t_1|}\gamma_1+  
\frac{\log|u|}{\log|t_1|} \gamma_2 + \left(\frac{\log|u|}{\log|t_1|}\right)^2 \gamma_3  \, . 
\end{equation}
We note that, away from $\pi^{-1}D$,
\begin{equation} \label{deldelbarrho} \begin{split} 
\frac{\deldelbar}{\pi i} \rho(u,v) & =
\frac{\deldelbar}{\pi i} \, d_3 \frac{\log|u|(\log|t_1|-\log|u|)}{\log|t_1|} 
\\ &= -\frac{\deldelbar}{\pi i} \, d_3 \frac{(\log|u|)^2}{\log|t_1|} \\
& \sim_\pi -d_3 \frac{1}{\log|t_1|} \frac{1}{2\pi i} \frac{1}{|u|^2} \d u \, \d  \overline{u}
\, . \end{split}    
\end{equation}
In polar coordinates $u=re^{i\theta}$ this reads 
\begin{equation} \label{polar} \frac{\deldelbar}{\pi i} \rho(u,v) \sim_\pi \frac{2d_3}{\log|t_1|} \frac{1}{2\pi} \d \theta \, \d \log r \, . 
\end{equation} 
We find
\begin{equation} \label{c1explicit} \ceeone(\delta_P^*\bb) = \frac{1}{\pi i} \deldelbar \log \|s\| \sim_\pi \frac{2d_3}{\log|t_1|} \frac{1}{2\pi} \d \theta \, \d \log r + 
\frac{1}{\pi i} \deldelbar \tau 
\end{equation}
on $U_{e,\eps} \setminus \pi^{-1}D$. 

Further we note that
\[ \chi = d_3 \frac{ \log|u|\log|v|}{\log|t_1|} + \tau(u,v,t_2,\ldots,t_n) \, . \]
Our task is thus to show that 
\[ \int_{X_t \cap U_{e,\eps}} \left( d_3 \frac{ \log|u|\log|v|}{\log|t_1|} + \tau  \right)
\left( \frac{2d_3}{\log|t_1|} \frac{1}{2\pi} \d \theta \, \d \log r + 
\frac{1}{\pi i} \deldelbar \tau \right) \]
has a log singularity along $D$. First we note that $\tau$ extends as a bounded continuous function over $U_{e,\eps} \setminus \{(0,0)\}$. 
Moreover, if we write
\[ b_i= {\genfrac{(}{)}{0pt}{}{b_{i1}}{b_{i2}}} \, , \quad a =  
{\genfrac{(}{)}{0pt}{}{a_1}{a_2}} \]
for $i=1, 2$, with $b_{i1} \in \qq^r$, $b_{i2} \in \qq^{h-r}$, $a_1 \colon U_{e,\eps} \to \rr^r$, $a_2 \colon U_{e,\eps} \to \rr^{h-r}$, then explicitly we have 
\[ \sigma = a_2^t B_{22}^{-1} a_2 + 2a_1^t b_{21} -2a_2^t B_{12}B_{22}^{-1} b_{21} + b_{21}^tA'\delta A' b_{21} \, . \]
Similarly to the case $e \notin \Sigma_0$ considered above, we have
\[  \deldelbar \, \sigma \, \sim_\pi  \deldelbar \, a_2^tB_{22}^{-1} a_2 \, . \]
We then obtain from (\ref{expandtau}) that, up to the addition of terms that converge uniformly to zero as $t_1 \to 0$, the form $\frac{1}{\pi i}\deldelbar \tau$ extends as a bounded continuous current over $U_{e,\eps}$. 
A term-by-term analysis using (\ref{expandtau}) reveals that each of the three fiber integrals
\[ \int_{X_t \cap U_{e,\eps}}  d_3 \frac{ \log|u|\log|v|}{\log|t_1|} \frac{1}{\pi i} \deldelbar \tau \, , \quad 
\int_{X_t \cap U_{e,\eps}} \tau \frac{2d_3}{\log|t_1|} \frac{1}{2\pi} \d \theta \, \d \log r  \, , \quad \int_{X_t \cap U_{e,\eps}} \tau \frac{1}{\pi i} \deldelbar \tau \]
extends as a continuous function over $\Delta_\eps^n$.

We are thus left to show that
\[ \int_{X_t \cap U_{e,\eps}}  d_3 \frac{\log|u|\log|v|}{\log|t_1|}   \frac{2d_3}{\log|t_1|} \frac{1}{2\pi} \d \theta \, \d \log r 
 \]
acquires a log singularity along $D$. Note that the fiber integral, viewed as a function on $\Delta_\eps^n \setminus D$, depends only on the $t_1$-coordinate, so that we can proceed directly to the case where $n=1$, where we write $t_1=t$. The whole proposition is proved once we have shown that with $n=1$, the asymptotic estimate
\begin{equation} \label{aimedfor}
\int_{X_t \cap U_{e,\eps}}  d_3 \frac{\log|u|\log|v|}{\log|t_1|}   \frac{2d_3}{\log|t_1|} \frac{1}{2\pi} \d \theta \, \d \log r \sim  -\eta(\Gamma,e)\log|t| 
\end{equation}
holds as $t \to 0$. Indeed, then condition (c) from Theorem \ref{thm:learextensiondeligne} is verified, and moreover we find that the order of the log singularity of the function
\[ \sum_{e \in \Sigma_0}  \int_{X_t \cap U_{e,\eps}} \chi \, \ceeone(\delta_P^*\bb) \]
along the origin of $\Delta_\eps$ is equal to $ - \sum_{e \in \Sigma_0} \eta(\Gamma,e) = -\eta(\Gamma)$.
This gives formula (\ref{learextformula}) by the second part of Theorem \ref{thm:learextensiondeligne}.

Assume from now on therefore that $n=1$. We explicitly compute
\begin{equation} \label{elementary} \begin{split} \int_{X_t \cap U_{e,\eps}} \frac{\log|u|\log|v|}{\log|t|}   & \frac{2}{\log|t|} \frac{1}{2\pi} \d \theta \, \d \log r \\
& = \frac{2}{(\log|t|)^2} \int_{r=|t|\epsilon^{-1/2}}^{r=\epsilon^{1/2}} \log r (\log|t|-\log r) \,\d \log r \\
& = 2 \left( -\frac{1}{12}\frac{ (\log \epsilon)^3}{(\log|t|)^2} + \frac{1}{4}\frac{ (\log \epsilon)^2}{\log|t|} - \frac{1}{6}\log|t| \right)\\
& \sim -\frac{1}{3}\log|t| \, . 
\end{split} 
\end{equation}
Let $G$ be the labelled dual graph of the central fiber $\Xbar_0$ of $\Xbar \to \Delta_\eps$ (cf. Section \ref{prelimsemistable}). Recall that we are assuming that $\Xbar$ is smooth; this implies that each edge $e \in \E(G)$ has label $t$, and hence the corresponding edge of $\Gamma$ is isometric to the unit interval.
In particular we find that $\eta(\Gamma,e)=\frac{1}{3}F(e)^2$. By (\ref{elementary}) we find (\ref{aimedfor}) once we prove that $d_3^2=F(e)^2$. This will be our objective in the remainder of the proof.

 Let $\Zbar=\Xbar \times_{\Delta_\eps} \Xbar$. The first projection $\pi_1 \colon \Zbar \to \Xbar$ has two tautological sections, one called $\tilde{P}$ induced by the given section $P$, and the diagonal section $Q$. The section $\tilde{P}$ induced by $P$ passes through the smooth locus of $\pi_1$. However, for each double point $e \in \Sigma_0$ the image $(e,e)$ under the section $Q$ lies in the singular locus of $\pi_1$. This prevents us from an immediate application of Proposition \ref{Leardelta}, and we need a modification of $\Zbar$ to separate the two sections.

Knudsen's specialization theorem \cite[Theorem~2.4]{kn} gives a modification $\beta \colon \Zbar' \to \Zbar$ which is a contraction, and moreover admits liftings $\tilde{P}'$, $Q'$ of the sections $\tilde{P}$, $Q$ such that $\tilde{P}'$ and $Q'$ are disjoint, and pass through the smooth locus of the composed map $\xi=\pi_1 \beta$. Let $e \in \Sigma_0$. We will need to know the precise structure of the labelled dual graph of the fiber of $\xi$ at $e$.  Let $G_e$ be the labelled dual graph of the fiber of $\pi_1$ above $e$. Note that $G_e$ has the same underlying graph as $G$, but has the label $t$ on each edge of $G$ replaced by the monomial $uv$. We let $w, z$ be the endpoints of the tautological edge $e$ of $G_e$; say that the branch of $\Xbar_0$ through $e$ given by $u=0$ corresponds to the endpoint $w$ of $e$, and the branch given by $v=0$ corresponds to the endpoint $z$ of $e$.  

Let $\tilde{G}_e$ denote the labelled dual graph of $\xi$ over $e$. The proof of Theorem 2.4 in \cite{kn} shows that $\beta$ only modifies $\Zbar$ over points where $Q$ meets the singular locus of $\pi_1$ (called ``Case I'' in \cite{kn}) or over points where $\tilde{P}$ and $Q$ meet (``Case II''). In the fiber of $\Zbar$ over $e$ ``Case I'' applies, the modified point being $(e,e)$. The explicit calculation in ``Case I'' of the proof of Theorem~2.4 in \cite{kn} shows then that the inverse image of $(e,e)$ under $\beta$ is a rational component $y$ of the fiber of $\xi$ above $e$, and that $\tilde{G}_e$ is obtained from $G_e$ by replacing the tautological edge $e$ (carrying label $uv$) of $G_e$ by two edges, one, which we call $e_w$, with label $u$ attached to it, and with endpoints $w$ and $y$; and one, which we call $e_z$, with label $v$ attached to it, and with endpoints $y$ and $z$. 

In graphical terms, in order to obtain $\tilde{G}_e$ from $G_e$ one has to replace the piece
\begin{center}
\begin{tikzpicture}
\draw node[circle, draw, fill=black, 
inner sep=0pt, minimum width=4pt]{} (0,0) -- (3,0) node[circle, draw, fill=black, inner sep=0pt, minimum width=4pt]{};
\draw (-0.5,1) -- (0,0);
\draw (-1,-0.5) -- (0,0);
\draw (-1,0.3) -- (0,0);
\node [below] at (0,0) {$w$};
\node [below] at (2,0) {$uv$};
\node [above] at (1,0) {$e$};
\node [below] at (3,0) {$z$};
\node [left] at (-0.25,0.5) {$uv$};
\node [above] at (-0.8,-0.4) {$uv$};
\node [above] at (3.5,0.25) {$uv$};
\node [above] at (4,-0.5) {$uv$};
\draw (3,0) -- (4,-0.5);
\draw (3,0) -- (4,0.5);
\end{tikzpicture}
\end{center}
of $G_e$ by the piece 
\begin{center}
\begin{tikzpicture}
\draw node[circle, draw, fill=black, 
inner sep=0pt, minimum width=4pt]{} (0,0) -- (1.5,0) node[circle, draw, fill=black, inner sep=0pt, minimum width=4pt]{};
\draw (1.5,0) -- (3,0) node[circle, draw, fill=black, inner sep=0pt, minimum width=4pt]{};
\draw (-0.5,1) -- (0,0);
\draw (-1,-0.5) -- (0,0);
\draw (-1,0.3) -- (0,0);
\node [below] at (0,0) {$w$};
\node [below] at (0.75,0) {$u$};
\node [below] at (2.25,0) {$v$};
\node [below] at (1.5,0) {$y$};
\node [above] at (0.5,0) {$e_w$};
\node [above] at (2,0) {$e_z$};
\node [below] at (3,0) {$z$};
\node [left] at (-0.25,0.5) {$uv$};
\node [above] at (-0.8,-0.4) {$uv$};
\node [above] at (3.5,0.25) {$uv$};
\node [above] at (4,-0.5) {$uv$};
\draw (3,0) -- (4,-0.5);
\draw (3,0) -- (4,0.5);
\end{tikzpicture} 
\end{center}
and leave everything else untouched. Note that the section $Q'$ of $\xi$ specializes onto the new vertex $y$. Let $x$ be the vertex of $\tilde{G}_e$ such that the section $\tilde{P}'$ of $\xi$ specializes onto $x$.

Let $a, b$ be two formal variables and let $\tilde{G}^{a,b}_e$ be the labelled graph obtained from $\tilde{G}_e$ by replacing the label $u$ by $a$, the label $v$ by $b$, and each label $uv$ by $a+b$. 
Let $r(x,y)(a,b) \in \qq(a,b)$ be the effective resistance between the vertices $x$ and $y$ on $\tilde{G}^{a,b}_e$. Then $r(x,y)(a,b)$ is a rational homogeneous function of weight one in $a, b$. Let $r(p,q) \in \qq$  denote the effective resistance between vertices $p, q$ of $G$. Then we have that $r(x,y)(a,0)=r(x,z)a$ and $r(x,y)(0,b)=r(x,w)b$. Imitating the derivation of formula (\ref{explicitreff}) we find the explicit formula
\begin{equation} \label{explicitresistance} r(x,y)(a,b) = r(x,z)a + r(x,w)b + F(e)\frac{ab}{a+b}  
\end{equation}
for $r(x,y)(a,b)$. From the construction of $\beta \colon \Zbar' \to \Zbar$ we deduce that $\delta_P^*\bb$ coincides with the restriction to $X$ of the line bundle $\pair{\oo(\tilde{P}'-Q'),\oo(\tilde{P}'-Q')}^{\otimes -1}$ on $\Xbar$. As we are only interested in $d_3$, from (\ref{expandlogs}) it follows that we may as well replace $s$ by another local section of $\delta_P^*\bb$. Using again \cite[Proposition 9.1.11]{liu} we change  the rational section $s$ in such a way that  $s$, when seen as a rational section of the line bundle $\pair{\oo(\tilde{P}'-Q'),\oo(\tilde{P}'-Q')}^{\otimes -1}$ on $\Xbar$, is generating on $U_{e,\eps}$.  

Recall that on $U_{e,\eps} \setminus \pi^{-1}D$, the function $\tau(u,v,t_2,\ldots,t_n)$ is bounded. 
From (\ref{expandlogs}) we then obtain that $\rho(u,v) = d_1 \log|u| + d_2 \log|v| + d_3 \frac{ \log|u|\log|v|}{\log|t|}  $ can be characterized as the unique rational linear combination of $\log|u|$, $\log|v|$ and $\log|u|\log|v|/\log|t|$ such that on $U_{e,\eps}\setminus \pi^{-1}D$ an equality
\begin{equation} \label{plusbounded} \log \|s\|(u,v) = \rho(u,v) + \, \textrm{bounded function} 
\end{equation}
holds. For each pair of positive integers $m, n$, let $\bar{f}_{m,n} \colon \Delta_\eps \to U_{e,\eps} \subset \Xbar$ be a test curve given by sending $q \mapsto (q^m,q^n)$. 
We see that for each $m, n$ the estimate
\[ \left(\bar{f}_{m,n}^*\log\|s\|\right)(q) = \rho(q^m,q^n) + O(1)
= (d_1m + d_2n + d_3 \frac{mn}{m+n})\log|q| + O(1) \]
holds. On the other hand, combining Propositions  \ref{learchar} and \ref{Leardelta} it is not difficult to see that
\[ \left(\bar{f}_{m,n}^*\log\|s\|\right)(q) = -r(x,y)(m,n) \log|q| + O(1) \]
holds. We deduce that for each pair of positive integers $m, n$ the identity
\[ d_1m + d_2n + d_3 \frac{mn}{m+n} = -r(x,y)(m,n) \]
holds. Combining with (\ref{explicitresistance}) we obtain $d_3=-F(e)$, which is what we wanted. 
\end{proof}
\begin{remark} Let $\pi \colon \Xbar \to \Delta$ with $\Xbar$ smooth be a nodal curve over the unit disc, assumed to be smooth over $\Delta^*$. Write $X =\pi^{-1}\Delta^*$. The above proof shows that the family of Arakelov measures $\mu_{\Ar,t}$ on the fibers $X_t$ has a natural limit measure $\mu_0$ on the special fiber $\Xbar_0$. The measure $\mu_0$ can be described as follows: let $\nu \colon \tilde{\Xbar}_0 \to \Xbar_0$ be the normalization of $\Xbar_0$, and let $M$ be the measure on $\tilde{\Xbar}_0$ which on a connected component $x$ is $q(x)$ times the Arakelov volume form on $x$ if $x$ has positive geometric genus $q(x)$, and the zero measure if $x$ has genus zero. For each node $e$ of $\Xbar_0$ let $F(e)$ be the associated Foster coefficient. From (\ref{c1explicit}) and the identity $d_3=-F(e)$ we deduce that the area of the ``collar'' around the node $e$ with respect to $c_1(\delta_P^*\bb)$ tends to $2F(e)$ as $t \to 0$. We find that 
\[ \mu_0 = \frac{1}{h} \left( \nu_*M + \sum_{e \in \E(G)} F(e) \,\delta_e \right) \, . \]
Let $G$ be the dual graph of $\Xbar_0$ and $(\Gamma,K)$ the associated polarized metrized graph. Then we note that to $\mu_0$ naturally corresponds the measure
\[ \frac{1}{h} \left( \sum_{x \in V(G)} q(x)\,\delta_x + \sum_{e \in \E(G)} F(e)\, \d y(e) \right) \]
on $\Gamma$. Not surprisingly, this is precisely Zhang's admissible measure $\mu$ as defined in (\ref{measure}). Indeed, note that $K(x)=v(x)-2+2\,q(x)$ for all $x \in \V(G)$.
\end{remark}
\begin{proof}[Proof of Theorem \ref{leardeltasq}] 
We first show that $\pair{\delta_P^*\bb,\delta_P^*\bb}$ has a Lear extension over $\Ybar$. It suffices to consider the case that $\Ybar = \Delta^n$ and $D \subset \Ybar$ is the smooth irreducible divisor given by $t_1=0$. The truth value of the statement is unaffected by modifications of the total space $\Xbar$ in the inverse image of $D$. By \cite[Lemma 3.2]{alt} there exists a projective modification $\Xbar' \to \Xbar$ such that the composed morphism $\Xbar' \to \Ybar$ is a nodal curve, smooth over $\Ybar \setminus D$, and with $\Sing(\Xbar')$ either empty or of codimension at least three in $\Xbar'$. The codimension three condition implies that $D$ is reducible (cf. Remark 3.5 of \cite{alt}), so, as  $D$ is irreducible, we conclude that we can reduce to the case that $\Xbar$ is smooth. Then the statement follows from Proposition \ref{maintechnical}. 

We turn next to the proof of the formula
\[ \overline{\pair{\delta_P^*\bb,\delta_P^*\bb}} = 4h \psi + \kappa_1 - 
\left(4hg_\mu(x,x) + \varepsilon \right) [0]  \]
in the case that $\Ybar$ is the unit disc, and $P$ passes through $\Sm(\pi)$. Also in this case we can reduce to the case that $\Xbar$ is smooth. Indeed, upon passing to the minimal desingularization $\tilde{\Xbar}\to \Xbar$ of $\Xbar$, the left hand side of the equality does not change, and neither does the right hand side: the line bundles $\psi$ and $\kappa_1$ remain unchanged, and by Proposition \ref{minimaldesing} the expression $4hg_\mu(x,x) + \varepsilon$ remains unchanged as well. In the case that $\Xbar$ is itself smooth, the claimed formula follows upon combining the formulae in Propositions \ref{selfdelignepairingLear} and \ref{maintechnical}.
\end{proof}

\section{Proof of Theorems \ref{main_first} and \ref{main_second}} \label{proofmainresult}

We can now give the proof of our main results, Theorems \ref{main_first} and \ref{main_second}. Localizing in the analytic topology we may assume from the outset that two sections $P$, $Q$ of $\Xbar \to \Ybar$ are given.
Proposition \ref{kappalear} and Theorem \ref{leardeltasq} imply that the Lear extensions of $\kappa^*\bb$ and of $\pair{\delta_P^*\bb,\delta_P^*\bb}$ exist. From Proposition \ref{omega_in_terms_of_biext}
we then deduce that the Lear extension of $\pair{\opar,\omar}$ exists, and that
\begin{equation} \label{keyformula} 4h^2 \overline{\pair{\opar,\omar}} = \overline{\pair{\delta_P^*\bb,\delta_P^*\bb}} + \overline{\kappa^* \bb} \, .   
\end{equation}
Invoking Proposition \ref{deltasq} we next obtain that the Lear extension of $\omsq$ exists, and that
\begin{equation} \label{learomsq} \overline{\omsq} = \overline{\pair{\delta_P^*\bb,\delta_P^*\bb}} - 4h   \overline{\pair{\opar,\omar} } \, . 
\end{equation}
Proposition \ref{Leardelta} states that the Lear extension of $\delta^*\bb$ exists.  Combining this with Proposition \ref{diagonal_metric} and the existence of $\overline{\pair{\opar,\omar}}$ and $\overline{\pair{\oo(Q)_\Ar,\omar}}$ established above gives that the Lear extension of $ \pair{\opar,\oo(Q)_\Ar}$ exists. More precisely we find the equality
\begin{equation} \label{Leardiagonal} 2 \overline{\pair{\opar,\oo(Q)_\Ar}} = \overline{\delta^*\bb} - \overline{\pair{\opar,\omar} } - \overline{\pair{\oo(Q)_\Ar,\omar} }    
\end{equation}
on $\Ybar$. This settles Theorem \ref{main_first}. 

Specializing to the case when the base is the unit disc $\Delta$, we have the following more precise formulae. From now on we assume that the sections $P, Q$ pass through $\Sm(\pi)$. Let $G$ be the dual graph of $\Xbar_0$, and let $(\Gamma,K)$ be the associated polarized metrized graph. Let $g_\mu$ be the admissible Green's function of $(\Gamma,K)$ and let $\bar{g}$ be the Green's function on $G$ introduced in Section \ref{prelimgraphs} using the pseudo-inverse of the discrete Laplacian on $G$. Then for any two divisors $\ee, \ff$ of degree zero on $G$ we obtain from (\ref{greenandresistance}) and (\ref{gZhandRes}) the identities 
\begin{equation} \label{nice}  \bar{g}(\ee,\ff) = -\frac{1}{2}r(\ee,\ff)  = g_\mu(\ee,\ff) \, . 
\end{equation}
Let $\vareps$ be the epsilon-invariant of $G$. Let $x, y \in V(G)$ be the components of $\Xbar_0$ to which $P, Q$ specialize. From Proposition \ref{kappalear} combined with (\ref{nice}) we obtain that
\begin{equation} \label{recallkappalear} \overline{\kappa^*\bb} = 4h(h-1)\psi-\kappa_1-g_\mu((2h-2)x-K,(2h-2)x-K)[0] \, , 
\end{equation}
and from Theorem \ref{leardeltasq} we recall that
\begin{equation} \label{recalldeltasq} \overline{\pair{\delta_P^*\bb,\delta_P^*\bb}} = 4h \psi + \kappa_1 - 
\left(4h\,g_\mu(x,x) + \varepsilon \right) [0] \, .  
\end{equation}
Together with (\ref{keyformula}) we then find
\begin{equation} \label{fourhsq} \begin{split}
4h^2 \overline{\pair{\opar,\omar}} & = \overline{\pair{\delta_P^*\bb,\delta_P^*\bb}} + \overline{\kappa^* \bb}  \\ & = 4h^2 \,\psi - (4hg_\mu(x,x) + \varepsilon + g_\mu((2h-2)x-K,(2h-2)x-K))[0] \, .  \end{split}  
\end{equation}
From Proposition \ref{epsalt} we recall that
\[ \varepsilon = 4(h-1)(g_\mu(x,x)+g_\mu(K,x)) - g_\mu(K,K) \, . \]
We obtain from this that
\[ 4h\,g_\mu(x,x) + \varepsilon + g_\mu((2h-2)x-K,(2h-2)x-K) = 4h^2 \,g_\mu(x,x) \, . \]
Combining with (\ref{fourhsq}) we find that
\begin{equation} \label{eqn:omsq}
 \overline{\pair{\opar,\omar}} = \psi - g_\mu(x,x)[0] \, , 
\end{equation} 
which settles part (b) of Theorem \ref{main_second}. Combining (\ref{learomsq}), (\ref{recalldeltasq}) and (\ref{eqn:omsq}) we then obtain
\[ \overline{\omsq} = \kappa_1 -  \varepsilon [0]   \]
which settles part (a).
Write $\psi_1=\pair{\oo(P),\omega}$ and $\psi_2=\pair{\oo(Q),\omega}$.
From Proposition \ref{Leardelta} we obtain that
\begin{equation} \label{recallLeardelta}
\overline{\delta^*\bb} = 2 \pair{\oo(P),\oo(Q)}+\psi_1 + \psi_2  - g_\mu(x-y,x-y)[0] \, .
\end{equation}
Combining (\ref{Leardiagonal}), (\ref{eqn:omsq}) and (\ref{recallLeardelta}) gives that
\[ \overline{\pair{\opar,\oo(Q)_\Ar}} = \pair{\oo(P),\oo(Q)} + g_\mu(x,y)[0] \, , \]
which settles part (c). This finishes the proof of Theorem \ref{main_second}.

\section{Zhang's admissible Deligne pairing} \label{sec:admpairing}

The identities in Theorem \ref{main_second} can be conveniently rephrased in terms of Zhang's admissible Deligne pairing from \cite{zh}. The purpose of this section is to explain this.  Let $\pi \colon \Xbar \to \Delta$ be a nodal curve of positive genus, such that $\pi$ is smooth over $\Delta^*$. Let $\omega$ be the relative dualizing sheaf of $\pi$. Let $G$ be the dual graph of the special fiber $\Xbar_0$, and let $\Gamma$ be the associated metrized graph, with vertex set $V=V(G)$. We let $C(\Gamma)$ and $\Delta$ be the function space and Laplacian on $\Gamma$ as in Section \ref{harmonic}. 

A \emph{compactified divisor} on $\Gamma$ is to be any element of $\rr^{V} \oplus C(\Gamma)$. If $D+f$ and $E+g$ are compactified divisors on $\Gamma$, one defines their intersection product to be the real number
\[ (D+f,E+g) = g(D) + f(E) - \int_\Gamma g \, \Delta \, f  \, . \]
The intersection product on $\rr^{V} \oplus C(\Gamma)$ is symmetric and bilinear. The \emph{curvature form} of a compactified divisor $D+f$ is to be the current $\delta_D - \Delta f$ in $C(\Gamma)^*$.

Let $P(\Xbar)$ be the (additively written) group of line bundles on $\Xbar$. We recall from Section \ref{learI} that we have a canonical specialization map $R \colon P(\Xbar) \to \rr^V$. We put $K=R(\omega)$. We note that $\deg K=2h-2$, where $h>0$ is the genus of $\pi$.
An element of the group $P(\Xbar) \oplus C(\Gamma)$ is called a \emph{compactified line bundle} on $\Xbar$. The curvature form of a compactified line bundle $L+f$ is by definition the curvature form of the compactified divisor $R(L)+f$.
 For compactified line bundles $L+f$ and $M+g$ we let $\pair{L+f,M+g}$ be the line bundle on $\Delta$ given by 
\[ \pair{L+f,M+g} = \pair{L,M} + (R(L)+f,R(M)+g)[0] \, , \] 
where $\pair{L,M}$ is the usual Deligne pairing of the ordinary line bundles $L$ and $M$.

Let $g_\mu$ be Zhang's admissible Green's function on $(\Gamma,K)$. 
An \emph{admissible line bundle} on $\Xbar$ is by definition a compactified line bundle on $\Xbar$ of the shape $L + c + g_\mu(R(L),\textit{-})$ for some $c \in \rr$ and $L \in P(\Xbar)$. The terminology ``admissible'' is explained by the fact that among the compactified line bundles, the admissible ones are precisely those whose curvature form is a multiple of Zhang's admissible measure $\mu$ (cf. Section \ref{sec:arakmetric}). 

Let $P \colon \Delta \to \Xbar$ be a section of $\pi$ with image contained in the smooth locus of $\pi$. Then one has the special admissible line bundles 
\[ \oo(P)_a = \oo(P) + g_\mu(R(P),\textit{-}) \quad \textrm{and} \quad
\omega_a = \omega + c(\Gamma,K) + g_\mu(K,\textit{-}) = \omega - g_\mu(\textit{-},\textit{-}) \]
on $\Xbar$, where $c(\Gamma,K) \in \rr$ is as in Proposition \ref{defc}.

It follows easily from the above that for each pair of sections $P, Q$ of $\Xbar \to \Delta$ with image contained in $\Sm(\pi)$ we have
\begin{equation} \label{Iisom}
\pair{\oo(P)_a,\oo(Q)_a} = \pair{\oo(P),\oo(Q)} + g_\mu(x,y)[0] \, , 
\end{equation}
where $x=R(P)$, $y=R(Q)$. Let $\vareps$ denote the epsilon-invariant of $(G,K)$. By \cite[Theorem 4.2]{zh} resp.\ \cite[Theorem 4.4]{zh} there exist canonical isomorphisms
\begin{equation} \label{IIisom}
\pair{ \oo(P)_a,\omega_a(P_a)} \isom \oo \quad \textrm{and} \quad
\pair{ \omega_a, \omega_a } \isom \pair{\omega,\omega} - \vareps[0]
\end{equation}
of line bundles on $\Delta$. 

The above set-up for a generically smooth nodal curve over a disc extends in a straightforward manner to the setting of a generically smooth nodal curve $\Xbar \to \Ybar$ over a smooth complex algebraic curve $\Ybar$. We omit the details. In this global setting, one readily sees, using the local equalities (\ref{Iisom}) and (\ref{IIisom}), that Theorem~\ref{main_second} implies the following result phrased in terms of the admissible Deligne pairing.
\begin{thm}  \label{lear=adm}
Let $\pi \colon \Xbar \to \Ybar$ be a nodal curve of positive genus over a complex algebraic curve $\Ybar$. Assume that $\pi$ is smooth over the dense open subset $Y \subset \Ybar$. Write $X=\pi^{-1}Y$ and denote by $\omar$ the line bundle $\omega$ on $X$ equipped with the Arakelov metric. Let $\omega_a$ denote Zhang's admissible relative dualizing sheaf of $\pi$. Then the equality
\[ \overline{\pair{\omar,\omar}} = \pair{\omega_\a,\omega_\a}  \]
holds. Suppose that $\pi$ has a section $P$, resp.\ two sections $P, Q$, with image contained in the smooth locus $\Sm(\pi)$. Denote by $\opar$ and $\oqar$ the line bundles $\oo(P)$ and $\oo(Q)$ on $X$ equipped with the Arakelov metric. Then we further have the equalities
\[ \overline{\pair{ \opar, \omar }} = \pair{\oo(P)_\a,\omega_\a} \quad \textrm{and} \quad
 \overline{\pair{\opar,\oqar}} = \pair{\oo(P)_\a,\oo(Q)_\a} \, .
\]
Here $\oo(P)_\a$ and $\oo(Q)_\a$ denote the canonical admissible line bundles on $\Xbar$ associated to the sections $P$ resp.\ $Q$.
\end{thm}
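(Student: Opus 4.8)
The plan is to deduce Theorem~\ref{lear=adm} from Theorem~\ref{main_second} by a purely local argument at the points of $D=\Ybar\setminus Y$, together with the already-recorded identities (\ref{Iisom}) and (\ref{IIisom}) which identify the admissible Deligne pairings with the required $\qq$-line bundles.

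First I would reduce to the local case. Since $\Ybar$ is a smooth curve, $D$ is a discrete set of points with $D^{\sing}=\emptyset$, so by Proposition~\ref{learchar} the Lear extension of a continuous hermitian line bundle over $\Ybar$ is uniquely determined as a $\qq$-line bundle, and both its existence and its explicit form are local questions at each $p\in D$. Away from $D$, all the objects in sight — the Lear extensions on the left and the admissible Deligne pairings on the right — restrict to the ordinary Deligne pairing of the corresponding algebraic line bundles on $X$, which is globally defined on $\Ybar$; and the admissible corrections entering $\pair{\omega_a,\omega_a}$, $\pair{\oo(P)_a,\omega_a}$ and $\pair{\oo(P)_a,\oo(Q)_a}$ are, by construction, supported on $D$. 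Hence it suffices, for each $p\in D$, to pass to a small coordinate disc $\Delta\subset\Ybar$ centred at $p$ — so that $\Xbar\to\Ybar$ restricts to a nodal curve over $\Delta$ smooth over $\Delta^*$, with dual graph $G$ of $\Xbar_p$, and the sections $P,Q$ (when present) specialising to vertices $x,y$ of $G$ — and to check the three displayed equalities over $\Delta$, with $[0]$ now denoting the point $p$.

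Over such a disc the left-hand sides are computed precisely by Theorem~\ref{main_second}: $\overline{\pair{\omar,\omar}}=\kappa_1-\vareps[0]$, $\overline{\pair{\opar,\omar}}=\psi-g_\mu(x,x)[0]$ and $\overline{\pair{\opar,\oqar}}=\pair{\oo(P),\oo(Q)}+g_\mu(x,y)[0]$, where $\kappa_1=\pair{\omega,\omega}$, $\psi=\pair{\oo(P),\omega}$ and $g_\mu$ is the admissible Green's function of $(\Gamma,K)$. For the right-hand sides I would unwind the definition $\pair{L+f,M+g}=\pair{L,M}+(R(L)+f,R(M)+g)[0]$ of the admissible Deligne pairing. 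Since $P,Q$ pass through $\Sm(\pi)$ one has $R(\oo(P))=x$ and $R(\oo(Q))=y$ as divisors on $G$, and a short computation with the intersection product on compactified divisors, using $\Delta_y g_\mu(x,y)=\delta_x(y)-\mu(y)$ and $\int_\Gamma g_\mu(x,y)\,\mu(y)=0$ from (\ref{ZhArGr}) together with Proposition~\ref{defc} to eliminate $c(\Gamma,K)$, identifies $\pair{\oo(P)_a,\oo(Q)_a}$ with $\pair{\oo(P),\oo(Q)}+g_\mu(x,y)[0]$; this is exactly (\ref{Iisom}), and it matches Theorem~\ref{main_second}(c). Taking $Q=P$ and combining with the adjunction isomorphism (\ref{canonicaladj}) and the first isomorphism in (\ref{IIisom}) yields $\pair{\oo(P)_a,\omega_a}=-\pair{\oo(P)_a,\oo(P)_a}=\psi-g_\mu(x,x)[0]$, matching Theorem~\ref{main_second}(b); and the second isomorphism in (\ref{IIisom}), i.e.\ \cite[Theorem~4.4]{zh}, gives $\pair{\omega_a,\omega_a}=\pair{\omega,\omega}-\vareps[0]=\kappa_1-\vareps[0]$, matching Theorem~\ref{main_second}(a). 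So each right-hand side agrees with the corresponding left-hand side over every disc $\Delta$ around a point of $D$, and gluing over the finitely many points of $D$ gives the three equalities over $\Ybar$.

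I do not expect a serious obstacle here: all the analytic content is already packaged in Theorem~\ref{main_second}, and the combinatorial content is packaged in (\ref{Iisom})–(\ref{IIisom}) (equivalently, in Zhang's Theorems~4.2 and~4.4). The only point requiring mild care is bookkeeping — verifying $R(\oo(P))=x$ for a section through the smooth locus, confirming that the admissible correction divisors in the global picture are genuinely concentrated on $D$, and checking that "the Lear extension over $\Ybar$" is correctly matched, disc by disc, with the Lear extension over each $\Delta_p$ supplied by Theorem~\ref{main_second} — but none of this goes beyond the local nature of both the Lear extension and the admissible Deligne pairing.
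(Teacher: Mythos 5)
Your proposal is correct and follows essentially the same route as the paper, which likewise deduces the theorem by localizing at the finitely many boundary points and combining the local formulae of Theorem~\ref{main_second} with the identities (\ref{Iisom}) and (\ref{IIisom}) for the admissible Deligne pairings. One cosmetic remark: Proposition~\ref{defc} is not actually needed for (\ref{Iisom}) itself, since $c(\Gamma,K)$ does not enter $\oo(P)_\a$ or $\oo(Q)_\a$; your derivation of $\pair{\oo(P)_\a,\omega_\a}=\psi-g_\mu(x,x)[0]$ via the admissible adjunction in (\ref{IIisom}) is correct as written.
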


\section{Proofs of Theorems \ref{delta} and \ref{arakintro}} \label{sec:corollaries}

In this final section we derive Theorems \ref{delta} and \ref{arakintro} from Theorem~\ref{main_second}. 
\begin{proof}[Proof of Theorem \ref{delta}]
Let $(\omega_1,\ldots,\omega_h)_t$ be the family of normalized bases of the Hodge bundle $\pi_*\omega_{X/\Delta^*}$ determined by some symplectic frame of $R^1\pi_*\zz_X$. Write $\sigma=\omega_1 \wedge \ldots \wedge \omega_h$ and let $\lambda_H$ denote the determinant of $\pi_*\omega_{X/\Delta^*}$ over $\Delta^*$ equipped with the norm derived from the $L^2$-metric (\ref{defineinnerproduct}) on $\pi_*\omega_{X/\Delta^*}$. By equation (\ref{normhodge}) we have 
\[ \log \| \sigma \|_H = \frac{1}{2} \log \det \mathrm{Im} \,\Omega(t) \]
for all $t \in \Delta^*$. This equality together with the growth rate 
\[ \det \Im \Omega(t) \sim -c \log |t| \]
as $t \to 0$ that follows from the Nilpotent Orbit Theorem (cf. Proposition \ref{asymptperiod}) shows that $\sigma$ extends as a generating section of the canonical Mumford extension \cite{hi} of $\lambda_H$ over $\Delta$. Following \cite[p.~225]{fc} the Mumford extension of $\lambda_H$ is equal to $\lambda_1=\det \pi_* \omega$, and so the section $\sigma$ extends as a generating section of $\lambda_1$ over $\Delta$. Now the Mumford isomorphism (\ref{mumfordnorm}) gives a globally trivializing section $\mu$ of the hermitian line bundle $\lambda_{H}^{\otimes 12} \otimes \pair{\omar,\omar}^{\otimes -1}$ over $\Delta^*$. We recall that by definition the Faltings delta-invariant satisfies
\[ \delta_F=-\log\|\mu\|  \, . \]
Let $\tau$ be a local generating section of $\kappa_1=\pair{\omega,\omega}$ around $0$. Then
from part (a) in Theorem \ref{main_second} combined with Proposition \ref{learchar} we obtain the asymptotics 
\[ \log \|\tau\| \sim - \varepsilon \log |t| \]
as $t \to 0$. Let $\delta$ be the volume of the weighted dual graph of $\Xbar_0$. As $\Xbar \to \Delta$ is assumed to be semistable, we obtain from (\ref{mumfordnorm}) that the section $\mu$ extends as a global trivializing section of the line bundle $12\lambda_1 - \kappa_1 - \delta [0]$ over $\Delta$. We derive from this that $t^{-\delta} \mu$ is a local generating section of $12 \lambda_1 - \kappa_1$. Hence $t^{-\delta} \mu$ differs by a holomorphic unit from the local generating section 
$\sigma^{\otimes 12} \otimes \tau^{\otimes -1}$ of $12\lambda_1-\kappa_1$. We see that
\[ \begin{split}
\delta \log|t| + \delta_F & = -\log\|t^{-\delta}\mu\| \sim -12 \log\|\sigma\|_H + \log \|\tau\| \\
 & \sim -6 \log \det \mathrm{Im} \,\Omega(t) - \vareps \log|t|
 \, , \end{split} \]
and Theorem \ref{delta} follows.
\end{proof}
\begin{proof}[Proof of Theorem \ref{arakintro}]
Part (b) of Theorem \ref{main_second} states that
\[ \overline{\pair{ \opar, \omega_\Ar }} = \psi - g_\mu(x,x)[0] \, . \]
By assumption we have that $P^* \d z$ is a local generating section of the line  bundle $\psi=P^*\omega=\pair{\oo(P),\omega}$ around the origin. We recall that we have a canonical isometry $P^*\omar \isom \pair{\opar,\omar}$ over $\Delta^*$. Applying Proposition \ref{learchar} we then find the asymptotics
\[ \log \| P^*\d z \|_{\Ar,t} \sim -g_\mu(x,x)\log|t|  \]
as $t \to 0$. This gives part (a) of Theorem \ref{arakintro}.  Part (c) of Theorem \ref{main_second} states that
\[ \overline{\pair{\opar,\oqar}} = \pair{\oo(P),\oo(Q)} + g_\mu(x,y)[0] \, . \] 
As by assumption $P, Q$ have empty intersection above the origin, we have that $\pair{1_P,1_Q}$ is a local generating section of the Deligne pairing $\pair{\oo(P),\oo(Q)}$ around the origin. Applying again Proposition \ref{learchar} we obtain the asymptotics
\[ g_{\Ar,t}(P,Q) = \log \| \pair{1_P,1_Q}\|_t \sim  g_\mu(x,y) \log |t|
\]
as $t \to 0$, and this yields part (b) of Theorem \ref{arakintro}. 
\end{proof}

\vspace{0.5cm}

\noindent Address of the author:\\ \\
Mathematical Institute  \\
Leiden University \\
PO Box 9512  \\
2300 RA Leiden  \\
The Netherlands  \\ \\
Email: \verb+rdejong@math.leidenuniv.nl+

\end{document}